\definecolor{mygray}{gray}{0.85}
\renewcommand{\leq}{\leqslant}
\newcommand\myrestriction{\mathord\restriction}
\def\mr#1{\myrestriction_{#1}}
\newcommand{\Mscr}{\mathcal M}
\newcommand{\Tscr}{\mathcal T}
\DeclareMathOperator{\pred}{pred}
\def\abar{\mbox{\boldmath $a$}}
\def\bbar{{\bf b}}
\def\cbar{{\bf c}}
\def\dbar{{\bf d}}
\def\ebar{{\bf e}}
\def\hbar{{\bf h}}
\def\mbar{{\bf m}}
\def\ubar{{\bf u}}
\def\vbar{{\bf v}}
\def\wbar{{\bf w}}
\def\xbar{{\bf x}}
\def\ybar{{\bf y}}
\def\zbar{{\bf z}}
\def\bb{{\bf \underaccent{\tilde}{b}}}
\def\VV{{\mathbb V}}
\def\forces{\Vdash}
\def\At{\rm{At}}
\newcommand{\NN}{\underaccent{\tilde}{N}}
\def\dom{{\rm dom}}
\def\tp{{\rm tp}}
\def\pcl{{\rm pcl}}
\def\<{\langle}
\def\>{\rangle}
\def\myfnt{\ifx\protect\@typeset@protect\expandafter\footnote\else\expandafter\@gobble\fi}
\newtheorem{theorem}{Theorem}[section]
\newtheorem{corollary}[theorem]{Corollary}
\newtheorem{definition}[theorem]{Definition}
\newtheorem{lemma}[theorem]{Lemma}
\newtheorem{claim}[theorem]{Claim}
\newtheorem{proposition}[theorem]{Proposition}
\newtheorem{fact}[theorem]{Fact}
\newtheorem{remark}[theorem]{Remark}
\newtheorem{notation}[theorem]{Notation}
\newtheorem{definition/fact}[theorem]{Definition/Fact}
\def\bK{\mbox{\boldmath $K$}}
\newcommand{\pureindep}[1][]{%
  \mathrel{
    \mathop{
      \vcenter{
        \hbox{\oalign{\noalign{\kern-.3ex}\hfil$\vert$\hfil\cr
              \noalign{\kern-.7ex}
              $\smile$\cr\noalign{\kern-.3ex}}}
      }
    }\displaylimits_{#1}
  }
}
\newcommand{\indep}[2]{%
  \mathrel{
    \mathop{
      \vcenter{
        \hbox{%
\oalign{
\noalign{\kern-.3ex}\hfil$\vert$\hfil\cr
              \noalign{\kern-.7ex}
              $\smile$\cr\noalign{\kern-.3ex}
}
}
      }
}^{\!\!\!\!\!#2}_{\!\!\hspace{-0.1em}#1}
  }
}
\def\Mbar{{\overline{M}}}
\def\Q{{\mathbb Q}}
\def\P{{\mathbb P}}
\def\U{{\cal U}}
\def\V{{\mathbb V}}
\begin{document}
%\begin{abstract}  For an $\aleph_1$-categorical atomic class, we clarify the space of types over the unique model of size $\aleph_1$.
%Using these results, we prove that if such a class has a model of size $\beth_1^+$ then it is $\omega$-stable.
%\end{abstract}

\title{When does $\aleph_1$-categoricity imply $\omega$-stability?
\thanks{All authors are
  grateful to an individual who prefers to remain anonymous' for providing typing services
 that were used during the work on the paper.  {\bf The second author is grateful to Danielle Ulrich for helpful discussions.}
  This is paper  1244 in the Shelah Archive.}}

\author{John T. Baldwin\thanks{Research partially supported by Simons
travel grant G3535}
\\University of Illinois at Chicago\\
\and Michael C. Laskowski\thanks{Partially supported by NSF grant
DMS-2154101.}\\University of Maryland\\
\and Saharon Shelah\thanks{The  third  author would like to thank the NSF and
BSF for partially supporting this research through  grant  NSF 2051825/ BSF
3013005232 with M. Malliaris. References like [Sh:950, Th0.2=Ly5] mean that
the internal label of Th0.2 is y5 in Sh:950. The reader should note that the
version in my website is usually more up-to-date than the one in arXiv. This
is publication number   1244 }}

  \maketitle

  \begin{abstract}  For an $\aleph_1$-categorical atomic class, we clarify the space of types over the unique model of size $\aleph_1$.
Using these results, we prove that if such a class has a model of size $\beth_1^+$ then it is $\omega$-stable.
\end{abstract}

  \section{Introduction}
  Our principal result is
  \begin{theorem}  \label{largemodeldisp}  If an atomic class $\At$ is $\aleph_1$-categorical
and has a model of size $(2^{\aleph_0})^+$, then $\At$ is $\omega$-stable.
\end{theorem}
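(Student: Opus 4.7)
The plan is to proceed by contradiction: suppose that $\At$ is $\aleph_1$-categorical with unique $\aleph_1$-sized model $M_1$, contains an atomic model $M^*$ of size $(2^{\aleph_0})^+$, yet is not $\omega$-stable. Failure of $\omega$-stability yields a countable atomic $M_0\in\At$ with $2^{\aleph_0}$ pairwise distinct complete atomic types realized in atomic extensions of $M_0$. By the usual Shelah splitting argument I would refine these into a perfect tree $\langle p_\eta : \eta\in 2^{<\omega}\rangle$ of complete atomic types over a countable elementary chain $\langle M_\eta : \eta\in 2^{<\omega}\rangle$ inside $\At$, so that the branches $\eta\in 2^\omega$ index $2^{\aleph_0}$ pairwise contradictory complete types over the countable union.

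Next I would apply the analysis of the type space over $M_1$ that the abstract alludes to. The point is that, combined with $\aleph_1$-categoricity, the description of types over $M_1$ controls how realizations of our tree of types may sit inside any model of $\At$. Embedding $M_1\preceq M^*$ and using $|M^*|=(2^{\aleph_0})^+ > 2^{\aleph_0}$, a pigeonhole applied to which branch of the tree each element of $M^*\setminus M_1$ realizes should yield an independent family of $(2^{\aleph_0})^+$ realizations of the splitting tree sitting inside $M^*$. This is the crucial place where the hypothesis on the size of the model is used, and where the earlier type-space description supplies the combinatorial scaffolding.

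From such a family I would construct, for every $S\subseteq\omega_1$, a distinct atomic model $N_S\in\At$ of size $\aleph_1$, by a continuous elementary chain of countable atomic submodels of $M^*$ in which the choice of which branch of the splitting tree to realize at stage $\alpha$ is dictated by whether $\alpha\in S$. Because each $N_S$ is a union of atomic submodels of $M^*$, atomicity is inherited; because distinct $S$ force distinct isomorphism types (readable off the realized pattern via the type-space description applied to each $N_S$), this produces $2^{\aleph_1}$ pairwise non-isomorphic atomic models of size $\aleph_1$, contradicting $\aleph_1$-categoricity.

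The main obstacle is the middle step: converting the cardinality inequality $|M^*|>2^{\aleph_0}$ into a genuinely independent realization pattern of the splitting tree inside $M^*$, and certifying that the countable submodel chosen at each stage of the $N_S$ construction can realize the prescribed branch while remaining atomic. This is precisely where the paper's clarification of types over $M_1$ is expected to do the real work; once that machinery supplies the requisite amalgamation inside $M^*$, the construction of $2^{\aleph_1}$ non-isomorphic $\aleph_1$-sized atomic models, and hence the desired contradiction, should follow by a standard diagonalization.
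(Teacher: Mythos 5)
There is a genuine gap, and it sits exactly where you flag ``the main obstacle.'' Your plan is, at bottom, an attempt to pass directly from non-$\omega$-stability (a perfect tree of types over a countable model) to $2^{\aleph_1}$ pairwise non-isomorphic models of size $\aleph_1$. That implication is only known under the weak continuum hypothesis $2^{\aleph_0}<2^{\aleph_1}$ (this is the Keisler--Shelah argument the paper cites inside the proof of Theorem~\ref{extra}); when $2^{\aleph_0}=2^{\aleph_1}$ it is precisely the open difficulty the paper is engineered to avoid. Concretely: your coding of subsets $S\subseteq\omega_1$ by ``which branch is realized at stage $\alpha$'' gives no way to recover $S$ from the isomorphism type of $N_S$, because a single model of size $\aleph_1$ can realize continuum many types over each countable submodel. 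The paper's own counting argument (Lemma~\ref{prev2}) that $|R_N|\le\aleph_1$ works only for \emph{constrained} types, which are determined by finitely many parameters; for the unconstrained types in your splitting tree no such bound holds, so the diagonalization does not produce non-isomorphic models. The intermediate ``pigeonhole'' step is also not doing what you hope: applying pigeonhole to $|M^*|>2^{\aleph_0}$ only yields one branch realized many times, not an independent family.

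The paper's actual use of the model of size $(2^{\aleph_0})^+$ is much more modest and is the \emph{only} place that hypothesis enters: a L\"owenheim--Skolem-style chain of length $\omega_1$ inside $M^{**}$ produces a proper, relatively $\aleph_1$-saturated pair $(M^*,M^{**})$. The real work is then done by Theorem~\ref{omegastbchar}, whose chain $(1)\Rightarrow(2)\Rightarrow(3)\Rightarrow(4)$ rests on two ingredients absent from your proposal: the countability of the set $C_M$ of constrained types under $\aleph_1$-categoricity (Theorem~\ref{CM}), and --- crucially --- Theorem~\ref{constrainedexists}, proved by a c.c.c.\ forcing together with Keisler's absoluteness for $L_{\omega_1,\omega}(Q)$, which supplies an $\aleph_1$-sized model in which every limit type is constrained. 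It is the comparison of that model with one carrying an unconstrained limit type that contradicts $\aleph_1$-categoricity. Without some substitute for this machinery, the middle step of your argument cannot be completed.
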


This result springs from several related problems in the study of
$L_{\omega_1,\omega}$: the role of $\beth_{\omega_1}$,  the possible
necessity of the weak continuum hypothesis, the absoluteness of
$\aleph_1$-categoricity.

For first order logic, Morley \cite{Morley65} proved, enroute to his
categoricity theorem, that an $\aleph_1$-categorical first order theory is
$\omega$-stable (n\'e totally transcendental). The existence of a saturated
Ehrenfeucht-Mostowski model of cardinality $\aleph_1$ that is generated by a
well-ordered set of indiscernibles is crucial to the proof.   The
construction of such indiscernibles via the Erd\H{o}s-Rado theorem and
Ehrenfeucht-Mostowski models is tied closely to the existence of `large'
(i.e. of size $\beth_{\omega_1}$) models for the theory.

%MESSED UP  IN THE MORNING
%the existence of a sufficiently large model arises from the failure of.

 The compactness of first order logic, yields  the full upward
L{\"o}wenheim-Skolem-Tarski (LST) theory for $L_{\omega,\omega}$: if $\psi$
has an infinite model it has arbitrarily large models. But for
$L_{\omega_1,\omega}$, the LST-theorem replaces `an infinite' by a model
 of size $\beth_{\omega_1}$. The proof proceeds by using iterations of the
 Erd\H{o}s-Rado theorem to find infinite sets of indiscernibles and to
 transfer size via Ehrenfeucht-Mostowski models.

 By an atomic class we mean the atomic models (Each finite sequence in each
model realizes a principal type over the empty set.) of a complete theory in
a countable first order language. Each sentence in $L_{\omega_1,\omega}$
defines such a class because Chang's theorem translates the sentence to a
first order theory omitting types and the language can be expanded to make
all realized types atomic \cite[Chapter 6]{Baldwincatmon}.

Shelah  calls an atomic class excellent if it satisfies an $n$-amalgamation
property for all $n$ and structures of arbitrary cardinality. He proved
\cite{Sh87a,sh87b} in ZFC:
 If an atomic class K is excellent and has an uncountable model then 1) it
has models of arbitrarily large cardinality; 2) if it is categorical in one
uncountable power it is categorical in all uncountable powers. He also
obtained a partial converse; under the very weak generalized continuum
hypothesis ($2^{\aleph_n} < 2^{\aleph_{n+1}}$ for $ n < \omega$): an atomic
class $\bK$ that has at least one uncountable model and is categorical in
$\aleph_n$ for each $ n < \omega$ is excellent. Thus the `Hanf number' for
existence  is reduced under VWGCH (and for categorical atomic classes) from
$\beth_{\omega_1}$ to $\aleph_{\omega}$.

This raises the question. Does an $\aleph_1$-categorical atomic class have
arbitrarily large models? Shelah \cite{Sh48} showed it has a model in
$\aleph_2$.
%{\color{red} For the authors,  work on this problem began with searching for
%counterexamples (in $L_{\omega_1,\omega}$ similar to Shelah's example in
%\cite[\S 6]{Sh88}, expounded as \cite[\S 17]{Baldwincatmon}) showing that
%under Martin's axiom there is a sentence in $L_{\omega_1,\omega}(Q)$ that is
%$\aleph_1$-categorical, has no model above the continuum and is not
%$\omega$-stable. Baldwin objected to the proof of a similar example for
%$L_{\omega_1,\omega}$ and Laskowski proved the putative example was not
%$\aleph_1$-categorical.}

 For the authors, work on this problem began by searching for
sentences of $L_{\omega_1,\omega}$ for which $\aleph_1$-categoricity can be
altered by forcing.\footnote{For sentences of $L_{\omega_1,\omega}(Q)$, such
sentences exist, see \cite[\S 6]{Sh88}, expounded as \cite[\S
17]{Baldwincatmon}.  A   non-$\omega$-stable sentence with no models above
the continuum is given,
  where $\aleph_1$-categoricity fails under CH but holds under
Martin's Axiom.}
% there is a sentence that is $\aleph_1$-categorical but not
%$\omega$-stable with no models above the continuum.}
The third author
proposed an example, but the first author objected to the proof and the
second author proved in ZFC that the putative example was not
$\aleph_1$-categorical.

In \cite{BLSmanymod} we introduced the appropriate notion of an algebraic
type for atomic classes, {\em pseudo-algebraic} (Definition~\ref{pclextdef})
and proved there that for an atomic class with $< 2^{\aleph_1}$  models in
$\aleph_1$ the {pseudo-algebraic} types were dense.  In
\cite{LaskowskiShelahstrfail} the conclusion is strengthened to `pcl-small',
and here (assuming a model in $\beth_1^+$) to $\omega$-stability.

The search for weakened conditions for $\omega$-stability is partially
motivated by asking whether the absoluteness of $\aleph_1$-categoricity for
first order logic (given by the equivalence to $\omega$-stable and no
two-cardinal model) extends to atomic classes. \cite{Baldwinabscat} proves
that either arbitrarily large models ($\beth_{\omega_1}$) or
$\omega$-stability sufficed for such an absolute characterization. Our main
theorem reduces the $\beth_{\omega_1}$ to $\beth_1^+$.

In Section~\ref{ideaoutline} we  investigate several notions of {\em
constrained}, investigate their relation to $\omega$-stability and
$\aleph_1$-categoricity, and $\omega$-stablity. The notion of a constrained
type is just a renaming; a type $p\in S(M)$ is constrained just if it does
not split over a finite subset.  Such a type is definable in the standard use in
model theory -- the existence of a schema such that for all $\mbar \in M$,
$\phi(\xbar,\mbar ) \in p \leftrightarrow d_\phi(\xbar,\mbar)$.
%We change the
%name to avoid unwarranted associations in the current context and to smooth
%an associated notion of {\em constrained model}. The new idea here is to
%refine the property of a model having types that are not constrained. A model
%$N$ is unconstrained if it admits an unconstrained limit type
%\ref{defconmod}.
In Section~\ref{sectwo1} we investigate many species of types over models and
see what happens under the assumption of $\aleph_1$-categoricity.
From this, we
prove the main theorem.  However, our results in Section~\ref{sectwo1} depend on
 a major hypothesis, the existence of an uncountable model in which every limit type is constrained.
  In
Section~\ref{secthree} we pay back our debt by proving
Theorem~\ref{constrainedexists}, we prove the existence of a model of size
$\aleph_1$ in which every limit type is constrained,
 using only the existence of an uncountable model.
Although the proof there uses forcing,  by appealing to the absoluteness
given by Keisler's model existence theorem for sentences of
$L_{\omega_1,\omega}(Q)$, the result is really a theorem of ZFC.

%\sidebar{discuss putative counterexample   showing necessity of model in
%$\beth_{\omega_1}$??}

  \section{Constrained types, $\aleph_1$-categoricity and
  $\omega$-stability}\label{ideaoutline}
  \numberwithin{theorem}{subsection} \setcounter{theorem}{0}

Throughout this article, $T$ will denote a complete theory in a countable
language for which there is an uncountable atomic model.   $\At$ denotes the class of atomic models
of $T$.  In everything that
follows, we only consider atomic sets, i.e., sets for which every finite tuple is isolated
by a complete formula.    Throughout, $M,N$ denote atomic models and $A,B$ atomic sets.
We write $\abar,\bbar$ for finite atomic tuples, and $\xbar,\ybar,\zbar$ denote finite tuples of variables.

We repeatedly use the fact that the countable atomic model $M$ is unique up
to isomorphism.  Vaught  \cite{Vaught} showed the existence of an uncountable
atomic model is equivalent to the countable atomic model having a proper
elementary extension. The only types we consider are either over an atomic
model or are over a finite subset of a model.  In either case, we only
consider types realized in atomic sets.

 For general background see
\cite{Baldwincatmon} and more specifically \cite{BLSmanymod}.

%Recall that atomicity implies that any complete type over a finite set is
%generated by a complete formula. We work with types realized in atomic sets.

\subsection{Constrained types and filtrations}

\begin{definition}\label{atomdef}\index{$S_{at}(A)$}\index{$S^*(A)$}
{\em Fix a countable complete theory $T$ with monster model $\Mscr$.
$At = At_T$
denotes the collection of atomic models of $T$.
\begin{enumerate}
\item  For $M \in At$, $S_{at}(M)$ is the collection of $p(\xbar) \in S(M)$ such
    that if $\abar\in \Mscr$ realizes $p$, $M\abar$ is an atomic set.
    \item $\At$ is {\em  $\omega$-stable} if for every/some countable $M\in\At$,
        $S_{at}(M)$ is countable.
        \end{enumerate}
        }
\end{definition}

%The space $S_{at}(M)$ is typically not compact.  However, if $M$ is countable, then $S_{at}(M)$
%is a $G_\delta$ subset of the full Stone space $S(M)$, and thus is a Polish space.  In particular,
%it is either countable or it contains a perfect set.
The reader is cautioned that the definition of $\omega$-stability
is not equivalent to the classical notion (i.e., $S(M)$ countable) but within the context of atomic sets,
this revised notion of $\omega$-stability plays an analogous role.
The spaces $S_{at}(M)$ are typically not compact.  However, if $M$ is countable, then $S_{at}(M)$
is a $G_\delta$ subset of the full Stone space $S(M)$, and thus is a Polish space.  In particular,
if $\At$ is not $\omega$-stable, then $S_{at}(M)$ contains a perfect set.

\begin{definition}
\begin{enumerate}
\item  {\em  A type $p\in S_{at}(M)$ {\em splits over $F\subseteq M$} if
    there are tuples $\bbar,\bbar'\subseteq M$ and a formula
    $\phi(\xbar,\ybar)$ such that $\tp(\bbar/F)=\tp(\bbar'/F)$, but
    $\phi(\xbar,\bbar)\wedge\neg\phi(\xbar,\bbar')\in p$.

\item We call $p\in S_{at}(M)$ {\em constrained} if $p$ does not split over
    some finite $F\subseteq M$ and {\em unconstrained} if $p$ splits over
    every finite subset of $M$.

\item For any atomic model $M$, let $C_M:=\{p\in S_{at}(M): p \ \hbox{is
    constrained}\}$. We say $\At$ has {\em only constrained types} if
    $S_{at}(N)=C_N$ for every atomic model $N$.}

    \end{enumerate}
\end{definition}

We use the term constrained in place of `does not split over a finite subset' for its brevity, which is useful in subsequent definitions.

\begin{remark}

{\em The concepts in clauses (2) and (3) above
 give a method of proving that an atomic class is $\omega$-stable.
 $\At$ is $\omega$-stable holds if  a) $C_M$ is countable for some/every
countable atomic $M$ and  b) $\At$ has only constrained types. Immediately
$\omega$-stability implies a)
and the deduction
 of b) is standard \cite[Lemma 20.8]{Baldwincatmon}.
 Under the assumption of $\aleph_1$-categoricity, Theorem~\ref{CM} gives a)
and Theorem~\ref{omegastbchar} gives three equivalents of b).  However, the
short proof of Theorem~\ref{omegastbchar} makes crucial use of
Theorem~\ref{constrainedexists}, whose lengthy proof is relegated to
Section~\ref{secthree}.}

%With Theorem~\ref{CM}, We see that a)
%(Theorem~\ref{CM}) and b)
% (Theorem~\ref{omegastbchar}) follow from $\aleph_1$-categoricity by the arguments of
% Sections~\ref{omegastbchar} and \ref{secthree}.

\end{remark}

%\begin{remark}  {\em Clauses (2) and (3) above give a method of proving that an atomic class is $\omega$-stable.  In particular, $\At$ is $\omega$-stable if and only if
%%$C_M$ is countable for some/every countable atomic $M$ and $\At$ has only constrained types.  With Theorem~\ref{CM}, we see that the former condition follows
%from $\aleph_1$-categoricity and later results give sufficient conditions for the latter.
%}
%\end{remark}

The constrained types $p\in C_M$ are those that have a defining scheme over a
uniform finite set of parameters, i.e., if $p\in S_{at}(M)$ does not split
over $\abar$, then for every parameter-free $\phi(\xbar,\ybar)$, there is an
$\abar$-definable formula $d_p\xbar\phi(\xbar,\ybar)$ such that for any
$\bbar\in M^{|\ybar|}$, $\phi(\xbar,\bbar)\in p$ if and only if $M\models
d_p\xbar\phi(\xbar,\bbar)$.
%The following extension lemma follows immediately.
We record three easy facts about extensions and restrictions of  types.

%Our first result is an easy application of the Downward L\"owenheim-Skolem theorem.

\begin{lemma}  \label{dropunconstrained}
%Suppose $At$ contains an uncountable
%model.

\begin{enumerate}

\item  For any atomic models $M\preceq N$ and $A\subseteq M$ is finite, then for any  $q\in S_{at}(N)$ that does not split over $A$,
the restriction $q\mr{M}$ does not split over $A$; and any $p\in S_{at}(M)$ that does not split over $A$ has a unique non-splitting extension $q\in S{at}(N)$.

%\item  For any atomic models $M\preceq N$, if $p\in S_{at}(M)$ does not split over $A$, then there is a unique
%(constrained) extension $q\in S_{at}(N)$ of $p$ that also does not split over $A$.
%%constrained $p\in C_M$ has a constrained extension $q\in C_N$.

\item   If some atomic $N$ has an unconstrained $p\in S_{at}(N)$, then for every countable $A\subseteq N$, there is a countable $M\preceq N$
with $A\subseteq M$ for which the restriction $p\mr{M}$ is unconstrained.

\item  $\At$ has only constrained types if and only if $S_{at}(M)=C_M$ for every/some countable atomic model $M$.
\end{enumerate}
\end{lemma}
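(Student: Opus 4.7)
My plan is to handle the three clauses of Lemma~\ref{dropunconstrained} in order, with clause (1) carrying most of the real weight.

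For clause (1), the restriction statement is immediate: any witnesses to splitting of $q\mr{M}$ over $A$ lie in $M\subseteq N$ and automatically witness splitting of $q$. For the extension, I define $q$ by the canonical non-splitting recipe. Given $\cbar\in N$, atomicity of $N$ implies $\tp(A\cbar)$ is isolated by some complete formula, and atomicity of $M$ then realizes this formula in $M$, yielding $\bbar\in M$ with $\tp(\bbar/A)=\tp(\cbar/A)$. Declare $\phi(\xbar,\cbar)\in q$ iff $\phi(\xbar,\bbar)\in p$; non-splitting of $p$ over $A$ makes this independent of the choice of $\bbar$, so $q$ is a well-defined complete type over $N$ that extends $p$, does not split over $A$, and is the unique such extension. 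To see $q\in S_{at}(N)$, let $\abar\models q$ and fix a finite $\cbar\in N$; picking $\bbar\in M$ as above, the construction forces $\tp(A\cbar\abar)=\tp(A\bbar\abar)$ as types over $\emptyset$, and the latter is isolated because $A\bbar\abar\subseteq M\abar$ is atomic, so $\tp(A\cbar\abar)$ is isolated as well.

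For clause (2), I construct a countable elementary chain $M_0\preceq M_1\preceq\cdots$ inside $N$ with $A\subseteq M_0$. At stage $n+1$, for each of the countably many finite $F\subseteq M_n$, since $p$ splits over $F$, I choose a formula $\phi_F(\xbar,\ybar)$ and tuples $\bbar_F,\bbar_F'\in N$ with $\tp(\bbar_F/F)=\tp(\bbar_F'/F)$ yet $\phi_F(\xbar,\bbar_F)\wedge\neg\phi_F(\xbar,\bbar_F')\in p$; downward L\"owenheim-Skolem then supplies a countable $M_{n+1}\preceq N$ containing $M_n$ together with all these witnesses. Setting $M=\bigcup_n M_n$, each finite $F\subseteq M$ lies in some $M_n$, and the corresponding witnesses live in $M_{n+1}\subseteq M$; hence $p\mr{M}$ splits over every finite subset of $M$.

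Clause (3) then falls out. The ``every/some countable'' equivalence is immediate from uniqueness of the countable atomic model up to isomorphism. For the implication from ``$S_{at}(M)=C_M$ for the countable atomic $M$'' to ``$\At$ has only constrained types'', I argue by contrapositive: if some atomic $N$ carries an unconstrained $p\in S_{at}(N)$, clause (2) applied with $A=\emptyset$ yields a countable $M'\preceq N$ with $p\mr{M'}$ unconstrained, and $M'$ is isomorphic to the given countable atomic $M$, transferring the conclusion. The main obstacle I expect is the atomicity verification for $q$ in clause (1): one must certify that the purely syntactic non-splitting scheme interacts correctly with the isolating formulas of $N$-tuples to keep $N\abar$ atomic, bridging splitting and the ambient $S_{at}$ framework. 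Once that is in hand, clause (2) is a routine chain construction and clause (3) is bookkeeping.
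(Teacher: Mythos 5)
Your proposal is correct and follows essentially the same route as the paper: the same canonical non-splitting extension formula in clause (1), the same $\omega$-chain construction in clause (2), and the same reduction of clause (3) to clause (2) with $A=\emptyset$. The only difference is that you spell out the atomicity and well-definedness checks for the extension $q$ in clause (1), which the paper leaves implicit; these checks are carried out correctly.
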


\begin{proof}  (1)  The first statement is immediate.  For the second, given
 $p(\xbar)\in S_{at}(M)$ non-splitting  over $A$, put $$q(\xbar):=\{\phi(\xbar,\bbar):\bbar\in N^{|\ybar|}, \phi(\xbar,\bbar')\in p
\ \hbox{for some
$\bbar'\in M$ with $\tp(\bbar'/A)=\tp(\bbar/A)$}\}$$

(2)  We construct $M\preceq N$ as the union of an increasing elementary $\omega$-chain $M_n\preceq N$ of countable, elementary substructures of $N$
with $A\subseteq M_0$ and, for each $n\in\omega$, $p\mr{M_{n+1}}$ splits over every finite $F\subseteq M_n$.  It follows that $M^*:=\bigcup\{M_n:n\in\omega\}$
is as required.

(3)  Left to right is immediate.  For the converse, assume there is some atomic $N$ with an unconstrained type $p\in S_{at}(N)$.  By (2) there is a countable $M\preceq N$
with $p\mr{M}$ unconstrained.
\end{proof}

Much of the paper concerns analyzing atomic models $N$ of size $\aleph_1$.   It is useful to consider any such $N$ as a direct limit of a family of countable, atomic submodels.

\begin{definition}  {\em  For $N$ of size $\aleph_1$, a {\em filtration of $N$} is a continuous, increasing sequence $(M_\alpha:\alpha\in\omega_1)$ of countable, elementary substructures.}
\end{definition}

When $N$ is atomic, then in any filtration $(M_\alpha:\alpha\in\omega_1)$ of $N$, each of the countable models are isomorphic.  As well, any two filtrations $(M_\alpha:\alpha\in\omega_1)$ and $(M_\alpha':\alpha\in\omega_1)$ agree on a club.  Thus, for any given countable $M\preceq N$, $\{\alpha\in\omega_1:M\preceq  M_\alpha$ and
$M_\alpha=M_\alpha'\}$ is club as well.

\subsection{$\aleph_1$-categoricity implies $C_M$ is countable
} \label{sectwo1}

Throughout this subsection, $\At$ is an atomic class that admits an
uncountable model and $M$ {\em  denotes a fixed copy of the countable atomic
model}. We aim to count the set  $C_M=\{p\in S_{at}(M):p$ is constrained$\}$.
Theorem~\ref{extra} yields the main result of the subsection:

%We prove in this subsection that all types over the countable model $M$ are
%constrained (weak contrapositive of Theorem~\ref{extra}).
%The proof of the following Theorem is more substantial, and is relegated to Section~\ref{sectwo1}.

\begin{theorem}  \label{CM}   If $\At$ is $\aleph_1$-categorical, then $C_M$ is countable for every/some countable atomic model $M$.
\end{theorem}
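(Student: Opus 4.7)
The plan is to argue by contradiction: suppose $C_M$ is uncountable. Since $C_M = \bigcup \{C_M^F : F\subseteq M \textrm{ finite}\}$, where $C_M^F$ denotes the types in $S_{at}(M)$ that do not split over the finite set $F$, a pigeonhole argument gives some $F$ with $C_M^F$ uncountable. Because $C_M^F$ sits as a closed set inside the Polish space $S(M)$, intersected with the $G_\delta$ set $S_{at}(M)$, a Cantor--Bendixson analysis further lets us extract a perfect subfamily of size $2^{\aleph_0}$ if convenient.

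I would then exploit the unique atomic model $N$ of size $\aleph_1$ guaranteed by $\aleph_1$-categoricity, fixing $M\preceq N$. By Lemma~\ref{dropunconstrained}(1), each $p\in C_M^F$ has a canonical non-splitting extension $\tilde p\in S_{at}(N)$ over $F$, so the family $\{\tilde p : p\in C_M^F\}$ consists of pairwise distinct constrained types over $N$. The first subgoal is to verify that every such $p$ is actually realized in $N$: a realizer $\bar a$ of $p$ in some atomic extension $M'$ of $M$ can, by completing $M'$ to an atomic model of cardinality $\aleph_1$, invoking $\aleph_1$-categoricity, and performing a back-and-forth over the unique countable atomic model, be transferred so as to lie in $N$ over the fixed $M$. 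Hence $N$ realizes uncountably many distinct types of $C_M^F$.

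The core step, and the main obstacle, is to derive a contradiction from this abundance. My plan is to use the uncountable family as a coding device to build a second atomic model $N^\ast$ of size $\aleph_1$ that realizes a provably different ``constrained spectrum'' over a copy of $M$, violating the uniqueness of $N$ up to isomorphism. Concretely, I would perform a transfinite construction of length $\omega_1$, producing a continuous chain of countable atomic submodels; using the canonical non-splitting extensions, the prescribed realizations can be arranged coherently up the chain, while at the same time an omission step at successor stages avoids the designated $\tilde p$'s. The non-isomorphism of $N^\ast$ with $N$ would then be extracted from an isomorphism-invariant counting of which constrained types over copies of $M$ are realized, quotiented by the countable action of $\aut(M)$.

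I expect the delicate point to be ensuring atomicity throughout the construction while selectively omitting constrained types. This is where I would exploit the characterization of constrained types via their defining scheme over the finite support $F$: ``omitting'' a constrained type is really just imposing the negation of one definable condition at each stage, which should be compatible with maintaining an atomic chain. If, instead of this two-model construction, one can prove directly that the $\aleph_1$-sized model $N$ realizes only countably many distinct types in $S_{at}(M)$ over any fixed countable $M\preceq N$ (a form of minimality of $N$ over $M$ under $\aleph_1$-categoricity), then combined with the realization step above we obtain $|C_M|\leq\aleph_0$ immediately.
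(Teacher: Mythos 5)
Your overall strategy---derive from an uncountable $C_M$ a violation of $\aleph_1$-categoricity---points in the same direction as the paper, and your opening reductions ($C_M$ is $F_\sigma$ in the Polish space $S_{at}(M)$, hence countable or containing a perfect set) match the paper's first line. But there are two concrete gaps. First, the realization step fails as stated: completing $M'$ to a model $N'$ of size $\aleph_1$ and invoking categoricity gives an isomorphism $f:N'\to N$ that in general moves $M$; you obtain only that $f(p)$, a type over the countable submodel $f(M)\preceq N$, is realized in $N$, not that $p$ itself is realized over your fixed copy of $M\preceq N$. No back-and-forth over the countable atomic model repairs this, since a realization of $p$ over that particular $M$ simply need not exist in $N$. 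The paper's fix is exactly the isomorphism-invariant object you gesture at in your last paragraph: for $N$ with filtration $(M_\beta)$ it defines $R_N$ as the set of $p\in C_M$ such that $\pi(p)$ is realized in $N$ for \emph{every} isomorphism $\pi:M\to M_\beta$, for stationarily many $\beta$, and proves $|R_N|\le\aleph_1$ (a pigeonhole: $\omega_2$ many such types would have two of them realized by the same element of $N$) and that every $p\in C_M$ lies in $R_N$ for some $N$.

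Second, and more seriously, your core step does not close under CH. If $2^{\aleph_0}=\aleph_1$, the fact that $N$ realizes uncountably many types over $M$ is no contradiction ($|N|=\aleph_1$), and your proposed two-model construction rests on the claim that omitting a constrained type ``is really just imposing the negation of one definable condition at each stage.'' That is false: failing to realize a complete type $p\in S_{at}(M)$ means every element omits \emph{some} formula of $p$, the witnessing formula depending on the element, and carrying such an omission coherently through an atomic $\omega_1$-chain while simultaneously controlling all images $\pi(p)$ under isomorphisms onto submodels is precisely the hard construction you have not supplied. The paper avoids any omitting argument by a non-uniform case split: if $2^{\aleph_0}<2^{\aleph_1}$ it quotes the Keisler--Shelah result that fewer than $2^{\aleph_1}$ models in $\aleph_1$ forces $\omega$-stability (hence $S_{at}(M)$, and so $C_M$, is countable); if $2^{\aleph_0}=2^{\aleph_1}$ then $|C_M|=2^{\aleph_0}>\aleph_1\ge|R_{N_i}|$ forces $2^{\aleph_0}=2^{\aleph_1}$ pairwise non-isomorphic models of size $\aleph_1$. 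You would need an argument playing the role of at least one of these two halves.
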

%We remark that this theorem leaves open the possibility that there are uncountably many unconstrained types
%$S_{at}(M)$.  In
%Section~\ref{secthree} we will address this issue.

As $M$ is countable, the  natural action of $Aut(M)$ on the set $M$ induces
an action of $Aut(M)$ on $S_{at}(M)$.   When $M$ is atomic, a useful
characterization of $p \in C_M$ is: $C_M$
consists of those elements of $S_{at}(M)$ whose orbits are countable.
However, for the results in this section we  only require the easy half of
this statement.

\begin{lemma}  \label{alliso}   Suppose $p\in C_M$ and $M'$ is any countable, atomic model.
Then:
\begin{enumerate}
\item  $\{\pi(p):\pi:M\rightarrow M'$ an isomorphism$\}$ is a countable set
    of constrained types in  $S_{at}(M')$.
\item  There is a countable atomic $M^*\succ M'$ realizing $\pi(p)$ for
    every isomorphism $\pi:M\rightarrow M'$.
\end{enumerate}
\end{lemma}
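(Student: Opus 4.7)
The plan is to leverage the finite non-splitting base of $p$ to parametrize the images $\pi(p)$ in $S_{at}(M')$, and then to realize the resulting countable family of types by a standard union-of-chains construction. The only mildly nontrivial step is showing in (1) that $\pi(p)$ depends only on the pointwise restriction $\pi\mr F$; everything else is bookkeeping.

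For (1), I fix a finite $F\subseteq M$ over which $p$ does not split. For any isomorphism $\pi:M\to M'$, splitting is a purely formulaic condition preserved under isomorphism, so $\pi(p)\in S_{at}(M')$ does not split over $\pi(F)$; in particular every $\pi(p)$ is constrained. The key step is to verify that if $\pi_1,\pi_2:M\to M'$ agree on $F$, then $\pi_1(p)=\pi_2(p)$. Set $\sigma:=\pi_2\circ\pi_1^{-1}\in\aut(M')$; then $\sigma$ fixes $F':=\pi_1(F)=\pi_2(F)$ pointwise, so for every $\bbar\in M'$ the tuples $\bbar$ and $\sigma^{-1}(\bbar)$ have the same type over $F'$. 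Non-splitting of $\pi_1(p)$ over $F'$ then gives
\[
\phi(\xbar,\bbar)\in\pi_1(p)\iff \phi(\xbar,\sigma^{-1}(\bbar))\in\pi_1(p)\iff \phi(\xbar,\bbar)\in \sigma(\pi_1(p))=\pi_2(p),
\]
so $\pi_1(p)=\pi_2(p)$. Since $F$ is a fixed finite tuple and $M'$ is countable, there are only $\aleph_0$ possibilities for $\pi\mr F$, giving at most $\aleph_0$ distinct $\pi(p)$.

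For (2), I would enumerate the countable set from (1) as $\{q_n:n<\omega\}$ and construct an elementary chain $M'=M_0\preceq M_1\preceq\cdots$ of countable atomic models. At stage $n$, each $q_n\in S_{at}(M')$ is constrained, so by Lemma~\ref{dropunconstrained}(1) it admits a unique non-splitting extension $\hat q_n\in S_{at}(M_n)$. Pick $\abar_n\in\Mscr$ realizing $\hat q_n$; then $M_n\abar_n$ is a countable atomic set, which I extend to a countable atomic model $M_{n+1}\succeq M_n$ containing $\abar_n$, using the standard fact that every countable atomic set embeds into a countable atomic model. The union $M^*:=\bigcup_n M_n$ is then a countable elementary extension of $M'$, atomic because every finite tuple lies in some $M_n$, and $\abar_n\in M^*$ realizes $q_n$ for every $n$, as required.
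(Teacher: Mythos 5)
Your proposal is correct and follows essentially the same route as the paper: part (1) reduces $\pi(p)$ to the finite restriction $\pi\mr F$ via non-splitting (you merely spell out, with the automorphism $\sigma=\pi_2\circ\pi_1^{-1}$, the step the paper calls immediate), and part (2) is the same chain construction realizing the unique non-splitting extensions $\hat q_n$ one at a time. No gaps.
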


\begin{proof}  (1) Choose a finite $A\subseteq M$ over which $p$ does not split.  As $M'$ is countable,
$A$ has only countably many images under isomorphisms $\pi:M\rightarrow M'$,
and it follows immediately from non-splitting that if
$\pi_1,\pi_2:M\rightarrow M'$ are isomorphisms satisfying $\pi_1(a)=\pi_2(a)$
for each $a\in A$, then $\pi_1(p)=\pi_2(p)$.

(2)  Using (1), let $\{q_i:i<\gamma\le\omega\}\subseteq S_{at}(M')$ be the
set of all images of $p$ under isomorphisms $\pi:M\rightarrow M'$. We
recursively construct an increasing sequence of countable models
$\{M_i:i<\gamma\}$ with $M_0=M'$ and, for each $i<\gamma$, $M_i$ contains a
realization of $q_j$ for every $j<i$.  Supposing $i<\gamma$ and $M_i$ has
been defined, let $q_i^*\in S_{at}(M_i)$ be the unique (\cite[Theorem
19.9]{Baldwincatmon}) non-splitting extension of $q_i\in S_{at}(M')$. Then
letting $d_i$ realize $q_i^*$, let $M_{i+1}\in At$ be an elementary extension
of $M_i$ containing $M_i\cup\{d_i\}$. Then $\bigcup_{i<\omega}M_i    $ works.
 %Clearly, $B_{i+1}$ is a countable, atomic set. Thus,
%$B^*:=\{B_i:i<\gamma\}$ is a countable, atomic set containing a realization
%of every $q_i$.  Then any countable, atomic model $M^*$ containing $B^*$
%works.
\end{proof}

\begin{definition}\label{RN}  {\em  Suppose  $(M_\beta:\beta<\omega_1)$
is a filtration of some $N\in\At$ of size $\aleph_1$.
For each $\beta<\omega_1$, let
$$R^\beta_N:=\{p\in C_M: \pi(p)\ \hbox{is realized in $N$ for every isomorphism $\pi:M\rightarrow M_\beta$}\}$$
and let $R_N:=\{p\in C_M:$ $p\in R^\beta_N$ for a stationary set of
$\beta\in\omega_1\}$. }
\end{definition}

As any two filtrations of $N$ agree on a club, it follows that $R_N$ is
independent of the choice of filtration of $N$. Similarly,  $R_N$ is an
isomorphism invariant, i.e., if $N\cong N'$ are each atomic models of size
$\aleph_1$, then $R_N=R_{N'}$.
%So, in order to prove Proposition~\ref{propA}, we need to find at least $2^{\aleph_0}$ models $N$ of size $\aleph_1$ with distinct $R_N$'s.
%This is accomplished by the next two lemmas.
We record two facts about $R_N$.

\begin{lemma}    \label{prev2}
\begin{enumerate}
\item  For any $N\in\At$ of size $\aleph_1$, $|R_N|\le \aleph_1$.
\item  For any $p\in C_M$ there is  some $N\in \At$ of size $\aleph_1$ such
    that $p\in R_N$.
\end{enumerate}

\end{lemma}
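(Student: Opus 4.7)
For part (1), my plan is a straightforward counting argument. Fix any filtration $(M_\beta : \beta < \omega_1)$ of $N$, and for each $\beta$ choose one isomorphism $\pi_\beta : M \to M_\beta$, available because $M$ and $M_\beta$ are both countable atomic. Set $S_\beta := \{p \in C_M : \pi_\beta(p) \text{ is realized in } N\}$. The map $p \mapsto \pi_\beta(p)$ is injective, and the set of finite-arity types over $M_\beta$ realized in $N$ has size at most $|N|^{<\omega} = \aleph_1$, so $|S_\beta| \leq \aleph_1$. If $p \in R_N$, then $p \in R_N^\beta$ for at least one $\beta$, which applied to the particular isomorphism $\pi_\beta$ gives $p \in S_\beta$. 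Thus $R_N \subseteq \bigcup_{\beta < \omega_1} S_\beta$, yielding $|R_N| \leq \aleph_1$.

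For part (2), given $p \in C_M$, I plan to construct $N$ by hand as a continuous elementary chain $(M_\beta : \beta < \omega_1)$ of countable atomic models. Set $M_0 := M$. At a successor stage, given the countable atomic $M_\beta$, apply Lemma~\ref{alliso}(2) to obtain a countable atomic extension $M_{\beta+1} \succeq M_\beta$ that realizes $\pi(p)$ for every isomorphism $\pi : M \to M_\beta$. At limit stages take unions, which remain countable and atomic. Let $N := \bigcup_{\beta < \omega_1} M_\beta$. Then $N \in \At$ has size $\aleph_1$, and by construction, for every $\beta < \omega_1$ and every isomorphism $\pi : M \to M_\beta$, the type $\pi(p)$ is realized in $M_{\beta+1} \subseteq N$. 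This gives $p \in R_N^\beta$ for every $\beta < \omega_1$, so in particular $p \in R_N$.

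I do not foresee a serious obstacle here: Lemma~\ref{alliso} carries the weight in (2), and the counting bound in (1) is routine. The only point of care is noticing that $R_N^\beta$ quantifies over \emph{all} isomorphisms $\pi : M \to M_\beta$; a single fixed $\pi_\beta$ per stage suffices for the upper bound in (1), but building an $N$ that simultaneously realizes all of the $\pi(p)$ at each stage in (2) is exactly what the clause ``there is a countable atomic $M^*$ realizing $\pi(p)$ for every isomorphism $\pi$'' in Lemma~\ref{alliso}(2) was arranged to deliver.
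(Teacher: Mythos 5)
Your proposal is correct and follows essentially the same route as the paper: part (2) is the paper's construction verbatim (iterate Lemma~\ref{alliso}(2) along a continuous chain so that $p\in R_N^\beta$ for every $\beta$), and part (1) rests on the same counting fact the paper uses, namely that at most $\aleph_1$ types over a countable $M_\beta$ are realized in $N$ and that an isomorphism $\pi_\beta$ acts injectively on types. The only cosmetic difference is that you give a direct union bound $R_N\subseteq\bigcup_\beta S_\beta$ where the paper argues by contradiction, taking an $\omega_2$-sequence from $R_N$ and applying pigeonhole to land in a single $R_N^{\alpha^*}$.
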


\begin{proof}   (1)  Choose any sequence  $\<p_i:i\in\omega_2\>$ from $R_N$ and we will show that $p_i=p_j$ for some distinct $i,j$.
Fix a filtration $(M_\alpha)$ of $N$. We shrink the sequence in two stages.
First, for each $i< \omega_2$, let $\alpha(i)\in\omega_1$ be least such that
$p_i\in R_N^{\alpha(i)}$. By pigeonhole and reindexing we may assume $\alpha(i)=\alpha^*$
for all $i$, i.e., each $p_i\in R^{\alpha^*}_N$. Now fix any isomorphism
$\pi:M\rightarrow M_{\alpha^*}$.  By definition of $R_N^{\alpha^*}$,
$\pi(p_i)$ is realized in $N$ for every $p_i$. But, as $|N|=\aleph_1$, there
is $c^*\in N$ realizing both $\pi(p_i)$ and $\pi(p_j)$ for some distinct
$i,j$.    Thus, $\pi(p_i)=\pi(p_j)$, hence $p_i=p_j$.

(2)  Fix $p\in C_M$.  Using Lemma~\ref{alliso}(2) at each level, construct a
continuous, increasing elementary sequence $M_\alpha$ of countable atomic
models such that, for every $\alpha<\omega_1$, $\pi(p)$ is realized in
$M_{\alpha+1}$ for every isomorphism $\pi:M\rightarrow M_\alpha$.  Put
$N:=\bigcup_{\alpha<\omega_1} M_\alpha$. Then $(M_\alpha)$ is a filtration of
$N$ and $p\in R^\alpha_N$ for every $\alpha<\omega_1$.  Thus, $p\in R_N$.
\end{proof}

%\begin{lemma}  \label{prev}  For any $p\in C_M$ there is  some $N\in \At$ of size $\aleph_1$ such that $p\in R_N$.
%\end{lemma}
%
%\begin{proof}   Fix $p\in C_M$.  Using Lemma~\ref{alliso}(2) at each level, construct a continuous, increasing elementary sequence $M_\alpha$ of countable
%atomic models such that, for every $\alpha<\omega_1$, $\pi(p)$ is realized in $M_{\alpha+1}$ for every isomorphism $\pi:M\rightarrow M_\alpha$.  Put $N:=\bigcup_{\alpha<\omega_1} M_\alpha$.
%Then $(M_\alpha)$ is a filtration of $N$ and $p\in R^\alpha_N$ for every $\alpha<\omega_1$.  Thus, $p\in R_N$.
%\end{proof}

%\medskip
%\noindent{\it  Proof of Proposition~\ref{propA}.}  Assume $\neg CH$, but $C_M$ is uncountable.

%Thus, $|C_M|=2^{\aleph_0}>\aleph_1$.
%To show that $I(\At,\aleph_1)\ge 2^{\aleph_0}$, it suffices to show that for any $\kappa< 2^{\aleph_0}$, if $\{N_i:i<\kappa\}$ is any set of atomic models, each of size $\aleph_1$,
%then there is some other atomic $N^*$ of size $\aleph_1$ such that $R_{N^*}\neq R_{N_i}$ for every $i<\kappa$.  But this is easy.  As $|R_{N_i}|\le \aleph_1$ for each $i<\kappa$,
%then there is some $p^*\in C_M\setminus\bigcup\{R_{N_i}:i<\kappa\}$.  Applying Lemma~\ref{prev} to $p^*$ gives some $N^*$ with $p^*\in R_{N^*}$.  As $p^*\not\in R_{N_i}$ for each
%$i<\kappa$, $N^*$ is not isomorphic to any $N_i$.  \qed
%
%\end{proof}
We are now able to prove the theorem below, which clearly implies
Theorem~\ref{CM}.

\begin{theorem}  \label{extra}  If $C_M$ is uncountable, then $I(\At,\aleph_1)=2^{\aleph_1}$.
\end{theorem}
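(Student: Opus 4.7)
My plan is a many-models construction: use the hypothesis $|C_M|\ge\aleph_1$ to encode subsets of $\omega_1$ into the isomorphism invariant $R_N$. Since $R_N$ is an isomorphism invariant (as observed just after Definition~\ref{RN}), producing, for each $S\subseteq \omega_1$, an atomic $N_S$ of size $\aleph_1$ satisfying $\{\alpha<\omega_1:p_\alpha\in R_{N_S}\}=S$ will at once give $I(\At,\aleph_1)\ge 2^{\aleph_1}$, and the reverse inequality is immediate. The first step is to fix an injective enumeration $\langle p_\alpha:\alpha<\omega_1\rangle$ of an $\aleph_1$-sized subset of $C_M$.

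For each $S\subseteq\omega_1$, I would build $N_S=\bigcup_{\beta<\omega_1}M_\beta^S$ as the union of a continuous, increasing elementary chain of countable atomic models with $M_0^S=M$. At successor stages, iterate Lemma~\ref{alliso}(2) over the countably many pairs $(\alpha,\pi)$ with $\alpha\in S\cap(\beta+1)$ and $\pi:M\to M_\beta^S$ an isomorphism, producing a countable atomic extension $M_{\beta+1}^S$ that realizes $\pi(p_\alpha)$ for every such pair. This immediately secures the positive direction: for $\alpha\in S$, at every $\beta\ge\alpha$ and every isomorphism $\pi:M\to M_\beta^S$, $\pi(p_\alpha)$ is realized in $M_{\beta+1}^S\subseteq N_S$, so the set $\{\beta:p_\alpha\in R_{N_S}^\beta\}$ contains a tail of $\omega_1$ and hence is stationary, giving $p_\alpha\in R_{N_S}$.

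The hard part, and the main obstacle, is the negative direction: arranging that $p_\alpha\notin R_{N_S}$ for every $\alpha\notin S$. By Definition~\ref{RN}, this demands a club of $\beta$ at which some isomorphism $\pi_\beta:M\to M_\beta^S$ leaves $\pi_\beta(p_\alpha)$ unrealized in all of $N_S$. My plan is to carry out the successor-stage extensions as parsimoniously as possible (via an atomic prime-model-style construction on top of the explicitly prescribed realizations), and to exploit the uniqueness of non-splitting extensions of constrained types from Lemma~\ref{dropunconstrained}(1): each constrained type $\pi(p_\alpha)$ has a single canonical non-splitting extension to $N_S$, and the distinctness of the $p_\alpha$'s means these canonical extensions are distinct types away from a controllable set of coincidences. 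A bookkeeping along a diagonalizing club should then exhibit, for each $\alpha\notin S$, the required isomorphism $\pi_\beta:M\to M_\beta^S$ whose image $\pi_\beta(p_\alpha)$ is avoided by every realization ever added.

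I expect the technical core to lie precisely in justifying this parsimony: verifying that a sufficiently careful chain of atomic extensions realizes no images $\pi(p_\alpha)$ with $\alpha\notin S$ outside a non-stationary set of stages. This should follow from the structural results on $C_M$ already in place (especially Lemma~\ref{dropunconstrained}), combined with a standard diagonal-plus-club argument controlling accidental coincidences between $\pi(p_{\alpha'})$ with $\alpha'\in S$ and $\pi'(p_\alpha)$ with $\alpha\notin S$ over the countably many isomorphisms $M\to M_\beta^S$ available at each stage.
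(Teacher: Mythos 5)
There is a genuine gap, and it sits exactly where you place it: the ``negative direction,'' i.e.\ arranging $p_\alpha\notin R_{N_S}$ for $\alpha\notin S$. Everything there rests on an omitting-types capability that neither the paper nor the general theory of atomic classes provides at this point. To keep $\pi(p_\alpha)$ unrealized you must, at each of $\omega_1$ successor stages, produce a countable atomic elementary extension of $M_\beta^S$ that realizes the prescribed images $\pi(p_{\alpha'})$, $\alpha'\in S$, while avoiding \emph{every} image $\pi'(p_\alpha)$ for \emph{every} $\alpha\notin S$ and every isomorphism $\pi':M\to M_\beta^S$ (and keep this up through limits). The ``atomic prime-model-style construction'' you invoke presupposes prime models over sets, which are available for atomic classes only in the $\omega$-stable case --- and $\omega$-stability is precisely what this theorem is being used to establish, so it cannot be assumed. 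Lemma~\ref{dropunconstrained}(1) gives uniqueness of the non-splitting extension of a constrained type, but says nothing about which \emph{other} types a model realizing it must also realize; realizing one constrained type can force the realization of others, and no structural result in the paper controls these ``accidental coincidences.'' Your own closing sentence (``I expect the technical core to lie precisely in justifying this parsimony'') concedes that the essential step is missing rather than proved.

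The paper avoids this entirely: its proof is a counting argument, not a construction. It observes that $C_M$ is $F_\sigma$ in the Polish space $S_{at}(M)$, so uncountable $C_M$ has size $2^{\aleph_0}$; under $2^{\aleph_0}<2^{\aleph_1}$ it quotes the Keisler--Shelah result that $I(\At,\aleph_1)<2^{\aleph_1}$ forces $\omega$-stability (hence $C_M$ countable); and under $2^{\aleph_0}=2^{\aleph_1}$ it uses only your ``positive direction'' (Lemma~\ref{prev2}(2)) together with $|R_N|\le\aleph_1$ and the covering $C_M\subseteq\bigcup_i R_{N_i}$ to force $2^{\aleph_0}$ many isomorphism classes by pigeonhole. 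So the invariant $R_N$ is used only to \emph{separate} models that already exist, never to encode an arbitrary $S\subseteq\omega_1$. If you want to salvage your approach you would need a genuine omitting-types theorem for constrained types over uncountable atomic models, which is a substantially harder (and here unnecessary) undertaking.
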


\begin{proof}    It is easily verified that $C_M$ is an $F_\sigma$ subset of the Polish space $S_{at}(M)$,
so on general grounds, $C_M$ is either countable or else it contains a
perfect set.

Our proof is non-uniform, depending on the  relative sizes of $2^{\aleph_0}$
and $2^{\aleph_1}$. First, under weak CH, i.e.,  $2^{\aleph_0}< 2^{\aleph_1}$
then combining arguments of  Keisler \cite{Keislerlq} and Shelah
\cite[Theorem 18.16]{Baldwincatmon} shows   if $I(\At,\aleph_1)\neq
2^{\aleph_1}$, then $\At$ is $\omega$-stable, so $S_{at}(M)$ is countable. As
$C_M\subseteq S_{at}(M)$, $C_M$ is countable as well.

On the other hand,
 assume  $2^{\aleph_0}=2^{\aleph_1}$, so in particular WCH fails.
 Under this assumption, we will prove that if $C_M$ is uncountable, then $I(\At,\aleph_1)=2^{\aleph_0}$,
which equals $2^{\aleph_1}$ under our cardinal hypotheses for this case.
Indeed, choose representatives $\{N_i:i\in\kappa\}$ for the isomorphism
classes of atomic models of size $\aleph_1$. If $C_M$ is uncountable, then
 the first sentence of the argument shows $|C_M|=2^{\aleph_0}$.  But by
Lemma~\ref{prev2}, $C_M\subseteq \bigcup \{R_{N_i}:i\in\kappa\}$ and
$|R_{N_i}|\le \aleph_1$ for each $i\in\kappa$. As we are assuming
$2^{\aleph_0}>\aleph_1$, we conclude $\kappa\ge 2^{\aleph_0}$, as required.
\end{proof}

%So we know that $\aleph_1$-categoricity of $At$ implies the countable model
%$M$ satisfies: $C_M$  is countable. But it may have unconstrained types.

\subsection{Limit types and $\aleph_1$-categoricity}

%\subsection{Characterizing $\omega$-stability}\label{charwstb}
%
%In this Subsection, we first derive some consequences of $\omega$-stability
%for constrained types.  Then we show that $\aleph_1$-categoricity and the
%existence of a model in $\beth_1^+$ yield a converse to that result.
%

\begin{definition}\label{defconmod} {\em

A type  $p\in S_{at}(N)$ is a {\em limit type} if the restriction $p\mr{M}$
    is realized in $N$ for every countable $M\preceq N$.

}
\end{definition}

Trivially, for every $N$, every  type in $S_{at}(N)$ realized in $N$ is a limit type.
Since we allow $M = N$ in the definition of a limit type, if $M$ is countable, then
the only limit types in $S_{at}(M)$ are those realized in $M$.

Also, if $(M_\alpha:\alpha\in\omega_1)$ is a filtration of $N$, then a type $p\in S_{at}(N)$ is a limit type
if and only if $N$ realizes $p\mr{M_\alpha}$ for cofinally many $\alpha$.

 The long proof of the
 following crucial theorem is relegated  to
 Section~\ref{secthree}. Note that there are no additional
 assumptions on $\At$, other than the existence of an uncountable, atomic
 model.

\begin{theorem}  \label{constrainedexists}  If $\At$ admits an uncountable, atomic model,
 then there is some uncountable $N\in\At$ for which every limit type in $S_{at}(N)$ is constrained.
\end{theorem}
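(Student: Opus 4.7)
The plan is to combine a forcing construction of the desired $N$ with Keisler's model existence theorem for $L_{\omega_1,\omega}(Q)$, so that the theorem is recovered in ZFC via absoluteness. I would first reformulate the conclusion ``there is an uncountable atomic $N$ in which every limit type is constrained'' as satisfiability of an $L_{\omega_1,\omega}(Q)$-sentence $\psi$ in a suitable expansion of the language of $T$. The expansion adds Skolem-like function symbols $f_\phi$, one for each formula $\phi(\xbar,\ybar)$, whose value on a tuple $\abar$ encodes a finite set $F \subseteq N$ together with a defining scheme witnessing that $\tp(\abar/N)$ does not split over $F$; it also adds predicates coding a filtration $(M_\alpha)_{\alpha<\omega_1}$. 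The sentence $\psi$ then asserts that the model is an uncountable atomic model of $T$ (with the $Q$-quantifier providing uncountability), that the filtration is continuous and elementary, and that the $f_\phi$ correctly compute the types of all realized tuples on a cofinal subset of filtration stages, so that every limit type is captured by a constrained scheme. By Keisler, satisfiability of $\psi$ is absolute between transitive ZFC models, so it suffices to produce a model of $\psi$ in some forcing extension of $V$.

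For the forcing step, I would work in a ground model where $\At$ still has an uncountable atomic model (a property preserved under mild forcings), and use a countable-condition poset $\P$ whose conditions are continuous increasing countable atomic chains $(M_\alpha)_{\alpha \leq \gamma}$ for $\gamma < \omega_1$, ordered by end-extension; the generic union would be the desired $N$. At each successor stage, extending $M_\alpha$ to $M_{\alpha+1}$ must (i) preserve countability and atomicity, (ii) meet density requirements forcing the union to be atomic and of size $\aleph_1$, and (iii) avoid perpetuating any unconstrained type as a limit type. The key structural input is Lemma~\ref{dropunconstrained}: constrained types lift uniquely as non-splitting extensions and so automatically behave well at limits, while unconstrained types split over every finite set and admit many incompatible atomic extensions, providing exactly the flexibility needed to omit their limit realizations.

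The combinatorial heart is a density argument: for every $\P$-name $\dot{p}$ for an unconstrained type in $S_{at}(N)$, the set of conditions forcing $\dot{p}$ not to be a limit type is dense. Given any condition, Lemma~\ref{dropunconstrained}(2) supplies a countable $M_\alpha$ on which $\dot{p}\mr{M_\alpha}$ is unconstrained. Using that $\dot{p}\mr{M_\alpha}$ splits over every finite $F \subseteq M_\alpha$, a splitting-tree diagonalization produces an atomic extension $M_{\alpha+1}$ realizing no potential witness of $\dot{p}\mr{M_\alpha}$, bounding the stages at which $\dot{p}$ can be realized in $N$. Iterating this against all names and density requirements yields a generic $N$ satisfying $\psi$, after which the absoluteness step finishes the argument. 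The main obstacle I expect is the expressibility step: faithfully encoding ``every limit type is constrained'' in $L_{\omega_1,\omega}(Q)$ requires Skolemizing both the finite set witnessing non-splitting and the defining scheme uniformly, so that a single countable infinitary sentence enforces the property on all realized types. Once that encoding is fixed and the density computations for $\P$ are carried out, the absoluteness transfer is essentially routine.
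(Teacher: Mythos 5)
You have the right architecture --- force a good model, encode its existence in $L_{\omega_1,\omega}(Q)$, and pull it back to $\VV$ via Keisler's absoluteness --- but both substantive components have genuine gaps. First, the expressibility step. Your plan Skolemizes a finite non-splitting set and a defining scheme for each \emph{realized} tuple, but the types you must control are limit types over the whole uncountable model $N$, and these need not be realized in $N$; they are only coherent unions of realized types over the countable pieces of a filtration. Knowing that every realized tuple has constrained type over $N$ does not yield that every such union is constrained, so your sentence $\psi$ would not certify the property you need. The paper's solution (Lemma~\ref{connect}) is to recharacterize ``some limit type is unconstrained'' as the existence of an uncountable chain in a tree-like partial order on $(N\times I)/E_f$ built from types over the levels of a scale, and then to witness the \emph{non}-existence of such a chain positively, by a specializing function $H$ into a countable dense order, exactly as for special Aronszajn trees. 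Even then, producing $H$ from the mere absence of an uncountable chain requires a further c.c.c.\ forcing (Lemma~\ref{suslin}), so the absoluteness transfer is applied to the composite extension; a single ``direct'' encoding of the property does not seem available.

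Second, the forcing and its density argument. Your poset of countable elementary chains ordered by end-extension is ($\sigma$-closed-like and) not obviously adequate: for a name $\dot p$ of an unconstrained type, the set of conditions forcing ``$\dot p$ is not a limit type'' need not be dense, because no condition can commit all future end-extensions to omitting $\dot p\mr{M_\alpha}$, and in any case there are far more than $\aleph_1$ names to diagonalize against, so meeting one dense set per name is not a viable bookkeeping scheme. The paper instead uses a \emph{finite}-condition c.c.c.\ forcing whose conditions are striated complete formulas (Definition~\ref{forcedef}), and proves in Proposition~\ref{forcingresult} that \emph{no} condition forces any name to be an unconstrained limit sequence: a $\Delta$-system argument plus the homogeneity of the forcing and the amalgamation lemma for striated formulas (Lemma~\ref{amalgamation}) produce a single condition forcing two contradictory complete formulas about $\tp(x^*_\alpha/\NN_\alpha)$ and $\tp(x^*_\beta/\NN_\alpha)$, contradicting the limit-sequence hypothesis. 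This global reflection argument, rather than per-name density, is the combinatorial heart that your sketch does not supply.
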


Here, we sharpen this result under the additional assumption of $\aleph_1$-categoricity.

\begin{corollary}  \label{constrained=limit}  If $\At$ is $\aleph_1$-categorical and $N\in\At$ has size $\aleph_1$,
then $C_N=\{\hbox{limit types in $S_{at}(N)$}\}$.
\end{corollary}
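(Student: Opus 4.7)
The plan is to establish both inclusions separately. For the easier inclusion that limit types are constrained, I would simply invoke Theorem~\ref{constrainedexists}, which produces a model in $\At$ of size $\aleph_1$ in which every limit type is constrained, and then use $\aleph_1$-categoricity to transfer that property to $N$ via an isomorphism (the property ``every limit type in $S_{at}(\cdot)$ is constrained'' is plainly isomorphism-invariant).

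For the reverse inclusion, fix $p \in C_N$ with finite splitting base $A \subseteq N$ and an arbitrary countable $M \preceq N$; the goal is to realize $p\mr{M}$ in $N$. The first reduction I would make is to enlarge $M$ so that $A \subseteq M$, which is harmless because a realization of $p\mr{M'}$ for $M \preceq M'$ also realizes $p\mr{M}$. Next, I would build a filtration $(M_\alpha:\alpha<\omega_1)$ of $N$ with $M_0 = M$, fix a canonical copy $\tilde M$ of the countable atomic model together with an isomorphism $\sigma: M_0 \to \tilde M$, and set $\tilde p := \sigma(p\mr{M_0}) \in C_{\tilde M}$.

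The bridge to $R_N$ comes from Lemma~\ref{prev2}(2): some $\aleph_1$-sized $N^\dag \in \At$ has $\tilde p \in R_{N^\dag}$. By $\aleph_1$-categoricity $N^\dag \cong N$, and since $R$ is an isomorphism invariant, $\tilde p \in R_N$. This yields a stationary $S \subseteq \omega_1$ such that for every $\beta \in S$ and every isomorphism $\pi: \tilde M \to M_\beta$, $\pi(\tilde p)$ is realized in $N$. The next step, and the crux of the argument, is to pick any $\beta \in S$ and then, via the standard back-and-forth between countable atomic models, choose $\pi: \tilde M \to M_\beta$ extending the finite partial elementary map $\sigma(a) \mapsto a$ for $a \in A$. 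Then $\pi \circ \sigma: M_0 \to M_\beta$ is an isomorphism fixing $A$ pointwise, so $\pi(\tilde p) = (\pi \circ \sigma)(p\mr{M_0})$ is a type in $S_{at}(M_\beta)$ not splitting over $A$ whose restriction to $A$ is $p\mr{A}$; by the uniqueness of non-splitting extensions (Lemma~\ref{dropunconstrained}(1)), $\pi(\tilde p) = p\mr{M_\beta}$. A realization $c \in N$ of $\pi(\tilde p)$ then realizes $p\mr{M_\beta}$, and hence its restriction $p\mr{M_0} = p\mr{M}$.

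The main obstacle is bridging the abstract hypothesis $\tilde p \in R_N$, which only supplies realizations of images $\pi(\tilde p)$ for arbitrary isomorphisms $\pi:\tilde M \to M_\beta$, to the specific type $p\mr{M_\beta}$ one actually wants to realize. The back-and-forth device, arranging $\pi$ so that $\pi \circ \sigma$ fixes $A$ pointwise, is precisely what couples these two through the uniqueness of non-splitting extensions; without that alignment, $\pi(\tilde p)$ would be some other constrained type over $M_\beta$ with no a priori connection to $p\mr{M_\beta}$.
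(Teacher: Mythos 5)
Your argument for the inclusion $\{\hbox{limit types}\}\subseteq C_N$ is the paper's. For the reverse inclusion you take a genuinely different route. The paper does not use $R_N$ here at all: it builds one particular model $N'$ of size $\aleph_1$ with a filtration $(M_\alpha)$ such that $N'$ realizes \emph{every} type in $C_{M_\alpha}$ for every $\alpha$ (possible because $C_{M_\alpha}$ is countable by Theorem~\ref{CM} and each constrained type is realized in a countable elementary extension), checks directly that every $q\in C_{N'}$ is then a limit type, and invokes $\aleph_1$-categoricity to transfer this property to the given $N$. You instead work with $N$ itself, importing realizations through Lemma~\ref{prev2}(2) and the isomorphism-invariance of $R_N$; since you only need to realize the orbit of the single type $\tilde p$ rather than all of $C_{M_\alpha}$, your route does not even need Theorem~\ref{CM}. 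Both are legitimate; the paper's is shorter because it never has to match an abstract realization of $\pi(\tilde p)$ against the concrete type $p\mr{M_\beta}$.

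That matching is where your write-up has a flaw. You justify $\pi(\tilde p)=p\mr{M_\beta}$ by observing that both are types over $M_\beta$ that do not split over $A$ and agree on $A$, and you cite the uniqueness clause of Lemma~\ref{dropunconstrained}(1). That lemma gives uniqueness of the non-splitting extension of a type over a \emph{model} containing $A$ to a larger model; it does not say that a type over $M_\beta$ which is non-splitting over the finite set $A$ is determined by its restriction to $A$, and that stronger claim is false in general (already over $A=\emptyset$ there can be several distinct automorphism-invariant types, all trivially agreeing on $\emptyset$). The step is nevertheless correct, for a reason you should state instead: $\tau:=\pi\circ\sigma$ is a partial elementary map of $N$ (an isomorphism between two elementary submodels of $N$) fixing $A$ pointwise, so $\tp(\tau(\bbar)/A)=\tp(\bbar/A)$ for every $\bbar\in M_0$; since the global type $p\in C_N$ does not split over $A$, this yields $\phi(\xbar,\bbar)\in p\Leftrightarrow\phi(\xbar,\tau(\bbar))\in p$ for all such $\bbar$, which says precisely that $\tau(p\mr{M_0})=p\mr{M_\beta}$. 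With that one-line repair your proof is complete.
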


\begin{proof}  The hard direction of the equality is Theorem~\ref{constrainedexists}.   For the converse, first note that if every $p\in C_N$ is realized
in $N$, then each such $p$ is a limit type and we are done.  So, assume there is some $p\in C_N$ that is not realized in $N$.
By Lemma~\ref{dropunconstrained}(1),  $C_M$ contains a non-algebraic type for some countable $M\preceq N$.
%Suppose $p$ does not split over some finite $A\subseteq N$.  Then $p\mr{M}\in C_M$  for any countable $M\preceq N$ containing $A$.
By the uniqueness of countable atomic models, it follows that $C_M$ contains a non-algebraic type for every countable, atomic $M$.
As well, by Theorem~\ref{CM}, $C_M$ is countable for every such $M$.  Thus, every countable atomic $M$ has a proper, countable elementary extension
$M'\succ M$ containing a realization of every $p\in C_M$.  Iterating this $\omega_1$ times, we obtain a model $N=\bigcup\{M_\alpha:\alpha\in\omega_1\}$
of size $\aleph_1$ such that $N$ realizes  every $p\in C_{M_\alpha}$ for every $\alpha\in \omega_1$.  It follows that for this $N$, every $q\in C_N$ is a limit type.
Indeed, suppose $q$ does not split over $A$.  Given any countable $M\preceq N$, choose
 $\alpha$ such that $M\cup A\subseteq M_\alpha$.  By  Lemma~\ref{dropunconstrained}(1)
 $q\mr{M_\alpha}\in C_{M_{\alpha}}$,  hence both it and therefore
$q\mr{M}$, are realized in $N$.  Finally, since $\At$ is $\aleph_1$-categorical, every $N$ of size $\aleph_1$ has this property.
\end{proof}

%The only other definition we require has little to do with atomic models. But
%significantly it holds in $At$: since when $N^{**}$ is atomic, every $p\in
%S(M)$ that is
%  realized in $N^{**}$ is necessarily in $S_{at}(M)$.

\subsection{Characterizing $\omega$-stability}\label{charwstb}
  In this Subsection, we first derive Lemma~\ref{omegastable}
that gives three consequences of $\omega$-stability in terms of the behavior
of constrained types.  Then, taking Theorem~\ref{constrainedexists} as a
black box (proved in Section~\ref{secthree}), Lemma~\ref{omegastbchar} shows
that each of these conditions is equivalent to $\omega$-stability under the
assumption of $\aleph_1$-catgoricity. Finally, Theorem~\ref{largemodel}
asserts that the existence of a model in $\beth_1^+$ and
$\aleph_1$-categoricity implies condition 1) of Lemma~\ref{omegastbchar} and
thus $\omega$-stability.

%{\color{red} In this Subsection, we first derive, Lemma~\ref{omegastable},
%three consequences for constrained types of $\omega$-stability.  Then we show
% using both easy applications of
%$\aleph_1$-categoricity and Corollary~\ref{constrained=limit},
%Lemma~\ref{omegastbchar}: the conditions are equivalent. And indeed with
%these assumptions they are equivalent to $\omega$-stability. Finally,
%Theorem~\ref{largemodel} asserts that the existence of a model in $\beth_1^+$
%and $\aleph_1$-categoricity implies condition 1) of Lemma~\ref{omegastbchar}
%and thus $\omega$-stability.} We begin by describing two species of pairs of
%models.

\begin{definition}  {\em
\begin{itemize}
\item  A {\em proper constrained pair} is a  pair $N \neq N'$ of atomic
    models such that $\tp(\cbar/N)$ is constrained for every tuple
    $\cbar\in N'$.
\item A  {\em proper relatively $\aleph_1$-saturated pair} is a proper pair
    $N\neq N'$ such that, for every countable $M\preceq N$, every type
    $p\in S(M)$
realized in $N'$ is realized in $N$.
\end{itemize}
}
\end{definition}

Note that in (2), both models must be uncountable, whereas (1) makes sense for countable models as well.
Of course, in (2) it would be equivalent to say that `every type over every
countable set $A\subseteq N$ that is realized in $N'$ is realized in
$N$,' but we choose the definition above to conform with our convention
about only looking at
 types over models.

 \begin{lemma}  \label{transitive}  Let $\At$ be any atomic class.
 \begin{enumerate}
 \item  If both $(M,M')$ and $(M',M'')$ are constrained pairs, then $(M,M'')$ is a constrained pair as well.
 \item  If $(M,M')$ is a constrained pair of countable atomic models, then there is an uncountable $N$ with a filtration
 $(M_\alpha:\alpha\in\omega_1)$ such that $(M_\alpha,N)$ is a constrained pair for every $\alpha\in\omega_1$.
 \end{enumerate}
 \end{lemma}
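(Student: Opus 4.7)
\medskip
\noindent\textbf{Proof plan.} For part (1), I would fix any tuple $\cbar \in M''$. Since $(M', M'')$ is constrained, pick a finite tuple $\abar \subseteq M'$ over which $\tp(\cbar/M')$ does not split; since $(M, M')$ is constrained, pick a finite $F \subseteq M$ over which $\tp(\abar/M)$ does not split. The claim is that $\tp(\cbar/M)$ does not split over this same $F$. Given $\bbar, \bbar' \in M$ with $\tp(\bbar/F) = \tp(\bbar'/F)$, non-splitting of $\tp(\abar/M)$ over $F$ yields $\tp(\abar\bbar/F) = \tp(\abar\bbar'/F)$, and in particular $\tp(\bbar/\abar) = \tp(\bbar'/\abar)$; then non-splitting of $\tp(\cbar/M')$ over $\abar$ gives $\phi(\xbar, \bbar) \in \tp(\cbar/M') \Leftrightarrow \phi(\xbar, \bbar') \in \tp(\cbar/M')$ for every $\phi$. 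Restricting the parameters to $M$ finishes the claim.

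For part (2), I would build the filtration $(M_\alpha : \alpha < \omega_1)$ by transfinite recursion, starting with $M_0 := M$ and $M_1 := M'$. At a successor stage $\alpha+1$, uniqueness of the countable atomic model provides an isomorphism $h_\alpha : M \to M_\alpha$; transporting the structure of $M'$ along $h_\alpha$ yields a countable atomic $M_{\alpha+1} \supsetneq M_\alpha$ together with an isomorphism $M' \to M_{\alpha+1}$ extending $h_\alpha$. By construction $(M_\alpha, M_{\alpha+1}) \cong (M, M')$, hence it is itself a proper constrained pair, and the isomorphism shows $M_\alpha \preceq M_{\alpha+1}$ inside $\At$. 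At limit stages I take unions, which preserve countability, atomicity, and the elementary chain property. Set $N := \bigcup_{\alpha < \omega_1} M_\alpha$, which is uncountable because each successor extension is proper.

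To finish, I must verify that $(M_\alpha, N)$ is a proper constrained pair for every $\alpha < \omega_1$. My plan is to induct on $\beta \geq \alpha$ to show $(M_\alpha, M_\beta)$ is constrained. At successors, this is exactly part (1) applied to $(M_\alpha, M_\beta)$ and $(M_\beta, M_{\beta+1})$. At limits $\beta$, any tuple of $M_\beta$ lies in some $M_\gamma$ with $\alpha \leq \gamma < \beta$, and $(M_\alpha, M_\gamma)$ is constrained by induction, so $\tp(\cbar/M_\alpha)$ does not split over some finite subset of $M_\alpha$. Since every tuple of $N$ lies in some $M_\beta$, the pair $(M_\alpha, N)$ is constrained as required. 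I do not anticipate a serious obstacle; the only real care is in the successor-stage transport step, which is routine once one invokes uniqueness of the countable atomic model, and in the bookkeeping of the induction on $\beta$.
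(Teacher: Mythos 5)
Your proposal is correct and follows essentially the same route as the paper: part (1) is the same two-step non-splitting composition (the paper phrases it as $\tp(\cbar\bbar/M)$ not splitting over $\abar$, you phrase it directly for $\tp(\cbar/M)$ over $F$, which is equivalent for the purpose at hand), and part (2) is the same iteration of countable constrained pairs along an $\omega_1$-chain, with $(M_\alpha,N)$ constrained deduced from (1); you merely make explicit the transport-by-isomorphism at successors and the limit-stage bookkeeping that the paper leaves implicit.
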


 \begin{proof}  (1)  Choose any $\cbar\in M''$.  As $(M',M'')$ is a constrained pair, choose $\bbar\in M'$ such that
 $\tp(\cbar/M')$ does not split over $\bbar$.  As $(M,M')$ is a constrained pair, choose $\abar\in M$ such that $\tp(\bbar/M)$ does
 not split over $\abar$.  We claim that $\tp(\cbar\bbar/M)$ does not split over $\abar$, which clearly suffices.
 To see this, choose any $\mbar_1,\mbar_2$ from $M$ such that $\tp(\mbar_1\abar)=\tp(\mbar_2\abar)$.
 By non-splitting, this implies $\tp(\mbar_1\abar\bbar)=\tp(\mbar_2\abar\bbar)$.  Now both $\mbar_1\abar$ and $\mbar_2\abar$ are from $M'$,
 hence $\tp(\mbar_1\abar\bbar\cbar)=\tp(\mbar_2\abar\bbar\cbar)$ as $\tp(\cbar/M')$ does not split over $\bbar$.

 (2)  As $M$ is a countable atomic model that is the lower part of a constrained pair, so is any other countable, atomic model.
 Thus, we can form a continuous, increasing chain $(M_\alpha:\alpha\in\omega_1)$ of countable atomic models with $(M_\alpha,M_{\alpha+1})$
 a constrained pair for each $\alpha$.  This chain is a filtration of the atomic $N:=\bigcup\{M_\alpha:\alpha\in\omega_1\}$.  That each $(M_\alpha,N)$
 is a constrained pair follows from (1).
 \end{proof}

We record the following consequences of $\omega$-stability in atomic classes.  It is
noteworthy that $\aleph_1$-categoricity plays no role in
Lemma~\ref{omegastable}, and without additional assumptions, none of these imply $\omega$-stability.
However, following this, with Theorem~\ref{omegastbchar} we see that when coupled with $\aleph_1$-categoricity, each of these conditions implies $\omega$-stability.

\begin{lemma} \label{omegastable} Suppose $\At$ is an $\omega$-stable atomic class that admits an uncountable atomic model.
Then:
\begin{enumerate}
\item  $\At$ has only constrained types;
\item  $\At$ has a proper constrained pair; and
\item  $\At$ has a proper, relatively $\aleph_1$-saturated pair.
\end{enumerate}
\end{lemma}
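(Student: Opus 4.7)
The plan is to prove the three conclusions in order. Conclusion (1) reduces to a standard tree-of-splittings argument, (2) is nearly immediate from (1), and (3) is the main work.

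For (1), I argue by contradiction. If $\At$ had any unconstrained type, Lemma~\ref{dropunconstrained}(3) would yield a countable atomic $M$ with an unconstrained $p \in S_{at}(M)$. Enumerating $M = \{m_n : n < \omega\}$ and putting $F_n := \{m_0,\dots,m_{n-1}\}$, the splitting of $p$ over each $F_n$, combined with the $\omega$-homogeneity of the countable atomic model, lets me inductively build a binary tree $(\Sigma_\sigma : \sigma \in 2^{<\omega})$ of partial types, each the restriction of an automorphic image of $p$ to a finite subset of $M$, with $\Sigma_{\sigma 0}$ and $\Sigma_{\sigma 1}$ contradicting one another. Each branch completes to a type in the $\mathrm{Aut}(M)$-orbit of $p$, and so lies in $S_{at}(M)$, producing $2^{\aleph_0}$ distinct elements of $S_{at}(M)$ and contradicting $\omega$-stability.

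For (2), the assumed uncountable $N^* \in \At$ together with downward L\"owenheim--Skolem inside $N^*$ provides a countable atomic proper elementary extension $M \prec N$ of the countable atomic model $M$. By (1) every $\tp(\cbar / M)$ with $\cbar \in N$ is constrained, so $(M, N)$ is a proper constrained pair.

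For (3), I construct an atomic $N$ of size $\aleph_1$ that realizes every type over every countable elementary submodel, and then obtain a proper atomic extension $N' \succ N$. Build a continuous elementary chain $(M_\alpha : \alpha < \omega_1)$ of countable atomic models with $M_{\alpha+1}$ realizing every $p \in S_{at}(M_\alpha)$: because $\omega$-stability makes $S_{at}(M_\alpha)$ countable, enumerate it as $\{p_n : n < \omega\}$ and carry out an inner $\omega$-step construction of countable atomic $N_0 \prec N_1 \prec \cdots$ with $N_0 = M_\alpha$ and $p_n$ realized in $N_{n+1}$; at each stage (1) says $p_n$ is constrained, Lemma~\ref{dropunconstrained}(1) lifts $p_n$ to its unique non-splitting extension $q_n \in S_{at}(N_n)$, and the construction in Lemma~\ref{alliso}(2) supplies a countable atomic $N_{n+1} \succ N_n$ realizing $q_n$. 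Put $M_{\alpha+1} := \bigcup_n N_n$ and $N := \bigcup_{\alpha < \omega_1} M_\alpha$. Any countable $M \preceq N$ lies in some $M_\alpha$, and the non-splitting-extension argument shows each $p \in S_{at}(M)$ is realized in $M_{\alpha+1} \subseteq N$. For $N'$, fix a non-algebraic $q \in S_{at}(M_0)$ (which exists since $M_0$ has a proper atomic elementary extension by Vaught), lift by non-splitting to $\tilde q \in S_{at}(N)$, and realize $\tilde q$ in an atomic $N' \succ N$; then $N \neq N'$, and for every $\cbar \in N'$ and every countable $M \preceq N$, $\tp(\cbar / M) \in S_{at}(M)$ is realized in $N$ by construction. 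I expect the main obstacle to be the inner $\omega$-step: verifying that a realization of a constrained type over a countable atomic model lies in a countable atomic elementary extension without leaving $\At$.
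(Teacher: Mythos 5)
Your proof of (1) takes a genuinely different route from the paper. The paper gets (1) in one line from the splitting rank of \cite[Theorem 19.8]{Baldwincatmon}: choosing a complete formula $\phi(x,\abar)\in p$ of least rank, $p$ does not split over $\abar$. Your tree-of-splittings argument is the other standard proof (essentially \cite[Lemma 20.8]{Baldwincatmon}, which the paper alludes to in its earlier Remark), and it is more self-contained since it avoids developing the rank. The one point you must not gloss over is that $S_{at}(M)$ is \emph{not} compact, so an increasing union of finite partial atomic types need not extend to an element of $S_{at}(M)$. Your fix --- that each branch is realized by a single automorphic image of $p$ --- is exactly right, but it only works if the automorphisms are chosen coherently along branches: at the successor step you should take $g_{\sigma 1}=\tau\circ g_{\sigma}$ for an automorphism $\tau$ fixing the current finite parameter set pointwise (using homogeneity of $M$ and the fact that $g_\sigma(p)$ still splits over every finite set), so that along each branch the maps $g_{\xi\restriction n}$ stabilize on larger and larger finite sets and their limit is a genuine automorphism $h_\xi$ with $h_\xi(p)$ extending the branch. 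With that bookkeeping made explicit, (1) is correct. Your (2) is the paper's proof verbatim, and your (3) follows the paper's outline (atomically saturated chain of length $\omega_1$, then a proper extension).

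There is, however, a gap at the very last step of (3): ``realize $\tilde q$ in an atomic $N'\succ N$'' where $|N|=\aleph_1$. This is not routine. A realization $c$ of $\tilde q$ gives an atomic \emph{set} $N\cup\{c\}$, but an uncountable atomic set need not sit inside an atomic \emph{model}; producing the elementary extension $N'\in\At$ requires amalgamating the countable extensions $M_\alpha c$ coherently over the chain $(M_\alpha)$, which is where $\omega$-stability does real work. This is precisely the fact the paper imports by citation (``every $\omega$-stable atomic model of size $\aleph_1$ has a proper atomic elementary extension,'' see the proof of 19.26 in \cite{Baldwincatmon}); you assert it for free. Relatedly, you flag the wrong step as the expected obstacle: realizing a type from $S_{at}(M)$ for \emph{countable} $M$ inside a countable atomic elementary extension is the easy, always-available Henkin step (used already in Lemma~\ref{alliso}(2)); the uncountable step is the one that needs either the citation or an actual argument.
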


\begin{proof} (1)  For an $\omega$-stable atomic class, one can define
(\cite[Definition 19.1]{Baldwincatmon})
 a splitting rank on types $p\in S_{at}(N)$ for any model $N$ such that (\cite[Theorem 19.8]{Baldwincatmon}):
for any atomic model $N$ and any $p\in S_{at}(N)$, then choosing
$\phi(x,\abar)\in p$ to be a complete formula of smallest rank,  $p$ does not
split over $\abar$.  That is, $p$ is constrained.

(2)  Choose any countable, atomic model $M$.  Since $\At$ admits an
uncountable atomic model, there is a countable, proper, atomic elementary
extension $M'\succ M$. By (1), $\tp(c/M)$ is constrained for every $c\in M'$,
hence $(M,M')$ is a proper constrained pair.

(3)  We first argue that there is an {\em atomically saturated} model $N$ of
size $\aleph_1$.  That is, for every countable $M\preceq N$, $N$ realizes
every $p\in S_{at}(M)$. The existence of an uncountable, atomically saturated
$N$ is easy; build a  union of a continuous elementary chain $
(M_\alpha:\alpha\in\omega_1)$ of countable atomic models with the
property that  for each $\alpha<\omega_1$, $M_{\alpha+1}$ realizes
 every $p\in S_{at}(M_\alpha)$.  The existence of such an $M_{\alpha+1}$ is immediate since $S_{at}(M_\alpha)$
is countable and  every $p\in S_{at}(M_\alpha)$ can be realized in some
countable, atomic elementary extension.

Now, given an atomically saturated model $N$ of size $\aleph_1$, recall
that if $\At$ is $\omega$-stable, then every model of size $\aleph_1$ has a
 proper atomic extension $N'$, see e.g., the proof of 19.26 of \cite{Baldwincatmon}.  But then $(N,N')$ is a proper, relatively $\aleph_1$-saturated pair.
\end{proof}

%We extract more specific information from Vaught's proof \cite[Theorem
%5.4]{Vaught} that a non-minimal countable atomic model has an uncountable
%atomic extension.
%
%\begin{fact}\label{strV} Suppose $M_0 \precneqq M_1$ are countable atomic models and $p_0\in S(M)$
%is realized in $M_1$. Then there is a chain
%$(M_\alpha:\alpha\in\omega_1)$ of countable atomic models a limit type
%$p^* \in S(\bigcup M_\alpha)$ with $p^*\myrestriction M_0 = p$.
%
%To see this, let $f_\alpha$  map  $M_\alpha$ isomorphically  to
%$M_{\alpha+1}$ to create $M_{\alpha+2}$ and let $p_{\alpha +1} =
%f(p_\alpha)$.
%
%\end{fact}

Now, given Theorem~\ref{CM} and  Corollary~\ref{constrained=limit} (the latter depending on the promised
Theorem~\ref{constrainedexists}) we give short proofs of our main results.
The key idea is that from Theorem~\ref{constrainedexists}, we know there is a
constrained model in $\aleph_1$; while from 1) and not 2) we find an
unconstrained one.

\begin{theorem}\label{omegastbchar}  The following are equivalent for an
$\aleph_1$-categorical atomic class $\At$.
\begin{enumerate}
\item  $\At$ has a proper, relatively $\aleph_1$-saturated pair;
\item  $\At$ has a proper constrained pair;
\item  $\At$ has only constrained types; and
\item  $\At$ is $\omega$-stable.
\end{enumerate}
\end{theorem}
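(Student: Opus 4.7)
The plan is to close the cycle of implications. The direction $(4) \Rightarrow (1), (2), (3)$ is already Lemma~\ref{omegastable}, which requires no categoricity. So I need to establish $(3) \Rightarrow (4)$, $(1) \Rightarrow (2)$, and $(2) \Rightarrow (3)$.

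The implication $(3) \Rightarrow (4)$ is immediate: by Theorem~\ref{CM}, $C_M$ is countable under $\aleph_1$-categoricity, and condition (3) gives $S_{at}(M) = C_M$, so $S_{at}(M)$ is countable, meaning $\At$ is $\omega$-stable. For $(1) \Rightarrow (2)$, given a proper relatively $\aleph_1$-saturated pair $(N, N')$, I first reduce to the case $|N| = |N'| = \aleph_1$ by a back-and-forth construction of an $\aleph_1$-sized subpair that inherits the relative saturation property. For any $c \in N' \setminus N$, relative saturation means $\tp(c/M)$ is realized in $N$ for every countable $M \preceq N$; this is exactly the definition of $\tp(c/N)$ being a limit type. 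Applying Corollary~\ref{constrained=limit} (whose proof depends on Theorem~\ref{constrainedexists} and $\aleph_1$-categoricity), $\tp(c/N)$ is constrained. Hence $(N, N')$ is a proper constrained pair---matching the paper's hint that ``from 1) and not 2) we find an unconstrained'' limit type, contradicting Corollary~\ref{constrained=limit}.

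For $(2) \Rightarrow (3)$, which is the main step, I would begin with a proper constrained pair and use Lemma~\ref{transitive}(2) to obtain an atomic $N$ of size $\aleph_1$ with a filtration $(M_\alpha)_{\alpha < \omega_1}$ such that $(M_\alpha, N)$ is a constrained pair for every $\alpha$, with $M_0$ a copy of the countable atomic model. Consequently every $c \in N$ has $\tp(c/M_0) \in C_{M_0}$, and by Theorem~\ref{CM} the set $C_{M_0}$ is countable; hence $N$ realizes at most countably many types over $M_0$. By Lemma~\ref{dropunconstrained}(3), condition (3) is equivalent to $S_{at}(M_0) = C_{M_0}$, so I need every $p \in S_{at}(M_0)$ to be constrained. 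The main obstacle is the following: given unconstrained $p \in S_{at}(M_0)$, one can realize it in some $\aleph_1$-atomic $N_p$, and $\aleph_1$-categoricity gives $N_p \cong N$, but the isomorphism need not fix the specified copy of $M_0$, so one cannot immediately conclude that $p$ is constrained from the constrained pair property of $(M_0, N)$. To overcome this, I would refine the construction of $N$ to realize every $q \in C_{M_\alpha}$ at each filtration stage (possible since $|C_{M_\alpha}| \le \aleph_0$), thus producing a model that is ``saturated for constrained types''. This enrichment gives a proper relatively $\aleph_1$-saturated pair $(N,N^*)$ for a suitably built extension $N^*$, so in particular $(2) \Rightarrow (1)$; then the relative saturation transfers every realized type in such an extension back into $N$, forcing any $p \in S_{at}(M_0)$ realized anywhere to be realized in $N$ and hence constrained---contradicting the assumption of unconstrained $p$.
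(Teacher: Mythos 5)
Your $(3)\Rightarrow(4)$ matches the paper, and your $(1)\Rightarrow(2)$ is a legitimate variant: the paper instead builds a countable elementary chain inside $M^*$ whose union of size $\aleph_1$ carries an unconstrained limit type, which avoids your asserted-but-unproved reduction to an $\aleph_1$-sized relatively saturated subpair (that reduction needs care, since a model of size $\aleph_1$ may have $2^{\aleph_0}>\aleph_1$ countable elementary submodels, and one must observe that it suffices to verify saturation along a single filtration). The genuine gap is in $(2)\Rightarrow(3)$. You correctly identify the obstacle---the isomorphism $N_p\cong N$ supplied by $\aleph_1$-categoricity need not respect the chosen copy of $M_0$---but your fix does not overcome it. Realizing every $q\in C_{M_\alpha}$ at each stage only makes $N$ saturated for \emph{constrained} types; an unconstrained $p\in S_{at}(M_0)$ is exactly the type this construction never touches, so there is no reason a ``suitably built'' $N^*$ realizes $p$, and hence no reason relative saturation of $(N,N^*)$ pulls $p$ into $N$. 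The concluding step ``forcing any $p\in S_{at}(M_0)$ realized anywhere to be realized in $N$'' is a non sequitur: relative $\aleph_1$-saturation only transfers types realized in $N^*$, not types realized in arbitrary atomic extensions (and if it transferred all of $S_{at}(M_0)$ you would be asserting $\omega$-stability outright, impossible on cardinality grounds when $|S_{at}(M_0)|=2^{\aleph_0}>\aleph_1=|N|$). Deriving $(2)\Rightarrow(1)$ en route and then citing $(1)\Rightarrow(2)$ merely returns you to hypothesis (2).

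The missing idea, which is how the paper resolves the obstacle, is to build a \emph{second} model rather than to enrich the first: assuming (3) fails, Lemma~\ref{dropunconstrained}(3) gives an unconstrained type over every countable atomic model, so one constructs $N'=\bigcup\{M_\alpha':\alpha\in\omega_1\}$ with $M_{\alpha+1}'$ realizing an unconstrained type of $S_{at}(M_\alpha')$ for every $\alpha$; then $(M_\alpha',N')$ is never a constrained pair. An isomorphism $f:N\rightarrow N'$ provided by $\aleph_1$-categoricity must map $M_\alpha$ onto $M_\alpha'$ for club many $\alpha$ (any two filtrations agree on a club), and for such $\alpha$ it carries the constrained pair $(M_\alpha,N)$ onto the non-constrained pair $(M_\alpha',N')$, a contradiction. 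The point is not to pin down a particular countable submodel but to exploit that ``being a constrained pair'' is an isomorphism invariant that holds, or fails, on a club of filtration levels.
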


\begin{proof}  We will show $(1)\Rightarrow(2)\Rightarrow(3)\Rightarrow(4)$, which in light of Lemma~\ref{omegastable} suffices.

$(1)\Rightarrow(2):$  Suppose $(M^*,M^{**})$ is a proper, relatively
$\aleph_1$-saturated pair of atomic models, and by way of contradiction
suppose that $(M^*,M^{**})$ is not a proper constrained pair.  Choose $c\in M^{**}$
such that
 $p:=\tp(c/M^*)$ is unconstrained.
Then, by iterating Lemma~\ref{dropunconstrained}(2), we construct a continuous,
 elementary chain $(M_{\alpha}:\alpha\in\omega_1)$ of countable, elementary substructures
of $M^*$ such that, for every $\alpha\in \omega_1$, $p\mr{M_{\alpha}}$ is
unconstrained, but is realized in $M_{\alpha+1}$. To accomplish this, by
Lemma~\ref{dropunconstrained}(2), choose a countable $M_0\preceq M^*$ such
that $p\mr{M_0}$ is unconstrained.  At countable limits, take unions.
Finally, given a countable $M_\alpha\preceq M^*$, by relative
$\aleph_1$-saturation choose $c_\alpha\in M^*$ realizing $p\mr{M_\alpha}$ and
then apply Lemma~\ref{dropunconstrained}(2) to the set
$M_\alpha\cup\{c_\alpha\}$ to get $M_{\alpha+1}\preceq M^*$ with
$p\mr{M_{\alpha+1}}$ unconstrained.  Let
$N:=\bigcup\{M_\alpha:\alpha\in\omega_1\}$. Then $N$ has size $\aleph_1$ and
the type $p\mr{N}$ is an unconstrained limit type, contradicting
$\aleph_1$-categoricity by Corollary~\ref{constrained=limit}.
%The unconstrained type $p\mr{N}$
%witnesses $N:=\bigcup\{M_\alpha:\alpha\in\omega_1\}$ is atomic of size
%$\aleph_1$ and is unconstrained . But, coupled with $\aleph_1$-categoricity,
%this contradicts Theorem~\ref{constrainedexists}.

$(2)\Rightarrow(3):$  Assume that  $(N^*,N^{**})$ is a
proper constrained pair (of any cardinality).
 By an easy L\"owenheim-Skolem
argument (in the pair language) there is a proper constrained pair $(M,M')$   of countable atomic models.
By Lemma~\ref{transitive}(2), there is an atomic model $N$ of size $\aleph_1$ with a filtration $(M_\alpha:\alpha\in\omega_1)$
such that $(M_\alpha,N)$ is a constrained pair for every $\alpha\in\omega_1$.

Now, by way of contradiction, assume (3) fails.
By Lemma~\ref{dropunconstrained}(3), $S_{at}(M)$ contains an unconstrained type for every countable atomic model $M$.
Thus, for any such $M$, there is a countable atomic $M'\succ M$ containing a realization of an unconstrained type.
By iterating this $\omega_1$ times, we construct a continuous, elementary chain $(M_\alpha':\alpha\in\omega_1)$ for
which $M_{\alpha+1}'$ contains a realization of an unconstrained type in $S_{at}(M_{\alpha}')$.
Let $N':=\bigcup\{M_\alpha':\alpha\in\omega_1\}$.  Note that $(M_\alpha',N')$ is never a constrained pair.
But this contradicts $\aleph_1$-categoricity:  If $f:N\rightarrow N'$ were an isomorphism, then there would be (club many)
$\alpha\in \omega_1$ such that $f\mr{M_\alpha}$ maps $M_\alpha$ onto $M_\alpha'$, hence maps the pair $(M_\alpha,N)$ onto $(M_\alpha',N')$.
As the former is a constrained pair, while the latter is not, we obtain a contradiction.

 $(3)\Rightarrow(4):$  Assume $\At$ has only constrained types and let $M$ be any countable, atomic model.  This means that $S_{at}(M)=C_M$.
 However, as $\At$ is $\aleph_1$-categorical, $C_M$ is countable by Theorem~\ref{CM}.  Thus, $S_{at}(M)$ is countable, which is the definition of $\At$ being $\omega$-stable.
\end{proof}

With this result in hand, it is easy to deduce the main theorem.
This is the {\em only} use of the existence of a model in $\beth_1^+$.   We
imitate the classical proof that every $\kappa\ge |L|$, every $L$-theory with
an infinite model has a $\kappa^+$-saturated model of size $2^\kappa$, to
prove clause 1) of Lemma~\ref{omegastbchar} and thus deduce
$\omega$-stability.

\begin{theorem}  \label{largemodel}  If an atomic class $\At$ is $\aleph_1$-categorical
and has a model of size $(2^{\aleph_0})^+$, then $\At$ is $\omega$-stable.
\end{theorem}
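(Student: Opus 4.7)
The plan is to establish clause (1) of Theorem~\ref{omegastbchar}, namely that $\At$ has a proper, relatively $\aleph_1$-saturated pair; once this is in hand, the equivalence (1)$\Leftrightarrow$(4) already proved (which is where $\aleph_1$-categoricity is consumed) immediately yields $\omega$-stability. The hypothesis of a model in $\beth_1^+ = (2^{\aleph_0})^+$ is used \emph{only} to produce this pair, via a direct adaptation of the classical construction of a $\kappa^+$-saturated model of size $2^\kappa$ (with $\kappa = \aleph_0$), carried out \emph{inside} the given large atomic model.

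Concretely, fix an atomic $M^{**} \in \At$ of size $(2^{\aleph_0})^+$, which exists by hypothesis. I would build a continuous elementary chain $(M_\alpha : \alpha < \omega_1)$ with each $M_\alpha \preceq M^{**}$ and $|M_\alpha| \leq 2^{\aleph_0}$: start with any countable $M_0 \preceq M^{**}$; take unions at limits; at a successor stage, apply (downward) L\"owenheim--Skolem inside $M^{**}$ to obtain $M_{\alpha+1} \preceq M^{**}$ that contains, for every countable $M \preceq M_\alpha$ and every type $p \in S(M)$ \emph{already realized in $M^{**}$}, a realization of $p$. The cardinality bound survives the successor step because $M_\alpha$ has at most $(2^{\aleph_0})^{\aleph_0} = 2^{\aleph_0}$ countable subsets and each admits at most $2^{\aleph_0}$ types, so only $2^{\aleph_0}$ new witnesses are required, all of which live in $M^{**}$ by construction. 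Put $M^* := \bigcup_{\alpha<\omega_1} M_\alpha$; then $|M^*| \leq 2^{\aleph_0} < |M^{**}|$, so $M^* \neq M^{**}$.

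The pair $(M^*, M^{**})$ is the required one. Both models are atomic, since elementary submodels of an atomic model are atomic, and the pair is proper by the strict cardinality gap. For relative $\aleph_1$-saturation: given any countable $M \preceq M^*$, the cofinality of $\omega_1$ forces $M \subseteq M_\alpha$ for some $\alpha < \omega_1$, and by construction every $p \in S(M)$ realized in $M^{**}$ is realized in $M_{\alpha+1} \subseteq M^*$. Thus clause (1) of Theorem~\ref{omegastbchar} holds, and the implication (1)$\Rightarrow$(4) already proved there yields that $\At$ is $\omega$-stable.

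There is no genuine obstacle here; the argument is a routine ``internal saturation'' construction. The only delicate point is the cardinal bookkeeping at the successor step, and that is precisely what dictates the hypothesis $|M^{**}| \geq (2^{\aleph_0})^+$: one needs the ambient model to be \emph{strictly larger} than the $2^{\aleph_0}$ witnesses being absorbed, so that $M^* \neq M^{**}$ at the end and the resulting pair is proper.
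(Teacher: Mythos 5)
Your proposal is correct and is essentially identical to the paper's own argument: both build a continuous $\omega_1$-chain of elementary substructures of $M^{**}$ of size at most $2^{\aleph_0}$, absorbing at successor stages realizations of all types over countable submodels that are realized in $M^{**}$, and then invoke the implication $(1)\Rightarrow(4)$ of Theorem~\ref{omegastbchar}. The cardinal bookkeeping you highlight is exactly the point the paper makes about where the hypothesis of a model of size $(2^{\aleph_0})^+$ is used.
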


\begin{proof}  Let $M^{**}$ be an atomic model of size $(2^{\aleph_0})^+$.
We construct a  relatively $\aleph_1$-saturated elementary substructure
$M^*\preceq M^{**}$ of size $2^{\aleph_0}$ as the union of a continuous chain
$(N_\alpha:\alpha\in\omega_1)$ of elementary substructures of
$M^{**}$, each of size $2^{\aleph_0}$, where, for each $\alpha<\omega_1$ and
each of the $2^{\aleph_0}$ countable $M\preceq N_{\alpha}$, $N_{\alpha+1}$
realizes each of the at most $2^{\aleph_0}$ $p\in S(M)$ that is realized in
$M^{**}$. $\omega$-stability is immediate from $(1)\Rightarrow(4)$ in
Lemma~\ref{omegastbchar}.
\end{proof}

\section{Paying our debt}\label{secthree}
%{Proof of Theorem~\ref{constrainedexists}}

The whole of this section is aimed at proving
Theorem~\ref{constrainedexists}:  If an $L_{\omega_1,\omega}$ sentence has an
uncountable model it has one in which every limit type is constrained.
 The proof relies heavily on Keisler's completeness theorem  that implies
 `model existence' of sentences
of $L_{\omega_1,\omega}(Q)$ is absolute between forcing extensions. In the
first subsection, we explicitly give an $L_{\omega_1,\omega}(Q)$ sentence
$\Psi^*$ in a countable language extending the language of $\At$ such that in
any set-theoretic universe, $\Psi^*$ has a model of size $\aleph_1$ if and
only if there is an atomic model of
size $\aleph_1$ with every limit type constrained.

The second subsection describes   family of striated formulas
(\cite{BLSmanymod}).
 Such formulas are used to describe a c.c.c.\ forcing notion $(\P,\le)$ in the third subsection.  There, we prove that $(\P,\le)$ forces the existence of an atomic of size $\aleph_1$
 with every limit type constrained.   Thus, we conclude that $\Psi^*$ has a model of size $\aleph_1$ in a c.c.c.\ forcing extension, so by the absoluteness described above,
$\V$ has a model of $\Psi^*$ of size $\aleph_1$, yielding our requested model.

\subsection{Finding a requisite sentence $\Psi^*$ of  $L_{\omega_1,\omega}(Q)$}
\numberwithin{theorem}{subsection} \setcounter{theorem}{0}

This subsection is devoted to proving the following Proposition.

\begin{proposition}  \label{main} Let $T$ be  a first order $L$-theory for a
 countable language and $N$ an
atomic model  of $T$ with $|N| =\aleph_1$. There is a sentence $\Psi^*\in
(L^*)_{\omega_1,\omega}(Q)$ in
an expanded (but still countable) language $L^*\supseteq L$ such that:\\
$(\star)$ There is a model $N^*\models \Psi^*$ with $|N|=\aleph_1$ with a
 definable linear order $(I,\le_I)$ of cofinality $\aleph_1$ if and only if
 the $L$-reduct $N\in\At$ has every limit type constrained.
\end{proposition}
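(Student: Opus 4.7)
The plan is to enrich $L$ with symbols that encode, directly in the model, both the filtration and the non-splitting witnesses. Let $L^*=L\cup\{I,\leq_I,E\}\cup\{f_\phi:\phi\in L\}\cup\{G_n:n<\omega\}$, where $I$ is a unary predicate with a binary ordering $\leq_I$, $E(i,x)$ encodes ``$x\in M_i$,'' the $f_\phi$ are Skolem functions enforcing Tarski--Vaught, and $G_n(c,i)$ returns a finite $n$-tuple in $M_i$ intended to witness that $\tp(c/M_i)$ does not split over it. The sentence $\Psi^*\in (L^*)_{\omega_1,\omega}(Q)$ will be the conjunction of: (a) a Scott-type sentence forcing the $L$-reduct to be atomic for $T$; (b) $(I,\leq_I)$ is a linear order and $Qx\,I(x)$; (c) for each $i\in I$, $M_i:=\{x:E(i,x)\}$ is countable ($\neg Qx\,E(i,x)$), is $L$-elementary in the universe, and the chain $(M_i)_{i\in I}$ is continuous, increasing, and covers the universe; (d) for each $L$-formula $\phi(x,\bar y)$, the universal statement ``for all $c,i$ and all $\bar b_1,\bar b_2\in M_i$ realizing the same complete formula over $G_n(c,i)$, $\phi(c,\bar b_1)\leftrightarrow\phi(c,\bar b_2)$,'' whose countable conjunction over $\phi$ says that $\tp(c/M_i)$ does not split over $G_n(c,i)\in M_i$.

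For the ``$\Leftarrow$'' direction, suppose $N^*\models\Psi^*$ with $|N^*|=\aleph_1$ and $(I,\leq_I)$ of cofinality $\aleph_1$; the reduct $N$ is atomic of size $\aleph_1$ by (a)--(c). Extract a cofinal subchain $(M_\alpha)_{\alpha<\omega_1}$ and fix an arbitrary limit type $p\in S_{at}(N)$; for each $\alpha$, choose $c_\alpha\in N$ realizing $p\mr{M_\alpha}$ and set $\bar a_\alpha:=G_n(c_\alpha,\alpha)\in M_\alpha$, which by (d) witnesses that $p\mr{M_\alpha}$ does not split over $\bar a_\alpha$. On limit $\alpha<\omega_1$, $\bar a_\alpha\subseteq M_\alpha=\bigcup_{\beta<\alpha}M_\beta$, so $\bar a_\alpha\subseteq M_{\gamma(\alpha)}$ for some $\gamma(\alpha)<\alpha$; Fodor's lemma gives a stationary $S\subseteq\omega_1$ on which $\gamma(\alpha)=\gamma^*$, and pigeonhole in the countable $M_{\gamma^*}$ refines to a cofinal $S'\subseteq S$ on which $\bar a_\alpha=\bar a^*$ is constant. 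Taking the union, $p$ itself does not split over $\bar a^*$, so $p$ is constrained.

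For the ``$\Rightarrow$'' direction, given $N\in\At$ of size $\aleph_1$ with every limit type constrained, choose a continuous filtration $(M_\alpha)_{\alpha<\omega_1}$ of $N$ and expand by setting $I:=\omega_1$, $\leq_I$ the ordinal order, $E(\alpha,x):\iff x\in M_\alpha$, and $f_\phi$ by standard Skolem choices. The main obstacle is interpreting $G_n$: for each $c\in N$ and each $\alpha<\omega_1$, we must supply a finite $M_\alpha$-tuple over which $\tp(c/M_\alpha)$ does not split. When $c\in M_\alpha$ the singleton $\langle c\rangle$ works. When $c\notin M_\alpha$, the hypothesis provides a non-splitting witness $\bar a_c\in N$ for $\tp(c/N)$, but \emph{descending} this to an $M_\alpha$-witness when $\bar a_c\notin M_\alpha$ is the nontrivial step. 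I expect to resolve this by enumerating $N=\{c_\beta:\beta<\omega_1\}$ and building the filtration so that, at each stage $\alpha$, $M_\alpha$ contains $\bar a_{c_\beta}$ for every $\beta<\alpha$ (settling the case $c=c_\beta$ with $\beta<\alpha$), and by appealing to atomicity of the countable $M_\alpha$ --- realizing the principal type $\tp(\bar a_{c_\beta}/\emptyset)$ by some $\bar a'_{c_\beta}\in M_\alpha$ --- to obtain substitute $M_\alpha$-witnesses in the remaining cases.
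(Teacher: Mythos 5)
There is a genuine gap, and it sits in the direction where you must expand a given $N$ (with every limit type constrained) to a model of $\Psi^*$. Your clause (d), via the total function $G_n$, asserts that $\tp(c/M_i)$ does not split over a finite subset of $M_i$ for \emph{every} element $c$ of the model and \emph{every} $i\in I$ --- that is, that $(M_i,N)$ is a constrained pair for every $i$. This is strictly stronger than ``every limit type in $S_{at}(N)$ is constrained.'' The hypothesis only controls types over the full uncountable $N$ (whose restrictions are cofinally realized); it says nothing about $\tp(c/M_\alpha)$ when $c\in N\setminus M_\alpha$ and the finite base $\bar a_c$ over which $\tp(c/N)$ does not split is not contained in $M_\alpha$ (Lemma~\ref{dropunconstrained}(1) only descends non-splitting when the base lies in the submodel). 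Your two repairs do not close this: arranging $\bar a_{c_\beta}\subseteq M_\alpha$ for $\beta<\alpha$ handles, for each fixed $c$, only a tail of $\alpha$'s, not all pairs $(c,\alpha)$; and replacing $\bar a_c$ by a tuple $\bar a'\in M_\alpha$ realizing $\tp(\bar a_c/\emptyset)$ gives nothing, because non-splitting is a joint property of $c$ and the base --- conjugating the base while fixing $c$ destroys it. In effect your $\Psi^*$ demands a constrained-pair filtration, i.e.\ condition (2) of Theorem~\ref{omegastbchar}, which the paper shows is a genuinely stronger condition (equivalent to $\omega$-stability under $\aleph_1$-categoricity) and certainly not a consequence of the hypothesis of Theorem~\ref{constrainedexists}. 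So your $(\star)$ fails right-to-left.

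This is why the paper encodes the \emph{negation} of the target property rather than positive witnesses: Lemma~\ref{connect} shows that unconstrained limit types correspond exactly to uncountable chains in a tree-like order $\prec_f$ on type-classes $(N\times I)/E_f$, and $\Psi^*$ asserts chain-freeness by a specializing map $H$ into $(\Q,\le_\Q)$. Even then there is a second obstacle your approach does not confront: the required expansion need not exist outright in $\VV$. The paper produces $H$ only in a c.c.c.\ forcing extension (Lemma~\ref{suslin}) and then pulls the existence of a model of $\Psi^*$ back to $\VV$ via Keisler's absoluteness of model-existence for $L_{\omega_1,\omega}(Q)$. A witness-based sentence like yours, even if it captured the right invariant, would still need an argument of this kind to show the expansion exists; as written, your proof needs both the correct invariant and the absoluteness step.
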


As we will be interested in arbitrary models  of a sentence and because ``is
a well ordering'' is not
 expressible in $L_{\omega_,\omega}(Q)$, we need to generalize the notion of a filtration.

\begin{definition}  {\em  A linear order $(I,\le)$ is {\em $\omega_1$-like} if it
 has cardinality $\aleph_1$, but, letting  $\pred(i)$ denote $\{j\in I:
 j<i\}$, for
 every $i\in I$,
 $|\pred(i)|\leq \aleph_0$.

If $N$ is any set of size $\aleph_1$ and $(I,\le)$ is $\omega_1$-like, then an
{\em $(I,\le)$-scale} is a surjective function $f:N\rightarrow I$ such that $f^{-1}(i)$ is countable for every $i\in I$.

If $f:N\rightarrow I$ is a scale, put $A_i:=f^{-1}(\pred(i))$ for every $i\in
I$, and note that each $A_i$ is countable.

}

\end{definition}

%\sidebar{Is $\pred(i)$ defined for a limit ordinal?}
 We consider the
sets $(A_i:i\in I)$ to be a surrogate for a filtration of $N$;  $A_i$
replaces $M_\alpha$.  We now define a tree order on types over certain
countable subsets of a model of $T$ with cardinality $\aleph_1$.

%\sidebar{jb 3/15/23: $f$ was not used in the following definition.  I hope I
%got the right use, marked  in red.}

\begin{definition}\label{scaledef}{\em Fix $T,N$ as in Proposition~\ref{main}.
 Suppose $(I,\le)$ is an $\omega_1$-like linear order and $f:N\rightarrow I$ is a scale.
\begin{enumerate} \item Define an equivalence relation $E_f$ on $(N\times I)$
 as  $(a,i)E_f(b,j)$ if and only if  $i=j$  and $\tp(a/A_i)=\tp(b/A_i)$.
 Thus each equivalence class corresponds to a type.

\item Define a strict partial order $\prec_f$ on $(N\times I)/E_f$ as:
    $[(a,i)]\prec_f [(b,j)]$ if and only if $i<_I j$;
 $\tp(a/A_i)=\tp(b/A_i)$; and $\tp(b/A_j)$ splits over every finite
    $F\subseteq A_i$.

    \item A {\em $\prec_f$-chain} is a sequence of types linearly ordered
        by $\prec_f$ (hence splitting).
\end{enumerate}}
\end{definition}

It is evident that $((N\times I)/E,\prec_f)$ is {\em tree-like} in that
 the $\prec_f$-predecessors
of every $E_f$-class are linearly ordered by $\prec_f$.  Moreover, since
$(I,\le)$ is $\omega_1$-like, every $E$-class has only countably
 many $\prec_f$-predecessors.

\begin{lemma} \label{connect} Let $N$ be any atomic model of size $\aleph_1$,
 $(I,\le)$ be $\omega_1$-like,
 $f:N\rightarrow I$ be any scale and $I$, $E_f$, $A_i$, and $\prec_f$ be as
in Definition~\ref{scaledef}. The following are equivalent:
 \begin{enumerate} \item there exists an
$f$ such that $\Tscr_f=((N\times I)/E_f,\prec_f)$ has an uncountable
$\prec_f$-chain;
\item Some limit type in $S_{at}(N)$ is unconstrained;
 \item for every $f$,  $\Tscr_f=((N\times I)/E_f,\prec_f)$ has an
     uncountable $\prec_f$-chain.
\end{enumerate}
% Then $N$ is constrained if and only if for there is for some/every
%scale $f$ $\Tscr_f=((N\times I)/E_f,\prec_f)$ has no uncountable
%$\prec_f$-chain.
\end{lemma}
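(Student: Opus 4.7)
The implication $(3) \Rightarrow (1)$ is immediate, so the plan is to prove $(1) \Rightarrow (2)$ and $(2) \Rightarrow (3)$.  A basic fact I will use throughout is that in an $\omega_1$-like linear order $(I,\le)$ every countable subset is bounded (otherwise $I$ would be a countable union of countable initial segments, contradicting $|I|=\aleph_1$), and dually every uncountable subset is cofinal.

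For $(1) \Rightarrow (2)$: given a scale $f$ and an uncountable $\prec_f$-chain $\{[(a_\alpha,i_\alpha)]:\alpha<\omega_1\}$ with $i_\alpha$ strictly increasing in $I$, cofinality of $\{i_\alpha\}$ forces $\bigcup_\alpha A_{i_\alpha}=N$.  I would define $p\in S(N)$ by $p\restriction A_{i_\alpha}:=\tp(a_\alpha/A_{i_\alpha})$, using the second clause of $\prec_f$ for coherence across levels.  A routine check shows $p\in S_{at}(N)$: a realization of $p$ together with any finite tuple $\bar c\in A_{i_\alpha}$ has the same type as $a_\alpha\bar c\in N$, which is atomic.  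For $p$ being a limit type, given countable $M\preceq N$ I would pick $\alpha$ with $M\subseteq A_{i_\alpha}$, and then $a_{\alpha+1}\in N$ realizes $p\restriction M$.  For unconstrainedness, given finite $F\subseteq N$, I pick $\alpha$ with $F\subseteq A_{i_\alpha}$, and the third clause of $\prec_f$ gives that $\tp(a_{\alpha+1}/A_{i_{\alpha+1}})$ splits over $F$, hence so does $p$.

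For $(2) \Rightarrow (3)$: fix an arbitrary scale $f\colon N\to I$ and an unconstrained limit type $p\in S_{at}(N)$.  I recursively construct $(a_\alpha,i_\alpha)$ for $\alpha<\omega_1$ satisfying (a) $i_\alpha$ strictly increasing, (b) $\tp(a_\alpha/A_{i_\alpha})=p\restriction A_{i_\alpha}$, and, crucially, (c) splitting witnesses for $p$ over every finite $F\subseteq A_{i_\beta}$ already sit inside $A_{i_{\beta+1}}$.  At successor stage $\beta+1$: for each of the countably many finite $F\subseteq A_{i_\beta}$, since $p$ splits over $F$ I extract splitting witnesses in $N$ and collect them all into a countable set $A^*\supseteq A_{i_\beta}$; then I pick $i_{\beta+1}>i_\beta$ with $A^*\subseteq A_{i_{\beta+1}}$, which is possible because countable subsets of $I$ are bounded.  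Using downward L\"owenheim--Skolem, take a countable $M\preceq N$ with $A_{i_{\beta+1}}\subseteq M$; the limit-type property hands me a realization $a_{\beta+1}\in N$ of $p\restriction M$.  At limit stages $\alpha$, I take $i_\alpha$ strictly above the bounded countable set $\{i_\beta:\beta<\alpha\}$ and produce $a_\alpha$ in the same way; clause (c) for each $\beta<\alpha$ is inherited from the deposit made at stage $\beta+1$ since $A_{i_{\beta+1}}\subseteq A_{i_\alpha}$.  The $\omega_1$ classes $[(a_\alpha,i_\alpha)]$ are distinct (different $i_\alpha$ coordinates), so together they form the required uncountable $\prec_f$-chain.

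The main subtlety is precisely this threading of the splitting condition through limit stages: without the foresight to deposit splitting witnesses at successor stages, the third clause of Definition~\ref{scaledef}(2) could not be verified at a limit without extra work.  Once the bookkeeping is in place, the verification of each clause of $\prec_f$ reduces to repeated routine application of the unconstrainedness of $p$ (which supplies splitting witnesses over any finite subset) together with its limit-type property (which supplies realizations over any countable elementary submodel).
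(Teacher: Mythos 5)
Your proposal is correct and follows essentially the same route as the paper: $(3)\Rightarrow(1)$ is trivial, $(1)\Rightarrow(2)$ assembles the unconstrained limit type as the union of the types along the chain, and $(2)\Rightarrow(3)$ builds the chain by transfinite recursion using, at each step, the unconstrainedness of $p$ to deposit splitting witnesses in a countable set and the limit-type property (after passing to a countable elementary submodel containing $A_i$) to obtain a realization --- exactly the content of the paper's Claim~3.1.7, with your ``deposit at successors'' bookkeeping a harmless variant of the paper's choice $B=\bigcup\{A_{i_\beta}:\beta<\alpha\}$ at each stage. The only cosmetic point is your assumption in $(1)\Rightarrow(2)$ that the chain comes enumerated in order type $\omega_1$ with increasing $i_\alpha$; since each class has only countably many $\prec_f$-predecessors one can always extract such a cofinal subchain (or, as the paper does, avoid the enumeration entirely by noting that $a$ itself realizes $p\restriction A_i$ whenever $(a,i)\in C$).
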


\begin{proof}  $(3)\Rightarrow(1)$ is immediate.  For $(1)\Rightarrow(2)$,
 suppose for some $f$, $C\subseteq \Tscr_f$ is an uncountable
 $\prec_f$-chain.  As $[(a,i)]\prec_f[(b,j)]$ implies $i<j$ and
 since $(I,\le)$ is $\omega_1$-like,
$\pi_2(C):=\{i\in I:\exists a\in N [(a,i)]\in C\}$ is cofinal in $I$, hence
$\bigcup\{A_i:i\in\pi_2(C)\}=N$. Also, as $[(a,i)]\prec_f[(b,j)]$ implies
$\tp(a/A_i)=\tp(b/A_i)$, there is a unique $p\in S_{at}(N)$ defined as
$p:=\bigcup\{\tp(a/A_i):(a,i)\in C\}$. Furthermore, as $\tp(b/A_j)$ splits
over every finite $F\subseteq A_i$, it follows that $p$ is unconstrained.
Recalling Definition~\ref{defconmod}(2), it remains to show that $p$ is a
limit type. Choose a filtration $\Mbar=(M_\alpha)$ of $N$ and  argue that
$p\mr {M_\alpha}$ is realized in $N$ for every $\alpha\in \omega_1$.  Given
$\alpha\in\omega_1$, choose $i\in\pi_2(C)$ such that $M_\alpha\subseteq A_i$.
Then each $a\in N$ for which $(a,i)\in C$ realizes $p\mr{A_i}$ and hence
realizes $p\mr{M_\alpha}$. So $p$ is a limit type.

$(2)\Rightarrow(3)$.  Suppose $N$ has an unconstrained limit type $p\in S_{at}(N)$ and fix a scale $f$.  Also choose a filtration
$(M_\alpha:\alpha\in \omega_1)$ of $N$.
To construct an uncountable chain $\Tscr_f$ we repeatedly use the following
claim.

\medskip
\begin{claim}\label{cl}
 For every countable $B\subseteq N$ there is $i\in I$ such that
\begin{itemize}
\item  $B\subseteq A_i$;
\item $p\mr{A_i}$ is realized; and
\item  $p\mr{A_i}$ splits over every finite $F\subseteq B$.
\end{itemize}
\end{claim}

\begin{proof}  Given a countable $B\subseteq N$, since $p\in S_{at}(N)$ splits over every finite $F\subseteq N$, there is a countable $B^*\supseteq B$
such that $p\mr{B^*}$ splits over every finite $F\subseteq B$.  Now choose
$i\in I$ such that $B^*\subseteq A_i$ and then choose $\alpha\in \omega_1$
such that $A_i\subseteq M_\alpha$.  Since $p$ is a limit type, choose $c\in
N$ realizing $p\mr{M_\alpha}$ and hence $p\mr{A_i}$.
\end{proof}

Iterating Claim~\ref{cl} $\omega_1$ times yields a strictly increasing
sequence $(i_\alpha:\alpha\in \omega_1)$ from $(I,\le)$ and
$(c_\alpha:\alpha\in\omega_1)$ from $N$, where at each stage $\alpha$, we
take $B=\bigcup\{A_{i_\beta}:\beta<\alpha\}$. It follows directly from the
definition of $\prec_f$ that $(c_\beta,i_\beta)\prec_f(c_\alpha,i_\alpha)$
whenever $\beta<\alpha$, so $((N\times I)/E,\prec_f)$ has an uncountable
chain.
\end{proof}

With  Lemma~\ref{connect} in hand, we now define the sentence $\Psi^*$
described in Proposition~\ref{main}.

\begin{definition}  {\em  Let $L^*:=L\cup\{I,\le_I,f,E,\prec_f\}\cup\{\Q,\le_{\Q},H\}$ and let $\Psi^*$ be a
set of  $L_{\omega_1,\omega}(Q)$-axioms ensuring that for any $N^*\models\Psi^*$:
\begin{enumerate}
\item  The $L$-reduct of $N$ is in $\At$;
\item  $N^*$ is uncountable;
\item  $I\subseteq N$ and $(I,\le_I)$ is an $\omega_1$-like linear order;
\item  $f:N\rightarrow I$ is a scale; (recall: $A_i:=f^{-1}(\pred(i))$);
%each $M_i$ is a countable, elementary substructure of $N$;
\item  $E\subseteq N\times I$ satisfies $(a,i)E(b,j)$ iff $i=j$ and $\tp(a/A_i)=\tp(b/A_i)$;
\item  For all $[(a,i)],[(b,j)]\in (N\times I)/E$, $[(a_i)]\prec_f[(b,j)]$
    iff $i<j$; $\tp(a/A_i)=\tp(b/A_i)$; and $\tp(b/A_j)$ splits over every
    finite $F\subseteq A_i$;
\item  $\Q\subseteq N$ and $(\Q,\le_{\Q})$ is a countable model of DLO;
\item  $H:N\times I\rightarrow \Q$ satisfies: For all $(a,i),(b,j)$,
\begin{enumerate}
\item  If  $(a,i)E(b,j)$ then $H(a,i)= H(b,j)$; and
\item  If $[(a,i)]\prec_f[(b,j)]$, then $H(a,i)<_{\Q} H(b,j)$.
\end{enumerate}
\end{enumerate}
}
\end{definition}

Relying on the general Lemma~\ref{suslin}, we have:

\noindent{\bf Proof of Proposition~\ref{main}}

First, assume $N^*\models \Psi^*$ and let $N$ be the $L$-reduct of $N^*$. As
the ordering on $(\Q,\le)$ forbids a strictly increasing $\omega_1$ sequence,
the existence of the function $H$ forbids $T=((N\times I)/E,\prec_f)$ having
an uncountable $\prec_f$-chain. Thus, by Lemma~\ref{connect},
every limit type in  $S_{at}(N)$ is constrained.

%has an {\color{green} \sout{un}}constrained limit type.

The converse is more involved.  Assume we are given  $N\in\At$
of size $\aleph_1$ with every limit type in $S_{at}(N)$ constrained.
  We can choose arbitrary subsets $I,\Q\subseteq N$ of
cardinality $\aleph_1,\aleph_0$, respectively and choose orderings $\le_I$
and $\le_\Q$ as required by $\Psi^*$.  Fix an arbitrary scale $f:N\rightarrow
N$ and intepret $E$ and $\prec_f$ as required. It only remains to find a
function $H:N\times I\rightarrow\Q$ as requested by $\Psi^*$.  For this, we
turn to forcing, and show in Lemma~\ref{suslin} that since there is no
uncountable $\prec_f$-chain in $N$, such an interpretation of $H$ exists in
$\VV[G]$, a generic extension of $\VV$ by a c.c.c.\ forcing. Note that this
completes the proof of Proposition~\ref{main}. Indeed, we have shown that
there is a model of $\Psi^*$ in some forcing extension. Thus,
$$\VV[G]\models \hbox{`There is a model of $\Psi^*$'}$$
Hence, by Keisler's model existence theorem for sentences of
$L_{\omega_1,\omega}(Q)$, there is a model $M^{**}\models \Psi^*$ in $\VV$.
Although the $L$-reduct of this model may be very different than our initial
$N$, this suffices. $\blacksquare$

\medskip

We prove the remaining Lemma in a very general form replacing $\prec_f$ by a
general tree as in Definition~\ref{gentree}. If one prefers, one could take
the function into $(\Q,\le)$ and be increasing whenever $a\prec b$ holds, but
the formulation below seems more basic.  The form of the sentence $\Psi^*$
involving maps
 into $(\Q,\le)$ is  more like classical definitions of special Aronszajn trees \cite{Sh73}.

\begin{definition} \label{gentree} {\em  By a {\em tree-like strict partial order} $(X,\prec)$ we mean a strict partial order with $|X|=\aleph_1$
such that for every $a\in X$, the induced $(pred(a),\prec)$ is a countable
linear order. }
\end{definition}

\begin{lemma}\label{suslin}  Suppose $(X,\prec)$ is any tree-like strict partial order with no uncountable chain.
Then there is a c.c.c.\ forcing $(\P,\le)$ such that in any generic $\V[G]$ there is a function $H:X\rightarrow\omega$
such that if $a\prec b$, then $H(a)\neq H(b)$.
\end{lemma}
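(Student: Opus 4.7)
\textbf{The plan} is to force with finite $\prec$-specializing partial functions and verify c.c.c.\ via a $\Delta$-system argument exploiting both the linearity of $\pred(\cdot)$ and the no-uncountable-chain hypothesis. Let $\P$ be the set of finite partial functions $p\colon X\to\omega$ with $p(a)\neq p(b)$ whenever $a\prec b$ and $a,b\in\dom p$, ordered by reverse inclusion. Density is immediate: for each $x\in X$ and $p\in\P$ with $x\notin\dom p$, pick $k\in\omega$ outside the finite set of colors already used at elements $\prec$-comparable with $x$ and extend $p$ by $(x,k)$. Hence for any $\V$-generic $G$, the function $H:=\bigcup G$ is total on $X$ and $\prec$-specializing at the condition level.

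The main task is c.c.c. Given $\{p_\alpha:\alpha<\omega_1\}\subseteq\P$, I would apply the $\Delta$-system lemma to $\{\dom p_\alpha\}$, together with finite pigeonhole over the colors appearing in conditions, to extract an uncountable subfamily with $\dom p_\alpha\cap\dom p_\beta=R$ a fixed root, $p_\alpha\mr R$ fixed, $\dom p_\alpha\setminus R$ enumerated as $\{a_\alpha^1,\ldots,a_\alpha^n\}$, and $p_\alpha(a_\alpha^i)=k^i$ constant in $\alpha$. Two conditions are then compatible iff no matching pair $(i,j)$ with $k^i=k^j$ has $a_\alpha^i$ and $a_\beta^j$ $\prec$-comparable, so incompatibility always produces a witness $(i,j,\mathrm{dir})$ from a fixed finite set $\Pi$.

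Next I would exploit countability of $\pred(\cdot)$. Any ``downward'' witness $(i,j,\succ)$ at $(\alpha,\beta)$ --- meaning $a_\beta^j\prec a_\alpha^i$ for $\alpha<\beta$ --- places $a_\beta^j$ in $\pred(a_\alpha^i)$, a countable set; since the $a_\beta^j$ are distinct across $\beta$, each $\alpha$ admits only countably many downward-witnessed $\beta$'s, and recursive thinning produces an uncountable $S$ on which every incompatibility is witnessed upward. Finite pigeonhole then yields a single $(i^*,j^*)$ with $k^{i^*}=k^{j^*}$ such that $W_\alpha:=\{\beta\in S:\beta>\alpha,\ a_\alpha^{i^*}\prec a_\beta^{j^*}\}$ is uncountable for uncountably many $\alpha\in S$; restrict $S$ accordingly. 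Because $\pred(a_\beta^{j^*})$ is linearly ordered, any two elements lying below $a_\beta^{j^*}$ are $\prec$-comparable, so whenever $\beta\in W_{\alpha_1}\cap W_{\alpha_2}$ the elements $a_{\alpha_1}^{i^*}$ and $a_{\alpha_2}^{i^*}$ are comparable.

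From this ``comparability through a common upper bound'' I would extract an uncountable $\prec$-chain in $X$ --- contradicting the lemma's hypothesis --- by a transfinite recursion of length $\omega_1$, aided by a countable elementary submodel $M\prec H_{\aleph_2}$ containing $(S,\{W_\alpha\},X,\prec,\{p_\alpha\})$ and using that each $W_\alpha\in M$ is cofinal in $\omega_1$ by elementarity. The main obstacle is this chain-extraction step: the cleanest organization loops over the finite set $\Pi$, at each stage either producing an uncountable chain (when the current upward type is responsible for uncountably many pairs via the common-upper-bound configuration) or refining $S$ further to eliminate that type (when it witnesses only countably many pairs in the current $S$). Since $\Pi$ is finite, after finitely many rounds the final refinement either contains the chain or admits no remaining incompatibility witness, contradicting the antichain assumption and establishing c.c.c.
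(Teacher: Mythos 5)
Your setup matches the paper's almost exactly: the same finite-condition specializing poset, the same $\Delta$-system reduction to fixed colors and (after handling the root) pairwise disjoint domains, the same observation that incompatibility forces a $\prec$-comparable pair across the two domains, and the same use of countable predecessor sets to orient all witnesses upward. The divergence --- and the gap --- is in the final step. You fix $(i^*,j^*)$ so that $W_\alpha=\{\beta\in S:\beta>\alpha,\ a_\alpha^{i^*}\prec a_\beta^{j^*}\}$ is uncountable for uncountably many $\alpha$, and then derive comparability of $a_{\alpha_1}^{i^*}$ and $a_{\alpha_2}^{i^*}$ only when $W_{\alpha_1}\cap W_{\alpha_2}\neq\emptyset$. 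But two uncountable subsets of $\omega_1$ can be disjoint (indeed $\omega_1$ splits into $\aleph_1$ pairwise disjoint uncountable pieces), so ``uncountable for uncountably many $\alpha$'' gives no control over which pairs $\alpha_1,\alpha_2$ actually share a common upper bound, hence no uncountable chain and no contradiction. The concluding plan --- ``loop over $\Pi$, at each stage either extract a chain or refine $S$'' --- is not a well-defined dichotomy, and the elementary-submodel recursion is only gestured at; as written, the c.c.c.\ proof is incomplete at exactly the point where all the content lives.

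The paper closes this gap with one extra device: fix a \emph{uniform ultrafilter} $\U$ on $\omega_1$ and, for each $\alpha$, choose the witness pair $(p(\alpha),q(\alpha))$ so that the corresponding witness set $S^*_\alpha=\{\beta:a^\alpha_{p(\alpha)}\prec a^\beta_{q(\alpha)}\}$ lies in $\U$. This is possible because $\{\beta:\beta>\alpha\}$ is co-countable, hence in $\U$, it splits into finitely many pieces according to the witness type, and the downward-oriented $\beta$ form a countable, hence $\U$-null, set. A final pigeonhole fixes $(p^*,q^*)$ on a set $S\in\U$. Now any two $S^*_\alpha,S^*_\gamma$ meet, so \emph{every} pair $a^\alpha_{p^*},a^\gamma_{p^*}$ with $\alpha,\gamma\in S$ has a common $\prec$-upper bound $a^\beta_{q^*}$, and tree-likeness makes $\{a^\alpha_{p^*}:\alpha\in S\}$ an uncountable chain outright. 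If you replace ``uncountable'' by ``in $\U$'' throughout your selection of the $W_\alpha$, your argument becomes the paper's and the submodel machinery is unnecessary. (Your direct handling of the $\Delta$-system root, versus the paper's induction on $|\dom(h_\alpha)|$, is an inessential difference.)
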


\begin{proof}  The partial order $(\P,\le)$ is simply the set of all finite
approximations of such an $H$.  That is, $\P$ is the set of all functions
$h:X_0\rightarrow \omega$ with $X_0\subseteq X$ finite such that for all
$a,b\in X_0$, if $a\prec b$, then $h(a)\neq h(b)$, ordered by inclusion,
i.e., ($\sharp$)  $h\le h'$ if and only if $h\subseteq h'$. It is easily
checked that this forcing will produce (in $\V[G]$) a total function
$H:X\rightarrow\omega$ as desired.  The non-trivial part is showing that
$(\P,\le)$ has the c.c.c. For this, choose any uncountable set
$Y=\{h_\alpha:\alpha\in\omega_1\}\subseteq \P$ and assume, by way of
contradiction, that $h_\alpha\cup h_\beta\not\in\P$ for distinct
$\alpha,\beta\in\omega_1$. By passing to a subset of $Y$, we may assume
$|\dom(h_\alpha)|=n$ for some fixed $n\in\omega$ and we argue by
contradiction.  If $n=1$, i.e., $\dom(h_\alpha)=\{a^\alpha\}$, then by
passing to a further subset,
 there is a single $m^*\in\omega$ such that $h_\alpha(a^\alpha)=m^*$ for every $\alpha$.
The only way we could have $h_\alpha\cup h_\beta\not\in\P$ would be if
 $a^\alpha,a^\beta$ were distinct, but $\prec$-comparable.  But then
 $C=\{a^\alpha:\alpha\in\omega_1\}$ would
be an infinite chain in $(X,\prec)$, contradicting our assumption.

So, assume $n>1$ and we have proved   (c.c.c.) for all $n'<n$.  To ease
notation, enumerate the universe $X$ with order type $\omega_1$.  For each
$\alpha$, write $\dom(h_\alpha)=(a^\alpha_1,\dots,a^\alpha_n)$ in increasing
order, subject to this enumeration. By the $\Delta$-system lemma, there is an
uncountable subset and a root $r$ such that
$\dom(h_\alpha)\cap\dom(h_\beta)=r$ for all distinct pairs $\alpha,\beta$. If
$r\neq\emptyset$, we can apply our inductive hypothesis to the family of sets
$\{\dom(h_\alpha)\setminus r:\alpha\in\omega_1\}$, so we may assume
$r=\emptyset$, i.e., the domains $\{\dom(h_\alpha):\alpha\in\omega_1\}$ are
pairwise disjoint.  Again, passing to a subsequence, we may assume that with
respect to the global enumeration of $X$ $a^\alpha_n<a^\beta_1$ for all
$\alpha<\beta$.  Additionally, we may assume there are integers
$(m_1,\dots,m_n)$ such that $h_\alpha(a^\alpha_i)=m_i$ for all
$\alpha\in\omega_1$ and $i\in\{1,\dots,n\}$.

Now fix $\alpha<\beta$.  In order for $h_\alpha\cup h_\beta$ to not be in
$\P$, there must be some $p(\alpha,\beta),q(\alpha,\beta)\in\{1,\dots,n\}$
such that $a^\alpha_{p(\alpha,\beta)}$ and $a^\beta_{q(\alpha,\beta)}$ are
$\prec$-comparable. As a bookkeeping device, fix a uniform\footnote{That is,
every $Y\in\U$ has cardinality $\aleph_1$.  Equivalently,
 $\U$ contains all of the co-countable subsets of $\omega_1$.} ultrafilter $\U$ on $\omega_1$.

Thus, for any $\alpha\in\omega_1$, there is some $S_\alpha\in \U$, some
$p(\alpha), q(\alpha)\in\{1,\dots,n\}$ such that, by ($\sharp$), for every
$\beta\in S_\alpha$, $a^\alpha_{p(\alpha)}$ and $a^\beta_{q(\alpha)}$ are
$\prec$-comparable. However, since $pred(a^\alpha_{p(\alpha)})$ was assumed
to be countable, there is $S^*_\alpha\subseteq S_\alpha$, $S_\alpha^*\in\U$
such that
 $a^\alpha_{p(\alpha)}\prec a^\beta_{q(\alpha)}$ for all $\beta\in S^*_\alpha$.

Similarly, there is some $S\in\U$ and some $p^*,q^*\in\{1,\dots,n\}$ such
that for all $\alpha\in S$ and for all $\beta\in S^*_\alpha$ we have
$a^\alpha_{p^*}\prec a^\beta_{q^*}$. We obtain our contradiction by showing
that $$C=\{a^\alpha_{p^*}:\alpha\in S\}$$ is an uncountable chain in
$(X,\prec)$. Since $\U$ is uniform, $C$ is uncountable.  To get
comparability, choose any $\alpha,\gamma\in S$.  As $S^*_\alpha,S^*_\gamma\in
\U$, there is $\beta\in S^*_\alpha\cap S^*_\gamma$.  It follows that
$a^\alpha_{p^*}\prec a^\beta_{q^*}$ and $a^\gamma_{p^*}\prec a^\beta_{q^*}$.
As $(X,\prec)$ was assumed to be tree-like, it follows that $a^\alpha_{p^*}$
and $a^\gamma_{p^*}$ are $\prec$-comparable.
\end{proof}

\subsection{Extendible and striated formulas}\label{exstri}

Throughout this section, we work with the atomic models of a complete, first
 order theory $T$ in a countable language that has an uncountable atomic model.
 We expound model theoretic properties needed in the forcing construction of
 Section~\ref{theforcing}.

\begin{remark} {\em  In this section we  work  with complete
 formulas $\theta(\wbar)$, usually with a prescribed partition of the free variables.
Regardless of the partition, for any subsequence $\vbar\subseteq \wbar$,
 we use the notation $\theta\mr{\vbar}$ to denote the complete formula in the variables
$\vbar$ that is equivalent to $\exists \ubar \theta(\vbar,\ubar)$ where $\ubar=(\wbar\setminus\vbar)$.
}
\end{remark}

\begin{definition}\label{pclextdef}
{\em
\begin{enumerate}
\item A complete formula $\phi(x,\abar)$ is {\em
    pseudo-algebraic\footnote{The careful distinctions of
    pseudoalgebraicity `in a model' of \cite{BLSmanymod} are avoided
    because we have assumed there is an uncountable atomic model.}} if for
    some/any countable $M$ with $\abar \in M$ and any $N \succneqq M $,
    $\phi(N,\abar) = \phi(M,\abar)$.

\item $b \in \pcl(\abar,M)$, written $b \in \pcl(\abar)$ if and only if
    every $N\preceq M$ with $\abar \in N$, $b \in N$.

\item  A complete formula $\theta(\zbar;\xbar)$ is {\em extendible} if
    there is a pair $M\preceq N$ of countable, atomic models and
    $\bbar\subseteq M$, $\abar\subseteq N\setminus M$ such that
    $N\models\theta(\bbar,\abar)$.
    \end{enumerate}}
\end{definition}

Note that an atomic class has an uncountable model if and only if it has a
non-pseudo-algebraic type.

The definition of an extendible formula depends on the partition of its free variables.
 As we require extendible
 formulas to be complete, they are not
preserved under adjunction of dummy variables.  If $\lg(\xbar)=1$, then $\theta(\zbar,x)$ being extendible is equivalent to it being complete, with
$\theta(\zbar,x)$ not pseudo-algebraic.  Much of the utility of the notion is given by the following fact.

\begin{fact}    \label{onesuffices}
\begin{enumerate}
\item  If $\theta(\zbar;\xbar)$ is extendible, then
 for any countable, atomic $M$ and any $\bbar\in M^{\lg(\zbar)}$ and
 $\abar\in M^{\lg(\xbar)}$ such that $M\models\theta(\bbar,\abar)$,
there is $M_0\preceq M$ such that $\bbar\subseteq M_0$ and $\abar\subseteq M\setminus M_0$.
%\item If $\theta(\zbar,\xbar)$ is a complete formula and $\xbar=\xbar_1\xbar_2$ is a partition, then $\theta(\zbar;\xbar)$ is extendible if and only if
% both $\theta_1(\zbar;\xbar_1):=\exists\xbar_2\theta(\zbar,\xbar_1\xbar_2)$
%and $\theta_2(\zbar;\xbar_2):=\exists\xbar_1\theta(\zbar,\xbar_1\xbar_2)$ are extendible.
\item  If $\theta(\zbar;\xbar)$ is extendible and $\zbar'\subseteq \zbar$ and
$\xbar'\subseteq \xbar$, then the restriction $\theta\mr{\zbar';\xbar'}$ is extendible as well.
\item  Any complete formula $\theta(\zbar;\xbar)$ is extendible if and only
if $\theta\mr{\zbar,x_i}$ is not pseudo-algebraic for every $x_i\in\xbar$.
\end{enumerate}
\end{fact}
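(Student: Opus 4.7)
My plan is to dispatch parts (1) and (2) as straightforward consequences of the uniqueness and homogeneity of countable atomic models, and then to treat part (3) as the substantive content: its forward direction drops out of (2), while its backward direction is the main obstacle.

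For part (1), I begin with an extendibility witness $(M^*, N^*, \bbar^*, \abar^*)$ and the given triple $(M, \bbar, \abar)$ with $M \models \theta(\bbar, \abar)$. Since $\theta$ is a complete formula, $\tp(\bbar\abar) = \tp(\bbar^*\abar^*)$. Both $N^*$ and $M$ are countable atomic models of $T$, so they are isomorphic, and by the homogeneity of countable atomic models I can arrange the isomorphism $\pi : N^* \to M$ to send $\bbar^*\abar^*$ to $\bbar\abar$. Setting $M_0 := \pi(M^*)$ then yields $M_0 \preceq M$ with $\bbar \subseteq M_0$ and $\abar \subseteq M \setminus M_0$. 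For part (2), the same witness $(M^*, N^*)$ works: taking $\bbar'^*$ and $\abar'^*$ to be the sub-tuples of $\bbar^*$ and $\abar^*$ at the coordinates of $\zbar'$ and $\xbar'$, we have $\bbar'^* \in M^*$, $\abar'^* \in N^* \setminus M^*$, and $N^* \models \theta\mr{\zbar',\xbar'}(\bbar'^*, \abar'^*)$, with the remaining coordinates of $\bbar^*\abar^*$ witnessing the existential quantifier built into the restriction. The forward direction of part (3) is then immediate from (2) specialized to $\xbar' = \{x_i\}$: the resulting witness provides $a_i^* \in N^* \setminus M^*$ realizing $\theta\mr{\zbar, x_i}(\bbar^*, x_i)$ with $\bbar^* \in M^*$, which directly contradicts pseudo-algebraicity of $\theta\mr{\zbar, x_i}$.

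The backward direction of (3) is the main obstacle. Assuming each $\theta\mr{\zbar, x_i}$ is not pseudo-algebraic, I fix a countable atomic $M^*$ containing some $\bbar$ realizing $\theta\mr{\zbar}$, and aim to find $\abar = (a_1, \dots, a_n)$ realizing $\theta(\bbar, \xbar)$ with each $a_i \notin M^*$; the countable atomic model generated by $M^* \cup \abar$ then serves as the extension $N^*$. Equivalently, I need the partial type $p(\xbar) := \theta(\bbar, \xbar) \cup \{x_i \neq m : m \in M^*,\ 1 \le i \le n\}$ to be consistent. A naive coordinate-by-coordinate construction is delicate because the hypothesis on $\theta\mr{\zbar, x_j}$ only concerns parameters $\bbar$, not $\bbar$ together with previously chosen $a_i$'s, so non-pseudo-algebraicity need not be preserved along the way. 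Instead, my approach is compactness: if $p$ were inconsistent, there would be a finite $F \subseteq M^*$ such that every realization of $\theta(\bbar, \xbar)$ in the monster has some coordinate $a_i \in F$. The key step will then be a pigeonhole/combinatorial reduction on the finite index set $\{1, \dots, n\}$, combined with the completeness of $\theta$ and the characterization of $\pcl$ in atomic models from \cite{BLSmanymod}, which should force one of the marginals $\theta\mr{\zbar, x_i}(\bbar, x_i)$ to have all its realizations absorbed into $M^*$, contradicting the hypothesis that it is not pseudo-algebraic.
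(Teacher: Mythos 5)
Your parts (1) and (2) and the forward direction of (3) are correct and essentially the paper's own argument: take an extendibility witness and move it onto the given configuration by an isomorphism of countable atomic models, using completeness of $\theta$. The gap is in the backward direction of (3), which you rightly identify as the substantive content but leave as a plan, and the route you sketch has a structural mismatch with the atomic setting. You propose to show $p(\xbar)=\theta(\bbar,\xbar)\cup\{x_i\neq m: m\in M^*\}$ is consistent by compactness and then pass to ``the countable atomic model generated by $M^*\cup\abar$.'' But extendibility and pseudo-algebraicity are defined via realizations in \emph{atomic} elementary extensions, and the relevant type spaces are not compact (the paper notes explicitly that $S_{at}(M)$ is typically not compact). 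So the contrapositive you rely on --- ``if $p$ is inconsistent then some finite $F\subseteq M^*$ absorbs a coordinate of every realization'' --- is not available: $p$ can fail to be realized in any atomic extension of $M^*$ with no finite fragment being inconsistent. Conversely, a monster-model realization of $p$ need not form an atomic set with $M^*$, so even a successful compactness argument would not yield the pair $M^*\preceq N^*$ demanded by the definition. Finally, your pigeonhole step would have to conclude that some marginal $\theta\mr{\zbar,x_i}$ is \emph{pseudo}-algebraic, whereas ``every realization has a coordinate in a finite $F$'' is information about $\acl$, not $\pcl$; since $\pcl$ typically properly contains $\acl$, no contradiction with the hypothesis would follow.

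The paper's proof never leaves the category of countable atomic models and so avoids all of this. It inducts on $\lg(\xbar)$: writing $\theta(\zbar;\xbar,x_n)$, the induction hypothesis makes $\theta\mr{\zbar;\xbar}$ extendible, so part (1) gives $M\preceq N$ with $\bbar\subseteq M$ and $\abar\subseteq N\setminus M$. If $a_n\notin M$ we are done; otherwise $\bbar a_n\subseteq M$, and since $\theta\mr{\zbar,x_n}$ is extendible (the one-variable base case), a second application of (1) \emph{inside} $M$ gives $M_0\preceq M$ with $\bbar\subseteq M_0$ and $a_n\in M\setminus M_0$, while $\abar$ stays outside $M_0$ because $M_0\subseteq M$. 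This two-step use of (1) also neutralizes the difficulty you flagged about non-pseudo-algebraicity not being preserved over enlarged parameter sets. If you want to salvage your approach you would need the $\pcl$-reduction machinery of \cite{BLSmanymod} in place of compactness, at which point the paper's argument is both shorter and self-contained.
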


\begin{proof}  (1)  As $\theta(\zbar;\xbar)$ is extendible, choose countable atomic models $M'\preceq N'$, $\bbar'\subseteq M$ and $\abar'\subseteq N'\setminus M'$
such that $N'\models\theta(\bbar',\abar')$.  As $\theta(\zbar;\xbar)$ is complete, there is an isomorphism $f:N'\rightarrow M$ with $f(\bbar')=\bbar$ and $f(\abar')=\abar$.
Then $M_0:=f(M')$ is as desired.

(2)  This follows easily from the proof of (1).

(3)  Left to right follows easily from (2).  We prove the converse by induction on $\lg(\xbar)$.  For $\lg(\xbar)=1$ this is immediate, so assume this holds when $\lg(\xbar)=n$.
Choose a complete $\theta(\zbar;\xbar, x_n)$ such that $\lg(\xbar)=n$ and $\theta\mr{\zbar,x_i}$ is non-pseudoalgebraic for each $i\le n$.  Choose any countable, atomic $N$ and
$\bbar,\abar,a_n$ from $N$ so that $N\models\theta(\bbar,\abar,a_n)$.  By (1), it suffices to find some $M_0\preceq N$ with $\bbar\subseteq M_0$ and $\abar a_n\subseteq N\setminus M_0$.
To obtain this, since $\exists x_n\theta(\zbar;\xbar,x_n)$ is extendible by (2),  (1) implies there is $M\preceq N$ with $\bbar\subseteq M$
and $\abar\subseteq N\setminus M$.  Thus, if $a_n\in N\setminus M$, we can take $M_0:=M$ and we are done.  If not, then
as $\bbar a_n\subseteq M$ we can apply (1) to $M$ and the extendible $\exists \xbar \theta(\zbar;\xbar,x_n)$ to get $M_0\preceq M$ with $\bbar\subseteq M_0$ and $a_n\in M\setminus M_0$.
\end{proof}

Next, we consider the `transitive closure' of extendibility.

\begin{definition}  {\em  An {\em $n$-striated formula} is a complete formula $\theta(\ybar_0,\dots,\ybar_{n-1})$ whose free variables are partitioned into $n$ pieces such
that, for every $i<n$, letting $\zbar=(\ybar_0,\dots,\ybar_i)$ and $\xbar=(\ybar_i,\dots,\ybar_{n-1})$, we have $\theta(\zbar,\xbar)$ extendible.

A striated formula is an $n$-striated formula for some $n$.

A {\em realization} of an $n$-striated formula $\theta(\ybar_0,\dots,\ybar_{n-1})$
is an $n$-chain $M_0\preceq M_1\preceq M_{n-1}$ of countable, atomic models,
together with tuples $\abar_0,\dots,\abar_{n-1}$ with $\abar_0\subseteq M_0$
and $\abar_i\subseteq M_i\setminus M_{i-1}$ for every $0<i<n$ such that $M_{n-1}\models\theta(\abar_0,\dots,\abar_{n-1})$.
}
\end{definition}

Iterating Fact~\ref{onesuffices}, we see that a partitioned complete formula
$\theta(\ybar_0,\dots,\ybar_{n-1})$ is $n$-striated if and only if for some
countable atomic $M$ and some $(\abar_0,\dots,\abar_{n-1})$ from $M$ with
$M\models\theta(\abar_0,\dots,\abar_{n-1})$, there are $M_0\preceq M_1\preceq
\dots\preceq M_{n-2}\preceq M$ with $\abar_0\subseteq M_0$, $\abar_i\subseteq
M_i\setminus M_{i-1}$ for $0<i<n-2$ and $\abar_{n-1}\cap M_{n-2}=\emptyset$.

Using this characterization, if $\theta(\ybar_0,\dots,\ybar_{n-1})$ is $n$-striated and we modify the partition of $\theta$ by fusing together two adjacent tuples, then the resulting partition yields an
$(n-1)$-striated formula.
Going forward, we have the following amalgamation property for striated formulas.

\begin{lemma}  \label{amalgamation}  Suppose $\alpha(\zbar,\xbar_1,\dots,\xbar_n)$ and $\beta(\zbar,\ybar_1,\dots,\ybar_m)$
are striated and $\alpha\mr{\zbar}$ is equivalent to $\beta\mr{\zbar}$.  Then
there is a striated $\psi(\zbar,\xbar_1,\dots,\xbar_n,\ybar_1,\dots,\ybar_m)$
extending
$\alpha(\zbar,\xbar_1,\dots,\xbar_n)\wedge\beta(\zbar,\ybar_1,\dots,\ybar_m)$.
\end{lemma}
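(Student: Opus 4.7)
The plan is to build a single realization chain of countable atomic models
\[
M_0\preceq M_1\preceq\dots\preceq M_n\preceq P_1\preceq\dots\preceq P_m
\]
together with tuples $\zbar\subseteq M_0$, $\xbar_i\subseteq M_i\setminus M_{i-1}$ for $1\le i\le n$, and $\ybar_j\subseteq P_j\setminus P_{j-1}$ for $1\le j\le m$ (setting $P_0:=M_n$), such that the top model satisfies $\alpha(\zbar,\xbar_1,\dots,\xbar_n)\wedge\beta(\zbar,\ybar_1,\dots,\ybar_m)$. Once this chain is in hand, I would take $\psi$ to be the unique complete formula isolating the type of the combined tuple in $P_m$ (which exists by atomicity). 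The chain itself then witnesses striation of $\psi$ with respect to the partition $(\zbar,\xbar_1,\dots,\xbar_n,\ybar_1,\dots,\ybar_m)$: for any cut, every tuple past the cut lies strictly above the corresponding model in the chain, so the pair (that model, $P_m$) directly realizes the required 2-striation in the sense of Definition~\ref{pclextdef}.

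The first $n$ stages $M_0\preceq\dots\preceq M_n$ are obtained by iterating Fact~\ref{onesuffices} on the striation of $\alpha$, and the base case $P_0=M_n$ already contains $\zbar$ realizing $\beta\mr{\zbar}=\alpha\mr{\zbar}$. For the remaining stages I would argue inductively. Assume $P_{j-1}$ has been built, containing $\zbar,\ybar_1,\dots,\ybar_{j-1}$ realizing $\beta\mr{\zbar,\ybar_1,\dots,\ybar_{j-1}}$. By Fact~\ref{onesuffices}(2) applied to the striation of $\beta$, the 2-striation $(\zbar,\ybar_1,\dots,\ybar_{j-1})/(\ybar_j)$ of the complete formula $\beta\mr{\zbar,\ybar_1,\dots,\ybar_j}$ is extendible, yielding a countable atomic pair $A\preceq B$ with tuples $(\zbar^A,\ybar_1^A,\dots,\ybar_{j-1}^A)\subseteq A$ and $\ybar_j^B\subseteq B\setminus A$ realizing the formula. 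Since both $(\zbar^A,\ybar_1^A,\dots,\ybar_{j-1}^A)$ in $A$ and $(\zbar,\ybar_1,\dots,\ybar_{j-1})$ in $P_{j-1}$ realize the same complete formula, the uniqueness of the countable atomic model together with its $\omega$-homogeneity produces an isomorphism $\tau\colon A\to P_{j-1}$ taking the former tuple to the latter. By the standard amalgamation for countable atomic models over a common elementary substructure, $\tau$ extends to an elementary embedding $\tilde\tau\colon B\to P_j$ for some countable atomic $P_j\succeq P_{j-1}$, and I would set $\ybar_j:=\tilde\tau(\ybar_j^B)$. Elementarity then gives $P_j\models\beta\mr{\zbar,\ybar_1,\dots,\ybar_j}(\zbar,\ybar_1,\dots,\ybar_j)$.

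The step that initially looks the most delicate is arranging $\ybar_j\subseteq P_j\setminus P_{j-1}$, which has the flavor of a disjoint-amalgamation requirement. The resolution, and the key observation that makes this clean, is that an elementary embedding of atomic models is injective: since $\tilde\tau(A)=\tau(A)=P_{j-1}$ and $\ybar_j^B\subseteq B\setminus A$, injectivity forces $\tilde\tau(\ybar_j^B)\cap P_{j-1}=\emptyset$ for free. Once this inductive construction terminates, $P_m$ is a countable atomic model realizing $\alpha\wedge\beta$ on the combined tuple, and the verification that the resulting $\psi$ is striated with the stated partition is immediate from the chain as described in the first paragraph.
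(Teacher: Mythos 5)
Your proof is correct and follows essentially the same route as the paper's: realize $\alpha$ along a chain $M_0\preceq\dots\preceq M_n$ and then stack a realization of $\beta$ on top of $M_n$, using the uniqueness and homogeneity of the countable atomic model to identify the two copies of $\zbar$. The only difference is cosmetic: the paper transplants the entire chain $N_0\preceq\dots\preceq N_m$ realizing $\beta$ in a single isomorphism extension $f^*:N_m\rightarrow M_{n+m}$ over $f:N_0\rightarrow M_n$, whereas you rebuild it level by level via Fact~\ref{onesuffices}(2) and stepwise amalgamation; both constructions produce the same $(n+m+1)$-chain witnessing that $\psi$ is striated.
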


\begin{proof}  Choose an $(n+1)$-chain $M_0\preceq M_1\preceq\dots\preceq M_n$
and $\bbar,\abar_1,\dots,\abar_n$  realizing $\alpha$ (so $\bbar\subseteq M_0$
 and $\abar_i\subseteq M_i\setminus M_{i-1}$ for each $i$) and choose similarly an
  $(m+1)$-chain $N_0\preceq N_1\preceq \dots\preceq N_m$ and $\cbar,\dbar_1,\dots,\dbar_m$ realizing $\beta$.
As $\alpha\mr{\zbar}$ is equivalent to $\beta\mr{\zbar}$, there is an isomorphism $f:N_0\rightarrow M_n$ with $f(\cbar)=\bbar$.  Choose $M_{n+m}\succeq M_n$ for which there is an
isomorphism $f^*:N_m\rightarrow M_{n+m}$ extending $f$.  Now, for $i\le m$ put $M_{n+i}:=f^*(N_i)$.  [Note this is compatible with our previous placements.]  Also,
for each $1\le i\le m$, put $\abar_{n+i}:=f^*(\dbar_i)$.  Finally, put
$\psi(\zbar,\xbar_1,\dots,\xbar_n,\ybar_1,\dots,\ybar_m):=\tp(\bbar,\abar_1,\dots,\abar_{n+m})$.
Then the $(n+m+1)$-chain $M_0\preceq\dots\preceq M_{n+m}$, together with $\bbar,\abar_1,\dots,\abar_{n+m}$ witness that $\psi$ is striated.
\end{proof}

\subsection{The forcing} \label{theforcing} We continue our assumption that we have a fixed complete theory $T$ in a countable language with an uncountable atomic model.
We fix an $\aleph_1$-like dense linear order $(I,\le)$ with least element $0$
and fix a continuous, increasing (necessarily cofinal) sequence
$\<J_\alpha:\alpha\in\omega_1\>$ of initial segments of $I$. Also, fix a set
$X=\{x_{t,m}:t\in I,m\in\omega\}$ of distinct variable symbols and, for each
$\alpha\in\omega_1$, let $X_\alpha=\{x_{t,m}:t\in J_\alpha,m\in\omega\}$. Our
forcing below will describe a complete diagram in the variables $X$
corresponding to an atomic model $N$ of size $\aleph_1$ and the countable
substructures $N_\alpha$ corresponding to the variables $X_\alpha$ will be a
filtration of $N$.

\begin{definition}\label{forcedef}  {\em
 %Fix a sequence of sequence of variable $\ybar_i$ for $i<\omega$
%with length of $\ybar_i$
The forcing $(\P,\le)$ consists of all conditions
$$p=(u_p,\ell(p),\{k_{p,i}:i<\ell(p)\},\theta_p(\ybar_0,\dots,\ybar_{\ell(p)-1)}))$$
satisfying the following properties:
\begin{enumerate}
\item  $u_p$ is a finite subset $\{s_0,\dots,s_{\ell(p)-1}\}\subseteq I$.  We always write the elements of $u_p$ in ascending order.
\item  $\ell(p)=|u_p|$;
\item  If $u_p\neq\emptyset$, then $0\in  u_p$;
\item  Each $k_{p,i}\in\omega$ and denotes $\lg(\ybar_i)$ in $\theta_p$;
\item  $\theta_p(\ybar_0,\dots,\ybar_{\ell(p)-1}))$ is an
    $\ell(p)$-striated formula, where each
    $\ybar_i=(\xbar_{s_i,j}:j<k_{p,i})$ is the initial segment of the
    $s_i$'th row of $X$ of length $k_{p,i}$.
   % \item If $\theta_p\subseteq \theta_q$ the free variables of $\theta_p$
%        are contained in the free variables of $\theta_q$) and
%        $\theta_q\vdash \theta_p$.
\end{enumerate}}
\end{definition}

%\sidebar{jb 3/26/23 4:30: It is left as you wrote. But, I am back to where I
%was before;  it should be: $\ybar_i = \xbar_{\alpha_{i},k{p,i}}$ where
%$\xbar_{\alpha_{i},k_{p,i}} = \langle x_{\alpha_i,j}: j <k_{p,i}\rangle$. But
%simpler to leave $\xbar_{\alpha_{i},k{p,i}}$ out.
%
%
%
%It seems we also need to add that $\alpha_i = s_i$ (or just use $\alpha_i$ in
%1) }

The ordering on $\P$ is natural, i.e., $p\le_{\P} q$ if and only if
 $u_p\subseteq u_q$, the free variables of $\theta_p$
        are contained in the free variables of $\theta_q$  and
        $\theta_q\vdash \theta_p$.

% for each $i<\ell(p)$, if $i^*$ is the $i$'th element of $u_q$,
%  then $k_{p,i}\le k_{q,i^*}$
%(these two properties imply that the free variables of $\theta_p$ are
%contained in the free variables of $\theta_q$) and $\theta_q\vdash \theta_p$.

%
%\sidebar{jb 3/17/23: I don't see the parenthetical remark. Doesn't it have to
%be said explicitly that $\theta_q$ and $\theta_p$ agree on the variable of
%$\theta_p$.}

We remark that the effect of requiring $0\in u_p$ whenever $u_p$ is non-empty is to ensure that if $\theta_p$ entails `$x_{\alpha_i,j}\in\pcl(\emptyset)$', then $\alpha_i=0$.
That is, in the generic model we construct, all pseudo-algebraic complete types of singletons will be contained in $M_0$.

It is easily verified that $(\P,\le)$ is c.c.c.~(See \cite[Claim 4.3.7]{BLS}
for a verification of this in an extremely similar setting.) We record three
additional density conditions about $(\P,\le)$
whose verifications depend on the following fact.

%The following facts are easily verified (see e.g., \cite{BLS} for verifications of these in an extremely similar setting).
%
%\begin{fact}
%\begin{enumerate}
%\item  $(\P,\le)$ is c.c.c.
%\item  The following properties are forced:
%\begin{enumerate}
%\item  The model $N$ described by the complete diagram in the variables $X$ is atomic of size $\aleph_1$.
%\item  For each $\alpha\in\omega_1$, the models $M_\alpha$ described by the complete diagram of $X_{\le\alpha}=\{x_{\beta,n}:\beta\le\alpha, n\in\omega)$ form
%a continuous, elementary chain, i.e., $(M_\alpha:\alpha\in\omega_1)$ is a filtration of $X$.
%\end{enumerate}
%\end{enumerate}
%\end{fact}

%\sidebar{jb 3/26/23: In the end I left 3.3.2 alone. The lemma seems to be
%tailored for the exact use in the next theorem.  I did add the sentence you
%suggested after Def~\ref{pclextdef}.}

\begin{lemma}  \label{nonpseudo}  Suppose $\delta(x)$ is a non-pseudoalgebraic 1-type.
  Then for every countable atomic $N$ and every $\ebar\subseteq N$,
there are $M\preceq N$ and $c\in N\setminus M$ such that $\ebar\subseteq M$ and $N\models \delta(c)$.
\end{lemma}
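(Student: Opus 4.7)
The plan is to reduce to an application of Fact~\ref{onesuffices}(1): I will produce an extendible partitioned formula $\theta(\zbar;x)$ that extends $\tp(\ebar)(\zbar)\wedge\delta(x)$, exhibit a realization $(\ebar,c)$ of $\theta$ inside $N$, and then Fact~\ref{onesuffices}(1) will deliver the desired submodel $M$ automatically.

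To construct $\theta$, I invoke the hypothesis that $\delta(x)$ is non-pseudo-algebraic. By definition, there are countable atomic models $M_0\preceq N_0$ and some $c_0\in N_0\setminus M_0$ with $N_0\models\delta(c_0)$. Since $\tp(\ebar)$ is isolated (as $\ebar$ is a finite tuple in the atomic model $N$) and $M_0$ is atomic, I can pick $\ebar'\in M_0$ realizing $\tp(\ebar)$. Set $\theta(\zbar,x):=\tp^{N_0}(\ebar',c_0)$; by atomicity of $N_0$ this is a single complete formula. By construction $\theta\mr{\zbar}$ is equivalent to $\tp(\ebar)$ and $\theta(\zbar,x)\vdash\delta(x)$, and $\theta$ is extendible with partition $(\zbar;x)$, as witnessed by the pair $M_0\preceq N_0$ together with $\ebar'\subseteq M_0$ and $c_0\in N_0\setminus M_0$.

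Now I transfer back to $N$. Since $\theta(\zbar,x)$ is consistent and $\theta\mr{\zbar}$ is a complete type, $T\vdash\theta(\zbar)\rightarrow\exists x\,\theta(\zbar,x)$; since $N\models\theta(\ebar)$, there is $c\in N$ with $N\models\theta(\ebar,c)$, and in particular $N\models\delta(c)$. Finally, applying Fact~\ref{onesuffices}(1) to the extendible $\theta(\zbar;x)$ together with the countable atomic model $N$ and the realization $(\ebar,c)$, yields $M\preceq N$ with $\ebar\subseteq M$ and $c\in N\setminus M$, which is the desired conclusion. There is essentially no obstacle beyond arranging parameters correctly: atomicity of $M_0$ supplies the needed copy $\ebar'$ of $\ebar$ inside $M_0$, after which the whole lemma is a clean instance of extendibility.
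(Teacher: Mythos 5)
Your proof is correct. It takes a slightly different route from the paper's: the paper argues directly, fixing a witnessing pair $M^*\preceq N^*$ with $c^*\in N^*\setminus M^*$ realizing $\delta$, mapping $N$ isomorphically onto $M^*$ to locate a copy $\ebar^*$ of $\ebar$ inside $M^*$, and then pulling the whole configuration back into $N$ via a second isomorphism $g:N^*\rightarrow N$ with $g(\ebar^*)=\ebar$. You instead package the witnessing configuration as a complete extendible formula $\theta(\zbar;x)$ extending $\tp(\ebar)\wedge\delta(x)$ and invoke Fact~\ref{onesuffices}(1). The two arguments run on the same engine --- the uniqueness and homogeneity of the countable atomic model --- since the proof of Fact~\ref{onesuffices}(1) is exactly the isomorphism transfer the paper performs by hand. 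What your version buys is the explicit observation that Lemma~\ref{nonpseudo} is just the one-variable instance of extendibility; the small extra step you need (and correctly supply) is realizing the isolated type $\tp(\ebar)$ inside the witness model $M_0$, which is automatic because a complete formula consistent with the complete theory $T$ is realized in every model of $T$. All the details check out: $\theta\mr{\zbar}$ isolates $\tp(\ebar)$, so $N\models\exists x\,\theta(\ebar,x)$, and Fact~\ref{onesuffices}(1) applied to the realization $(\ebar,c)$ in $N$ yields the required $M$.
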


\begin{proof}  From the definition of (non)-pseudoalgebraicity, fix countable atomic $M^*\preceq N^*$ and $c^*\in N^*\setminus M^*$ with $N^*\models \delta(c^*)$.
Choose any isomorphism $f:N\rightarrow M^*$ and put $\ebar^*:=f(\ebar)$.  Now, choose an isomorphism $g:N^*\rightarrow N$ with $g(\ebar^*)=\ebar$.
Put $M:=g(M^*)$ and $c:=g(c^*)$.  Then $\ebar\subseteq M$, $c\in N\setminus M$, and $N\models \delta(c)$.
\end{proof}
%
%\sidebar{jb 1/17/23: a) I changed $x_{t,m}$ to $x_{t,m}$ at in statement
%since $n$ was used differently in text and added an explanation below of
%surjective. It seems that my explanation is redundant but I couldn't figure
%out what map was surjective. Maybe we need a different word.}

The forcing is surjective in the sense that for every condition $p$ and every
variable there is an extension of $p$ that includes the variable.

\begin{lemma}[{\bf Surjective}]  For every $p\in \P$ and $x_{t,m}\in X$, there is $q\in\P$, $q\ge p$ with $\xbar_q=\xbar_p\cup\{x_{t,m}\}$.
\end{lemma}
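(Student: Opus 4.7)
My plan is to construct $q$ by extending a realization of $\theta_p$ to include witnesses for $x_{t,m}$ (and any filler variables needed to keep rows as initial segments), then setting $\theta_q$ to be the complete type of the extended tuple. If $x_{t,m}\in\xbar_p$ I take $q:=p$, and if $u_p=\emptyset$ I build a fresh condition by hand (chain $M_0$ alone if $t=0$, else $M_0\prec M_1$ with row $0$ empty so that Definition~\ref{forcedef}(3) is satisfied, placing $m+1$ fresh elements for row $t$). Otherwise, fix a realization $M_0\preceq\cdots\preceq M_{\ell(p)-1}$ of $\theta_p$ with witnesses $\abar_0,\ldots,\abar_{\ell(p)-1}$. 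Because $T$ has an uncountable atomic model, every countable atomic model has a proper countable atomic elementary extension, so by iteratively replacing models I may assume each inclusion $M_{i-1}\prec M_i$ is proper, hence each $M_i\setminus M_{i-1}$ is infinite.

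\textbf{Case 1 (extend an existing row):} If $t=s_i\in u_p$, then $m\geq k_{p,i}$. Append distinct new elements $c_{k_{p,i}},\ldots,c_m$ to $\abar_i$, chosen from $M_0$ if $i=0$ or from $M_i\setminus M_{i-1}$ (disjoint from $\abar_i$) if $i>0$. Set $\theta_q:=\tp(\abar_0,\ldots,\abar_i c_{k_{p,i}}\cdots c_m,\ldots,\abar_{\ell(p)-1})$, $u_q:=u_p$, $k_{q,i}:=m+1$; the same chain witnesses that $\theta_q$ is $\ell(p)$-striated, and $\theta_q\vdash\theta_p$ by restriction.

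\textbf{Case 2 (insert a new row):} If $t\notin u_p$, let $j\geq 1$ be the position of $t$ in the sorted $u_q:=u_p\cup\{t\}$ (since $0\in u_p$ and $t\neq 0$). Put $\zbar:=(\ybar_0,\ldots,\ybar_{j-1})$ and view $\theta_p$ under the coarsened partition $(\zbar;\ybar_j;\ldots;\ybar_{\ell(p)-1})$; any coarsening of a striated partition is readily seen to remain striated. Let $\bbar\subseteq M^-$ realize $\theta_p\mr{\zbar}$ in some countable atomic $M^-$, pick a proper countable atomic extension $M^+\succ M^-$, and extract $m+1$ distinct elements $\cbar=(c_0,\ldots,c_m)\subseteq M^+\setminus M^-$. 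Then $\phi(\zbar,\ybar_{\text{new}}):=\tp(\bbar,\cbar)$ is $2$-striated, witnessed by $M^-\preceq M^+$. Applying Lemma~\ref{amalgamation} with $\phi$ as the first formula and the coarsened $\theta_p$ as the second produces a striated $\psi(\zbar,\ybar_{\text{new}},\ybar_j,\ldots,\ybar_{\ell(p)-1})$ extending both, and I set $\theta_q:=\psi$ with $u_q=u_p\cup\{t\}$ and the obvious values of $k_{q,\cdot}$.

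The main obstacle in Case 2 is to verify that $\theta_q$ is striated under the \emph{fine} partition $(\ybar_0,\ldots,\ybar_{j-1},\ybar_{\text{new}},\ybar_j,\ldots,\ybar_{\ell(p)-1})$, not merely the coarse one arising from the amalgamation. For this I use Fact~\ref{onesuffices}(2) to conclude that $\theta_p\mr{\zbar}$ is itself $j$-striated with the fine partition, yielding a sub-chain $M'_0\preceq\cdots\preceq M'_{j-1}$ that I splice in front of the chain realizing $\psi$ produced by Lemma~\ref{amalgamation} (identifying $M'_{j-1}$ with the countable atomic model in which $\bbar$ is realized, which is legitimate by the uniqueness of the countable atomic model). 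Once the chain is refined in this way, extendibility at every boundary of the fine partition is witnessed directly, all clauses of Definition~\ref{forcedef} are routinely checked, and $q\geq p$ in $\P$ with $x_{t,m}\in\xbar_q$ follows.
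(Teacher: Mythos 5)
Your two-case architecture (lengthen an existing row vs.\ insert a new row, with the insertion handled by Lemma~\ref{amalgamation} followed by a refinement of the bottom block) is a genuinely different route from the paper, which inserts a new row by applying Lemma~\ref{nonpseudo} to split a model $M_j$ of the witnessing chain into $M_j^*\prec M_j$ and relocating the lower chain by an isomorphism fixing $\ebar_0\cdots\ebar_j$. But there is a real gap in your Case 1: the inference ``each inclusion $M_{i-1}\prec M_i$ is proper, hence each $M_i\setminus M_{i-1}$ is infinite'' is false. A proper elementary extension of a countable atomic model can add exactly one point --- take $T$ to be the theory of an infinite pure set, where every infinite subset is an atomic elementary submodel. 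So for a middle row $0<i<\ell(p)-1$ the gap $M_i\setminus M_{i-1}$ of your fixed realization may simply not contain $m+1-k_{p,i}$ fresh elements, and the deficiency cannot be repaired in place: enlarging $M_i$ risks swallowing $\abar_{i+1}$, which must stay outside $M_i$, while shrinking $M_{i-1}$ must respect $M_{i-2}\cup\abar_{i-1}\subseteq M_{i-1}$. What your argument actually needs --- and does not supply --- is a re-realization of $\theta_p$ built bottom-up in which each gap is fattened (say, by taking each new model to be a union of $\omega$ proper extensions) \emph{before} the next row is placed.

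The paper avoids this issue entirely: when lengthening a row it does not insist on distinct new elements but sets every new variable of row $t$ equal to the existing element $e_{j,0}$ of that row, so the original chain witnesses the striation with no cardinality demand on the gaps. In your Case 2 and your $u_p=\emptyset$ case the same worry about $|M^+\setminus M^-|$ arises but is harmless, since there the fresh elements sit in the top gap of the relevant sub-chain, which can be fattened by iterating proper extensions $\omega$ times. Apart from this, your Case 2 is essentially sound; note only that producing the sub-chain $M'_0\preceq\cdots\preceq M'_{j-1}$ \emph{inside} the bottom model of the amalgamated chain is an iterated application of Fact~\ref{onesuffices}(1), not of clause (2), which only gives extendibility of the restrictions.
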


\begin{proof}  We may assume that $p\neq 0$ and that $x_{t,m}\not\in \xbar_p$.  Choose $M_0\preceq M_1\preceq\dots\preceq M_{n-1}$ and $\ebar_0\dots\ebar_{n-1}$
realizing $\theta_p$ (so $\ebar_0\subseteq M_0$, $\ebar_i\subseteq M_i\setminus M_{i-1}$ for $0<i<n$ and $M_{n-1}\models\theta(\ebar_0,\dots,\ebar_{n-1})$.

We first handle the case where $m=0$.  In this case, it must be that $t\not\in u_p$.  Choose $j$ maximal such that $s_j<t$.
%[If $j=n-1$, then as notation, put $M_n:=M_{n-1}$.]
Apply Fact~\ref{nonpseudo} to $M_{j}$ and $\ebar_0\dots\ebar_j$ to get
$M_j^*\preceq M_{j}$ and $c\in M_{j}\setminus M_j^*$ with
$M_{j}\models\delta(c)$ and $\ebar_0\dots,\ebar_{j}\subseteq M_j^*$.  Now let
$f:M_j\rightarrow M_j^*$ be an isomorphism fixing $\ebar_0\dots,\ebar_{j}$
pointwise. Then  the type $\tp(\ebar_0,\dots,\ebar_j,c,\ebar_{j+1},\dots,
\ebar_{n-1})$ and the $(n+1)$-chain $f(M_0)\preceq \dots f(M_{j})\preceq
M_j\preceq M_{j+1}\preceq\dots M_{n-1}$ describes an $(n+1)$-striated formula
$\theta$.  Let $q\in\P$ be the element with $\xbar_q=\xbar_p\cup\{x_{t,0}\}$
with $\theta_q(\xbar_q)$ being the complete formula generating this type.

If $m>0$, then we apply the previous case to ensure that $x_{t,0}\in\xbar_p$.  Say $t=s_j$, the $j$'th element of $u_p$.
But then, given any $\ebar_0,\dots,\ebar_{n-1}$ and $M_0\preceq\dots\preceq M_{n-1}$
realizing $\theta_p$, extend $\xbar_{p,t}$ to include $x_{t,m}$ by making each `new' element of $\ebar_j$ equal to the element $e_{j,0}\in M_j$.
\end{proof}

The notational issue in what follows is the placement of free variables,  For $p\in \P$, there is an explicit ordering to the variables $\xbar_p$ occurring in
$\theta(\xbar_p)$, but when we consider
extensions $\phi(\vbar,\xbar_p)$, we do not want to specify where the $v_i$'s fit in the sequence.

\begin{lemma}[{\bf Henkin}]  Suppose $p\in\P$ and $\theta_p(\xbar_p)\vdash \exists\vbar \phi(\vbar,\xbar_p)$.  Then there is $q\in\P$, $q\ge p$
for which the variables in $(\xbar_q\setminus \xbar_p)$ consist of a realization of $\phi(\vbar,\xbar_p)$ (in some order).  Moreover, if $p\neq 0$, then can be chosen with
$u_q=u_p$.
\end{lemma}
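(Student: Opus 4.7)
The plan is to extend $p$ by enlarging the tuples in each existing row of $u_p$ to include a witness to $\exists\vbar\,\phi(\vbar,\xbar_p)$, so that $u_q = u_p$ (when $p \neq 0$). Setting $n := \ell(p)$, take a realization of the $n$-striated $\theta_p$: a chain $M_0 \preceq M_1 \preceq \cdots \preceq M_{n-1}$ of countable atomic models together with tuples $\ebar_0, \ldots, \ebar_{n-1}$, where $\ebar_0 \subseteq M_0$ and $\ebar_i \subseteq M_i \setminus M_{i-1}$ for $1 \le i < n$, satisfying $M_{n-1} \models \theta_p(\ebar_0, \ldots, \ebar_{n-1})$. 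Because $\theta_p \vdash \exists \vbar\,\phi(\vbar,\xbar_p)$, there exists a tuple $\dbar = (d_0, \ldots, d_{r-1}) \subseteq M_{n-1}$ with $M_{n-1} \models \phi(\dbar, \ebar_0, \ldots, \ebar_{n-1})$.

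The heart of the construction is to distribute the entries of $\dbar$ into the rows of $u_p$ according to the level at which they first appear in the chain. For each $k < r$, let $j(k)$ be the smallest $i < n$ with $d_k \in M_i$, and form $\dbar^{(i)} := (d_k : j(k) = i)$, listed in order of increasing $k$. Then $\dbar^{(0)} \subseteq M_0$ and $\dbar^{(i)} \subseteq M_i \setminus M_{i-1}$ for $1 \le i < n$. Define $\ebar'_i$ as $\ebar_i$ concatenated with $\dbar^{(i)}$; the stratification is preserved, so the chain $M_0 \preceq \cdots \preceq M_{n-1}$ together with the tuples $\ebar'_0, \ldots, \ebar'_{n-1}$ is a realization of the complete type $\theta_q(\ybar_0, \ldots, \ybar_{n-1}) := \tp(\ebar'_0, \ldots, \ebar'_{n-1})$, partitioned so that row $i$ has length $k_{q,i} := k_{p,i} + |\dbar^{(i)}|$. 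In particular, $\theta_q$ is $n$-striated.

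Define $q := (u_p, n, \{k_{q,i} : i < n\}, \theta_q)$, assigning the fresh variables $x_{s_i, k_{p,i}}, \ldots, x_{s_i, k_{q,i}-1}$ component-wise to $\dbar^{(i)}$. Then $q \in \P$, $u_q = u_p$, $\xbar_p \subseteq \xbar_q$, and $\theta_q \vdash \theta_p$ since $\theta_q$ is a complete extension of $\theta_p$, giving $q \ge p$. Reordering the variables in $\xbar_q \setminus \xbar_p$ back into the original order of $\dbar$ shows they realize $\phi(\vbar, \xbar_p)$, as required. The edge case $p = 0$ (where $u_p = \emptyset$) falls outside the ``moreover'' clause; here we instead pick any countable atomic $M$ and a realization $\dbar \subseteq M$ of $\exists \vbar\,\phi(\vbar)$, then apply Fact~\ref{onesuffices} iteratively to produce a chain $M_0 \preceq \cdots \preceq M_{n-1} \preceq M$ and a partition of $\dbar$ into pieces $\dbar^{(i)}$ lying at the correct levels, and proceed analogously with $u_q$ taken to be a finite subset of $I$ containing $0$.

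The only conceptual concern is that the enlarged tuples $\ebar'_i$ continue to furnish a valid realization of a striated formula. This is automatic: each new entry $d_k$ has been placed into the row whose model first contains it, so it sits in precisely the level $M_{j(k)} \setminus M_{j(k)-1}$ (or $M_0$) demanded by the definition of a realization of an $n$-striated formula. Consequently neither the chain $M_0 \preceq \cdots \preceq M_{n-1}$ nor the index set $u_p$ needs alteration, and the entire nontrivial content of the argument is absorbed by the bookkeeping of where each new element lands.
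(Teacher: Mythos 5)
Your proof is correct and follows essentially the same route as the paper's: realize the striation by a chain $M_0\preceq\cdots\preceq M_{n-1}$, find the witnesses in $M_{n-1}$, and append each witness to the row indexed by the least level of the chain containing it, so that the chain and $u_p$ are unchanged and $\theta_q$ is the complete type of the enlarged tuples. The only cosmetic difference is that the paper first reduces by induction to a single new variable (and to $\phi$ complete), whereas you distribute the whole witness tuple in one step; both hinge on the same observation that a $\pcl(\emptyset)$-witness necessarily lands in $M_0$, consistent with $0\in u_p$.
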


\begin{proof}  Arguing by induction, we may assume $\vbar=\{v\}$ is a singleton,
 and we may further assume that $\phi(v,\xbar_p)$ describes a complete type.
 Let $\ebar_0,\dots,\ebar_{n-1}$ and $M_0\preceq\dots\preceq
M_{n-1}$  witness the truth and striation of $\theta_p$ and choose any $b\in
M_{n-1}$ such that
 $M_{n-1}\models\phi(b,\ebar_p)$.
Let $j\le n-1$ be least such that $b\in M_{j}$.  (Note that if
 $\phi(v,\xbar_p)\vdash$ `$v\in\pcl(\emptyset)'$, then we must have $j=0$.)
Let $\xbar_q=\xbar_p\cup\{x_{s_j,k_p(j)}\}$.  Then, letting
$\ebar_j^*=\ebar_j b$, we have a striation
$\ebar_0,\dots,\ebar_{j-1},\ebar_j^*,\ebar_{j+1},\dots,\ebar_{n-1}$ using the
same $n$-chain of models $M_0\preceq\dots M_{n-1}$. Put
$$\theta_q(\xbar_q):=\tp(\ebar_0,\dots,\ebar_{j-1},\ebar_j^*,\ebar_{j+1},\dots,\ebar_{n-1}).$$
 Then $q\in\P$ and $q\ge p$.
\end{proof}

\begin{lemma}  \label{amalg}  Suppose $p,q,r\in\P$ with $p\le q$, $p\le r$, $\xbar_q\cap\xbar_r=\xbar_p$, and for some $t\in I$, $u_q\subseteq I_{<t}$ and $(u_r\setminus u_p)\subseteq I_{>t}$.
Suppose further that there are $M\preceq N$ and $\abar,\bbar,\cbar$ with $\bbar\cap\cbar=\abar$, $\bbar\subseteq M$, and $(\cbar\setminus\abar)\subseteq N\setminus M$
with $N\models\theta_p(\abar)\wedge\theta_q(\bbar)\wedge\theta_r(\cbar)$.
Then there is $r^*\in\P$, $r^*\ge q$, $r^*\ge r$ with $\xbar_{r^*}=\xbar_q\cup\xbar_r$ and $\theta_{r^*}=\tp(\bbar,(\cbar\setminus\abar))$.
\end{lemma}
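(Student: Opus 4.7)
Construct $r^*$ with $u_{r^*} := u_q \cup u_r$, $\xbar_{r^*} := \xbar_q \cup \xbar_r$, and $\theta_{r^*} := \tp(\bbar, \cbar \setminus \abar)$; set $n := \ell(q)$, $m := \ell(r) - \ell(p)$, and $\dbar := \cbar \setminus \abar$. The orderings $r^* \ge q$, $r^* \ge r$ and the variable equality follow formally, since $\theta_{r^*}$ is a complete atomic formula entailing both $\theta_q(\bbar)$ and $\theta_r(\cbar)$. The substantive task is to verify that $\theta_{r^*}$ is $(n+m)$-striated with respect to the partition of $u_{r^*}$ induced by the $I$-ordering: because $u_q \subseteq I_{<t}$ and $u_r \setminus u_p \subseteq I_{>t}$, this ordering lists the $n$ blocks of $\bbar$ first and the $m$ blocks of $\dbar$ afterward.

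I exhibit a realization of $\theta_{r^*}$ in a chain of $n+m$ countable atomic models. By L\"owenheim--Skolem in the pair language (with a predicate naming $M$), I may assume $M$ and $N$ are countable. For the lower portion of the chain, iterating Fact~\ref{onesuffices}(1) inside $M$ for the $n$-striated $\theta_q$ produces countable atomic $L_0 \preceq L_1 \preceq \dots \preceq L_{n-2} \preceq L_{n-1} := M$ with $\bbar_0 \subseteq L_0$ and $\bbar_i \subseteq L_i \setminus L_{i-1}$ for $0 < i < n$. For the upper portion, the hypothesis itself supplies the 2-striation witness for $\theta_r$ at position $\ell(p)$: $M \preceq N$ with $\abar \subseteq M$ and $\dbar \subseteq N \setminus M$. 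Taking $N_{\ell(p)-1} := M$ and iterating Fact~\ref{onesuffices}(1) inside $N$ for the remaining blocks of $\theta_r$ produces countable atomic $M = L_{n-1} \preceq L_n \preceq \dots \preceq L_{n+m-1} \preceq N$ with $\dbar_j \subseteq L_{n+j} \setminus L_{n+j-1}$ for $0 \le j < m$, whose concatenation with the lower chain realizes $\theta_{r^*}$ with the required placement.

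The main obstacle lies in the iterative upper-chain extraction: at each step $j$, one needs $\dbar_{j+1}, \dots, \dbar_{m-1} \notin \pcl(M \cup \{\dbar_0, \dots, \dbar_j\})$ in order to find $L_{n+j} \preceq N$ containing $M \cup \{\dbar_0, \dots, \dbar_j\}$ and avoiding the later $\dbar_k$'s. The $\ell(r)$-striation of $\theta_r$ gives only non-pseudoalgebraicity over $\abar$-based parameters; upgrading to non-pseudoalgebraicity over $M$-based parameters is where the hypothesis is essential, through a transitivity-of-striation argument in the spirit of Lemma~\ref{amalgamation}. Specifically, combining the 2-striation $(\bbar; \dbar)$ supplied by $M \preceq N$ with the finer striation of $\theta_r$, and exploiting $\omega$-homogeneity of countable atomic models to transport $\theta_r$'s upper chain structure so that it sits atop $M$ rather than merely atop $\abar$, yields the required avoidance at each step. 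This delicate coordination between the hypothesis's explicit witness and the intrinsic striation machinery is the technical heart of the proof.
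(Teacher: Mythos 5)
Your setup and the lower half of the chain match the paper's argument, but your proof has a genuine gap exactly where you locate ``the technical heart'': the upper-chain extraction is asserted, not proved. The conclusion requires the \emph{specific} complete formula $\tp(\bbar,\dbar)$ (where $\dbar=\cbar\setminus\abar$) to be striated at every cut among the new rows, i.e., for each $j$ the partitioned formula $\tp(\bbar\dbar_1\dots\dbar_j;\dbar_{j+1},\dots,\dbar_m)$ must be extendible. The hypotheses give only extendibility at the coarse cut $\tp(\bbar;\dbar)$ (via $M\preceq N$) and at the fine cuts over $\abar$ (via $r\in\P$). Neither of your proposed repairs closes this. Lemma~\ref{amalgamation} produces \emph{some} striated common extension of two striated formulas over a common base; it does not show that a complete type already handed to you is striated, which is what is needed here. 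And the ``$\omega$-homogeneity transport'' would require an elementary map fixing $\bbar\dbar$ pointwise while moving the chain witnessing the striation of $\theta_r$ so that its bottom model contains all of $M$; nothing guarantees such a map exists, and indeed $\pcl$ computed over $\bbar\dbar_1\dots\dbar_j$ can be strictly larger than over $\abar\dbar_1\dots\dbar_j$ --- which is precisely the obstruction you name and then do not remove.

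The paper takes a different and shorter route that avoids ever having to split $\dbar$ into rows above $M$: it reduces by induction to the case $u_r=u_p\cup\{s^*\}$ for a single $s^*>t$, so that $\cbar\setminus\abar$ is a single block. In that case the required chain is immediate: realize the striation of $\theta_q$ inside $M$ by a chain $M_0\preceq\dots\preceq M_{n-1}=M$ (Fact~\ref{onesuffices}(1)) and append $N$ on top; the hypothesis $(\cbar\setminus\abar)\subseteq N\setminus M$ witnesses the one new cut directly. If you want to keep your direct, all-rows-at-once approach, you must actually prove extendibility of each intermediate cut, e.g., via Fact~\ref{onesuffices}(3) by showing that no coordinate of a later block lies in $\pcl(\bbar\dbar_1\dots\dbar_j)$; as written, that is exactly the unproved step.
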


\begin{proof}  Arguing by induction, we may additionally assume that $u_r=u_p\cup\{s^*\}$
for some single $s^*>t$.  That is, $\xbar_q\setminus\xbar_p$ lies on a single
level of $X$. Since $q\in\P$, there is a striation of
$\bbar=\bbar_0,\bbar_1,\dots,\bbar_{n-1}$ induced by the rows of $\xbar_q$.
As $\bbar\subseteq M$, we can find an $n$-chain $M_0\preceq M_{n-1}$ of
models with $M_{n-1}=M$, $\bbar_0\subseteq M_0$ and $\bbar_i\subseteq
M_i\subseteq M_{i-1}$ for all $0<i<n$. As $(\cbar\setminus\abar)\subseteq
N\setminus M$ and as $(\xbar_q\setminus \xbar_p)$ consists of a single row
(and since $s^*>t$) it follows that the $(n+1)$-tuple
$\bbar_0,\dots,\bbar_{n-1},(\cbar\setminus\abar)$ is realized in the
$(n+1)$-chain $M_0\preceq\dots\preceq M\preceq N$. Choose
$\xbar_{r^*}=\xbar_q\cup\xbar_r$ and put
$\theta_{r^*}=\tp(\bbar_0,\dots,\bbar_{n-1},(\cbar\setminus\abar))$.  Then
$r^*\in\P$ and both $r^*\ge q$, $r^*\ge r$ hold.
\end{proof}

Armed with these lemmas, we can now prove the main fact about the forcing
$(\P,\le)$ and the generic model $\NN$ of $T$. For forcing notation see
\cite{Kunen}.

\begin{notation}   {\em In what follows, when dealing with $L$-formulas, we will use the letters $\ubar,\vbar,\wbar$, possibly with decorations to denote free variables.
By contrast, tuples denoted by $\xbar,\ybar,\zbar$ denote finite tuples from $X$.  Thus, for example, $\eta(\vbar,\zbar)$ has free variables $\vbar$, and $\zbar$ is a fixed
tuple from $X$.
}
\end{notation}

\begin{remark}  \label{explain}  {\em  In what follows we use the following consequence of splitting inside an atomic model.
Suppose $M\preceq N$ are atomic, $\abar\in M$, $\bbar\in N$, and
$\tp(\bbar/M)$ splits over $\abar$. Then, letting $\theta(\ubar)$ isolate the
complete type of $\abar$ and $\theta'(\wbar,\ubar)$ isolate the complete type
of $\bbar\abar$, there must be a complete formula $\eta(\vbar,\ubar)\vdash
\theta(\ubar)$ and two contradictory complete formulas
$\delta_1(\wbar,\vbar,\ubar)$ and $\delta_2(\wbar,\vbar,\ubar)$, each
extending the (incomplete) formula
$\eta(\vbar,\ubar)\wedge\theta'(\wbar,\ubar)$. }
\end{remark}

\begin{proposition}  \label{forcingresult}  $(\P,\le)$ forces $\NN$ is an atomic model of size $\aleph_1$ with every limit type in $S_{at}(\NN)$ is constrained.
\end{proposition}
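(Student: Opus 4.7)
The proof divides into three parts. For showing $\NN\models T$ is atomic, each condition carries a complete formula $\theta_p(\xbar_p)$, and the Henkin lemma ensures every existential consequence of $\theta_p$ is witnessed in a generic extension, so a standard forcing argument applies. For $|\NN|=\aleph_1$, the Surjective lemma gives $|\NN|\le\aleph_1$; conversely, since $T$ admits an uncountable atomic model there is a non-pseudoalgebraic complete $1$-type $\delta(x)$, and Lemma~\ref{nonpseudo} together with a density argument lets us force each $x_{t,0}^{\NN}$ to satisfy $\delta$ and lie outside the submodel generated by variables at strictly lower levels.

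The main content is the third part: every limit type in $S_{at}(\NN)$ is constrained. Setting $N_\alpha\preceq\NN$ to be the submodel on $X_\alpha$, continuity of $(J_\alpha)$ makes $(N_\alpha)_{\alpha<\omega_1}$ a filtration. Suppose for contradiction that $p_0\in\P$ forces $\dot{q}$ to be an unconstrained limit type. For each $\alpha<\omega_1$ the limit-type hypothesis gives a realization $\dot{a}_\alpha\in\NN$ of $\dot{q}\mr{N_\alpha}$; observe first that $\dot{a}_\alpha\notin N_\alpha$ (otherwise the same element realizes $\dot{q}\mr{N_\beta}$ for all $\beta\ge\alpha$, forcing $\dot{q}$ to be principal and hence constrained), so $\dot{a}_\alpha=x_{t_\alpha,m_\alpha}^{\NN}$ with $t_\alpha\in I\setminus J_\alpha$. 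Since $\dot{q}$ splits over every finite subset of $\NN$, I would apply Remark~\ref{explain} to pick, for each $\alpha$, a splitting witness over some finite $F_\alpha\subseteq\NN$: a complete formula $\eta^\alpha(\vbar,\abar^\alpha)$ and a pair of contradictory complete extensions $\delta_1^\alpha,\delta_2^\alpha$ realized by $\dot{a}_\alpha$ against two $\vbar$-tuples in $\NN$. Then strengthen $p_0$ to $p_\alpha$ capturing all of this data within $u_{p_\alpha}$.

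Next, apply a simultaneous $\Delta$-system and pigeonhole thinning on the countable combinatorial data carried by each $p_\alpha$ (striation shape, the formulas $\theta_{p_\alpha},\eta^\alpha,\delta_1^\alpha,\delta_2^\alpha$, the positions of $t_\alpha$ and the splitting-witness variables within $u_{p_\alpha}$). On a stationary $S\subseteq\omega_1$ these all stabilize to uniform values $\theta,\eta,\delta_1,\delta_2$, the collection $\{u_{p_\alpha}:\alpha\in S\}$ forms a $\Delta$-system with root $r$, and by pigeonhole the witness parameter variables themselves (as elements of $X$) are common across all $\alpha$ and hence lie in $r$, while $t_\alpha\in u_{p_\alpha}\setminus r$. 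Choose $\alpha<\beta$ in $S$ with $\beta$ large enough that $u_{p_\alpha}\subseteq J_\beta$, so that $u_{p_\beta}\setminus r$ lies entirely above $u_{p_\alpha}$ in $I$, and pick $t\in I$ strictly between them using density. The hypotheses of Amalgamation (Lemma~\ref{amalg}) are then verifiable from the uniform striated data, producing the ambient $M\preceq N$ and tuples $\abar,\bbar,\cbar$ via Lemma~\ref{amalgamation}. The resulting $r^*$ simultaneously forces $\dot{a}_\alpha$ and $\dot{a}_\beta$ to realize the same type $\dot{q}\mr{r}$ over the common parameters and to satisfy identical splitting witnesses against shared tuples in $r$; this pins down an inconsistent pair of complete formulas for the same realization, the desired contradiction.

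The main obstacle is the precise combinatorial bookkeeping in the thinning step: one must simultaneously stabilize the striation shape, the formulas involved in the splitting witness, and the positions of $\dot{a}_\alpha$ versus the witness parameters inside $u_{p_\alpha}$, so that the witness parameters land inside the common root $r$ (shared between $p_\alpha$ and $p_\beta$, and so the uniform $\eta,\delta_1,\delta_2$ apply to identical tuples across both conditions) while the realizations $\dot{a}_\alpha,\dot{a}_\beta$ remain at distinct higher levels $t_\alpha<t_\beta$. The density of $(I,\le)$ and the amalgamation of striated formulas (Lemma~\ref{amalgamation}) inside the proof of Lemma~\ref{amalg} are the two key tools for verifying the amalgamation hypothesis and for forcing the final contradiction.
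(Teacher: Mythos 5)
Your first two parts (that the Surjective and Henkin density lemmas make $\NN$ an atomic model of size $\aleph_1$, with the $X_\alpha$'s giving a filtration) match the paper. The overall shape of the third part --- contradiction, $\Delta$-system thinning of conditions $p_\alpha$ deciding realizations of the restricted types, splitting witnesses via Remark~\ref{explain}, amalgamation of striated formulas --- is also the paper's strategy. But there is a genuine gap at the decisive step. To contradict the limit-type property you must produce a \emph{single} condition $q^*$ forcing $\dot a_\alpha$ and $\dot a_\beta$ to satisfy two \emph{contradictory} complete formulas $\delta_1,\delta_2$ over one and the same parameter tuple lying inside $N_\alpha$. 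Your proposal instead arranges that the two realizations ``satisfy identical splitting witnesses against shared tuples in $r$'' --- but identical behaviour over shared parameters yields no contradiction at all; the splitting of $\tp(\dot a_\alpha/\NN_\alpha)$ over the root only says that $\dot a_\alpha$ relates via $\delta_1$ to one $\eta$-tuple and via $\delta_2$ to a \emph{different} $\eta$-tuple, and nothing in your thinning forces $\dot a_\alpha$ and $\dot a_\beta$ to disagree over a common tuple.

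The paper closes this gap with two moves you omit. First, after taking the splitting over the root $\zbar=\xbar_{p_0}$, it uses the Henkin lemma to introduce a \emph{fresh} tuple $\zbar'$ realizing $\eta(\vbar,\zbar)$ at a level inside $J_\alpha$ (the condition $q_0$), and then applies Lemma~\ref{amalg} twice to build $r^1_\alpha\ge q_0,p_\alpha$ forcing $\delta_1(x^*_\alpha,\zbar',\zbar)$ and $r^2_\alpha\ge q_0,p_\alpha$ forcing $\delta_2(x^*_\alpha,\zbar',\zbar)$. Second --- and this is the key missing idea --- it exploits the homogeneity of $(\P,\le)$: the $\Delta$-system uniformities guarantee an automorphism $\sigma$ of the forcing fixing $q_0$ and carrying $p_\alpha$ to $p_\beta$, so $r^2_\beta:=\sigma(r^2_\alpha)$ forces $\delta_2(x^*_\beta,\zbar',\zbar)$ over the \emph{same} $\zbar'\zbar$. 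Only then does Lemma~\ref{amalgamation} (amalgamation of striated formulas over $q_0$) produce $q^*\ge r^1_\alpha,r^2_\beta$ forcing $\delta_1(x^*_\alpha,\zbar',\zbar)\wedge\delta_2(x^*_\beta,\zbar',\zbar)$, which contradicts $\tp(x^*_\alpha/\NN_\alpha)=\tp(x^*_\beta/\NN_\alpha)$. Without the automorphism (or some equivalent symmetry argument) there is no justification for asymmetrically assigning $\delta_1$ to $\alpha$ and $\delta_2$ to $\beta$; a symmetric amalgamation would simply give both realizations the same $\delta_i$. (You also swap the roles of Lemma~\ref{amalg} and Lemma~\ref{amalgamation}: the former attaches the splitting formulas to $x^*_\alpha$, the latter performs the final amalgamation over $q_0$.)
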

\begin{proof}
 It follows from the Surjective (atomic) and Henkin density conditions (model) that $\NN$ is an atomic model of $T$ with size $\aleph_1$.  Moreover, for each $\alpha\in \omega_1$,
$\NN_\alpha$, the substructures $\NN_\alpha$ indexed by the subsets
$X_\alpha$ form a filtration of $\NN$.
% (first paragraph of
%Section~\ref{theforcing}.

Call   a function $b:\omega_1\rightarrow \NN$ a {\em limit sequence} if,
 for all $\alpha\le\beta$,
$\tp(b(\alpha)/\NN_\alpha)=\tp(b(\beta)/\NN_\alpha)$.
%Call a branch $\bb$ {\em unconstrained} if there is a club $C\subseteq\omega_1$
%such that $\tp(\bb(\alpha)/\NN_\alpha)$ is unconstrained for every $\alpha\in C$.
Now, if  $(\P,\le)$ does not force that every limit type is constrained,  then there is some
$p^*\in\P$
and  some $\P$-name $\bb$ and some club $C\subseteq \omega_1$ such that
%such that $
% then for
%some $p^*\in\P$, some $\P$-name $\bb$, and some club $C\subseteq \omega_1$,
$$p^*\forces \bb\ \hbox{is a limit sequence with $\tp(\bb(\alpha)/\NN_\alpha)$ unconstrained for every $\alpha\in C$}$$
(Since $(\P,\le)$ is c.c.c.\ we can find such a club in $\VV$.)

%such that
%$$p^*\forces \bb\ \hbox{is an unconstrained branch with $\tp(\bb(\alpha)/\NN_\alpha)$ unconstrained
%
%\sidebar{jb 3/15/23 1:50: Some negation is missed up here}
 For each
$\alpha\in C$, choose $p_\alpha\in\P$, $p_\alpha\ge p^*$ and $x^*_\alpha\in
X$ such that
$$p_\alpha\forces \bb(\alpha)=x^*_\alpha$$
We will eventually reach a contradiction by finding some $q^*\ge p^*$ and some $\alpha<\beta$ from $C$ such that
$$q^*\forces \tp(x_\alpha^*/N_\alpha)\neq\tp(x_\beta^*/N_\alpha)$$

By a routine $\Delta$-system argument, find a `root' $p_0\in \P$, some $\gamma^*\in\omega_1$, and
a stationary set $S\subseteq C$
 satisfying:
\begin{itemize}
\item $p_0\le p_\alpha$ for all $\alpha\in S$;
\item  $u_{p_0}\subseteq J_{\gamma^*}$;
and
\item for all $\alpha<\beta$ in $S$,
\begin{itemize}
\item $\xbar_{p_\alpha}\cap X_{\gamma^*}=\xbar_{p_0}$;
\item  $\max(u_{p_\alpha})<\min(u_{p_\beta}\setminus u_{p_0})$;
\item  $\lg(p_\alpha)=\lg(p_\beta)$ and $k_{p_\alpha}=k_{p_\beta}$; and
\item  The formulas $\theta_{p_\alpha}$ and $\theta_{p_\beta}$ have the
    same syntactic shape [one formula can be obtained from the other by
    substituting the free variables].
\end{itemize}
\end{itemize}

Note that we do not require $p_0\ge p^*$.   As notation, we write
$\zbar$ for $\xbar_{p_0}$ and note that $\zbar\subseteq X_{\gamma^*}$.
Now fix, for the remainder of the argument,
 some $\alpha<\beta$ from $S$.  To obtain our desired contradiction, we
first concentrate on $p_\alpha$.  Write $\theta_{p_\alpha}(\ybar,\zbar)$ and note that $\ybar$ is disjoint from $X_{\gamma^*}$.
We apply Remark~\ref{explain}, noting that $p_{\alpha}\forces \tp(x_\alpha^*/\NN_\alpha)$ splits over $\zbar$.
Choose a complete formula $\eta(\vbar,\zbar)$ implying $\theta_{p_0}(\zbar)$ and contradictory complete formulas
$\delta_1(x^*_\alpha,\vbar,\zbar)$ and $\delta_2(x^*_\alpha,\vbar,\zbar)$, each extending $\eta(\vbar,\zbar)\wedge\theta^*_{p_\alpha}(x^*_\alpha,\zbar)$,
where $\theta^*_{p_\alpha}$ is the restriction of the compete formula $\theta_{p_\alpha}(\ybar,\zbar)$.

By Henkin, choose $q_0\in \P$, $q_0\ge p_0$ with
$u_{q_0}\subseteq J_\alpha$ and
$\theta_{q_0}(\zbar',\zbar):=\eta(\zbar',\zbar)$.
Next, we use Lemma~\ref{amalg} twice.  In both cases we start with $p_0\le
q_0$ and $p_0\le p_\alpha$.  Our first application gives $r^1_\alpha\in \P$
extending both $q_0$ and $p_\alpha$ with
$\theta_{r_\alpha^1}(\ybar,\zbar',\zbar)\vdash\delta_1(x_\alpha^*,\zbar',\zbar)$.  The second application
gives $r^2_\alpha\in \P$, also extending both $q_0$ and $p_\alpha$ with
$\theta_{r_\alpha^2}(\ybar,\zbar',\zbar)\vdash\delta_2(x_\alpha^*,\zbar',\zbar)$.

Next, we use the fact that the forcing $(\P,\le)$ is highly homogeneous.  Due to
 the similarity of $p_\alpha$ and $p_\beta$ found by the $\Delta$-system argument,
  there is an automorphism $\sigma$ of $(\P,\le)$ sending
$p_\alpha$ to $p_\beta$, fixing $q_0$.  Put $r^2_\beta:=\sigma(r^2_\alpha)$.
 We now apply Lemma~\ref{amalgamation} to $q_0\le r^1_\alpha$ and $q_0\le r^2_\beta$ to get $q^*\in\P$ with $q^*\ge r^1_\alpha$ and $q^*\ge r^2_\beta$.
 However, this is impossible, as
 $$q^*\forces \delta_1(x^*_\alpha,\zbar',\zbar)\wedge \delta_2(x^*_\beta,\zbar',\zbar)$$
 contradicting $p^*\forces \tp(x_\alpha^*/\NN_\alpha)=\tp(x_\beta^*/\NN_\alpha)$ since $\delta_1$ and $\delta_2$ were chosen to be contradictory.
\end{proof}

\subsection{Proof of Theorem~\ref{constrainedexists}}\label{climax}
Theorem~\ref{constrainedexists} follows immediately from the two previous
results and Keisler's model existence result for $L_{\omega_1,\omega}(Q)$. In
particular, by Proposition~\ref{forcingresult}, there is an uncountable atomic   model
with every limit type constrained
in some c.c.c.\ forcing extension $\VV[G]$.  Hence, by
Proposition~\ref{main}, $\VV[G]$ thinks there is a model of $\Psi^*$.  Hence,
by the absoluteness of existence from Keisler's theorem, there is an
uncountable  model of $\Psi^*$ with cofinality $\aleph_1$ in $\VV$. Reversing
the implication in the last sentence of Proposition~\ref{main}, a  second
application gives the existence of such a model in $\V$.

%\bibliography{C:/Users/jbald/texmf/bibtex/bib/local/ssgroups}
%\bibliographystyle{alpha}
\end{document}